\newtheorem{theorem}{Theorem}[section]
\newtheorem{proposition}[theorem]{Proposition}
\newtheorem{lemma}[theorem]{Lemma}
\newtheorem{corollary}[theorem]{Corollary}
\newtheorem{definition}[theorem]{Definition}
\newtheorem{example}[theorem]{Example}
\newtheorem{remark}[theorem]{Remark}
\newtheorem{property}[theorem]{Property}
\newtheorem{conjecture}[theorem]{Conjecture}
\newtheorem{problem}[theorem]{Problem}
\newcommand{\rank}{\operatorname{rank}}
\newcommand{\ord}{\operatorname{ord}}
\newcommand{\inc}{\operatorname{in}}
\newcommand{\supp}{\operatorname{supp}}
\newcommand{\conv}{\operatorname{conv}}
\newcommand{\inter}{\operatorname{int}}
\newenvironment{proof}{\noindent\textsc{Proof.}} {\nolinebreak[4]\hfill$\blacksquare$\\\par}
\begin{document}
\setcounter{footnote}{0}
\renewcommand{\thefootnote}{}
\footnotetext{\textit{Date}: 27.09.2011\\ 2000 \textit{Mathematics Subject Classification:} Primary 32S05\\
\textit{Key words and phrases:} Lojasiewicz exponent, isolated singularity, nondegeneracy in the Kouchnirenko sense, degree of $C^0$ sufficiency. }
\title{The \L ojasiewicz exponent of  nondegenerate\\ surface singularities}
\author{Grzegorz Oleksik \\ Faculty of Mathematics and Computer Science, University of \L \'od\'z\\
Banacha 22, 90-238 \L\'od\'z, Poland\\ oleksig@math.uni.lodz.pl}

\maketitle

\begin{abstract}
In the article we give some estimations of the \L ojasiewicz exponent of nondegenerate surface singularities in terms of their Newton diagrams. We also give an exact formula for the \L ojasiewicz exponent of such singularities in some special cases. The results are stronger than Fukui inequality \cite{F}. It is also a multidimensional generalization of the Lenarcik theorem \cite{L}. 
\end{abstract}

\section{Introduction}
Let $ f:\left( \mathbb {C}^n,0\right)\longrightarrow \left( \mathbb {C},0\right) $ be a holomorphic function
in an open neighborhood of $0\in \mathbb{C}^n$ and  $\sum_{\nu\in\mathbb{N}^n}{a_{\nu}z^{\nu}}$ be the Taylor expansion of $f$ at $0.$
We define ${\Gamma}_{+}(f):=\mbox{conv}\{\nu +{\mathbb{R}^n_+}:a_{\nu}\neq 0\}\subset\mathbb{R}^n$ and call it
\textit{the Newton diagram} of $f$. Let $u\in\mathbb{R}_+^n\setminus\{0\}$.
 Put $l(u,\Gamma_+(f)):=\inf\{\langle u,v\rangle:v\in\Gamma_+(f)\}$ and $\Delta(u,\Gamma_+(f)):=\{v\in\Gamma_+(f):
 \langle u,v\rangle=l(u,\Gamma_+(f))\}.$ We say that $S\subset\mathbb{R}^n$ is a \textit{face} of $\Gamma_+(f)$ if
 $S=\Delta(u,\Gamma_+(f))$ for some $u\in\mathbb{R}_+^n\setminus\{0\}$. The vector $u$ is called 
\textit{the primitive vector} of $S.$ It is easy to see that $S$ is a closed and convex set and $S\subset
 \mbox{Fr}(\Gamma_+(f)),$ where $\mbox{Fr}(A)$ denotes the boundary of $A.$ One can prove that a face $S\subset\Gamma_+(f)$ is compact if and only if all coordinates of its primitive vector $u$ are positive. We call the family of all compact faces of $\Gamma _{+}(f)$ the \textit{Newton boundary} of $f$ and denote by ${\Gamma}(f)$. We denote by $\Gamma^k(f)$ the set of all compact $k$-dimensional faces of $\Gamma(f),\, k=0,\ldots ,n-1.$ For every compact face $S\in{\Gamma}(f)$
we define quasihomogeneous polynomial $f_S:=\sum_{\nu\in S}{a_{\nu}z^{\nu}}$. We say that $f$ is
    \textit{nondegenerate on a face} $S\in\Gamma(f)$ if the system of equations  
     $(f_S)'_{z_1}=\ldots=(f_S)'_{z_n}=0$ has no solution in
      $({\mathbb{C}^*})^n,\,$ where $\mathbb{C}^*=\mathbb{C}\setminus\{0\}$. We say that $f$ is \textit{nondegenerate in the Kouchnirenko sense} (shortly \it nondegenerate \rm) if it is nondegenerate on each face $\Gamma(f).$ We say that $f$ is a \textit{singularity} if $f$ is a nonzero holomorphic function in some open neighborhood of the origin such that $f(0)=0,\,\nabla f(0)=0,$ where $\nabla f=(f'_{z_1},\ldots ,f'_{z_n}).$ We say that $f$ is \textit{an isolated singularity}  if $f$ is a singularity, which has an isolated critical point at the origin i.e. $\nabla f(z)\neq 0$ for
  $z\neq 0.$
\vspace{0.1cm}
\par\indent Let $i\in\{1,\ldots ,n\},\,n\geq 2.$

\begin{definition}

We say that $S\in\Gamma^{n-1}(f)\subset\mathbb{R}^n$ is \textit{an exceptional face with respect to the axis} $OX_i$ if one of its vertices is at distance $1$ to the axis $OX_i$ and another vertices constitute $(n-2)$-dimensional face which lies in one of the coordinate hyperplane including the axis $OX_i.$

\end{definition}
\begin{figure}[!ht]
\begin{center}
\includegraphics{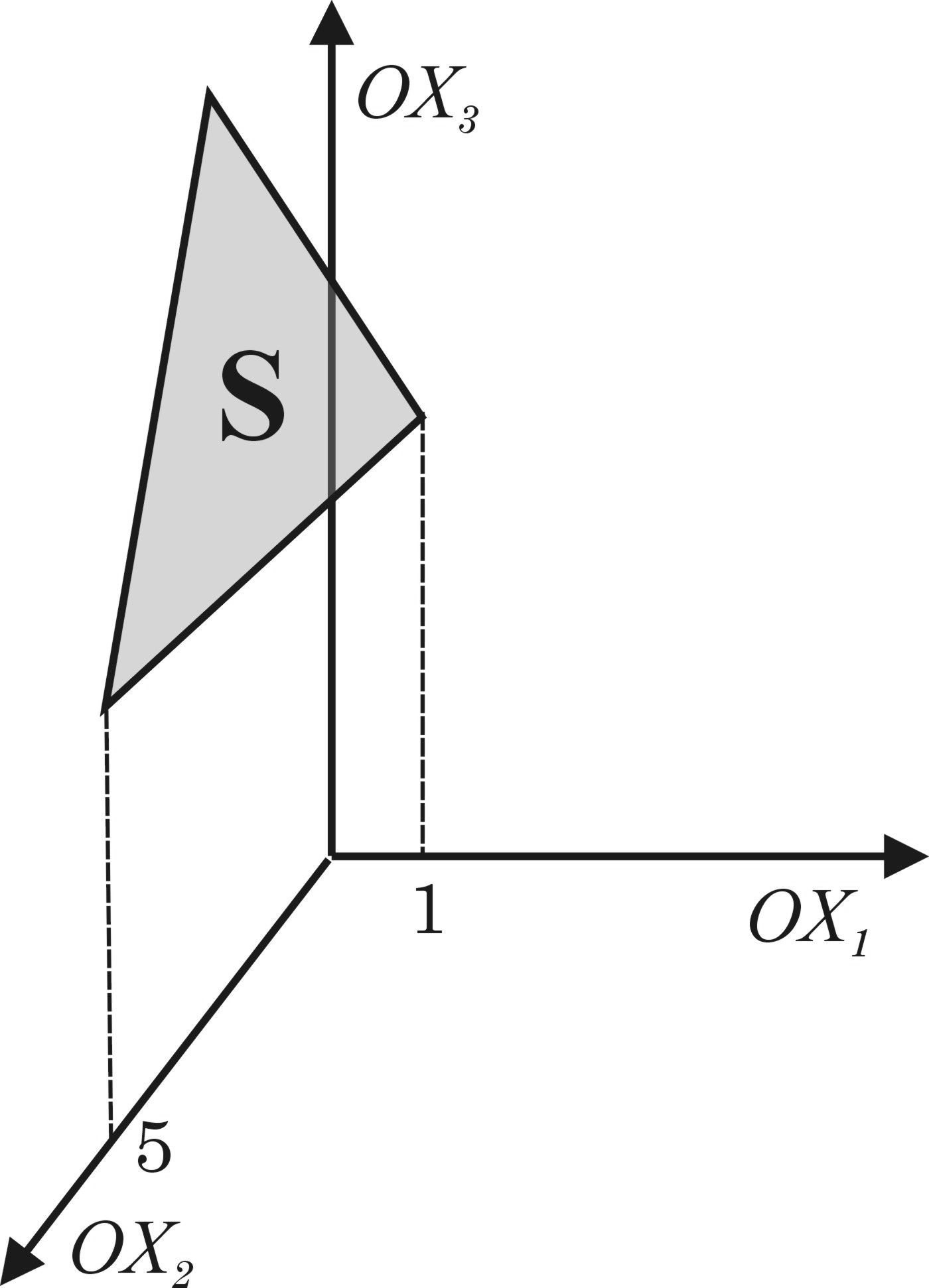}
\caption{An exceptional face $S$ with respect to the axis $OX_3.$}\label{r4.1}
\end{center}
\end{figure}

We say that $S\in\Gamma^{n-1}(f)$ is \textit{an exceptional face of} $f$ if there exists $i\in\{1,\ldots,n\}$ such that $S$ is an exceptional face with respect to the axis $OX_i$. Denote by $E_f$ the set of exceptional faces of $f.$ We call the face $S\in\Gamma^{n-1}(f)$ \textit{unexceptional of} $f$, \rm if $S\not \in E_f.$

\begin{definition} We say that the Newton diagram of $f$ is \textit{convenient} if it has nonempty intersection with every coordinate axis.
\end{definition}


\begin{definition} We say that the Newton diagram of $f$ is \textit{nearly convenient} if its distance to every coordinate axis doesn't exceed 1.
\end{definition}

\par For every $(n-1)$-dimensional compact face $S\in\Gamma(f)$ we shall denote by $x_1(S),\ldots ,x_n(S)$ coordinates of intersection of the hyperplane determined by face $S$ with the coordinate axes. We define $m(S):=\max\{x_1(S),\ldots ,x_n(S)\}.$ It is easy to see that $$x_i(S)=\frac{l(u,\Gamma_+(f))}{u_i},\,i=1,\ldots ,n,$$ where $u$ is a primitive vector of $S.$ It is easy to check that the Newton diagram ${\Gamma}_{+}(f)$ of an isolated singularity $f$ is nearly convenient. So, "nearly convenience"  of the Newton diagram is a neccesary condition for $f$ to be an isolated singularity. For a singularity $f$ such that $\Gamma^{n-1}(f)\neq\emptyset,$ we define  

\begin{equation}m_0(f):=\max _{S\in\Gamma^{n-1}(f)}{m(S)}.\end{equation} 

\noindent It is easy to see that in the case $\Gamma_+(f)$ is convenient $m_0(f)$ is equal to the maximum of coordinates of the points of the intersection of the Newton diagram and the union of all axes.\vspace{0.1cm}


\begin{remark}
A definition of $m_0(f)$ for all singularities ( even for $\Gamma^{n-1}(f)=\emptyset$), can be found in \cite{F}. In the case $\Gamma^{n-1}(f)\neq\emptyset$ both definitions are equivalent.
 \end{remark}

\par Let $f=(f_1,\ldots ,f_n):(\mathbb{C}^n,0)\longrightarrow \left( \mathbb {C}^n,0\right)$ be a holomorphic mapping
having an isolated zero at the origin. We define the number

\begin{equation}l_0(f):=\inf\{\alpha\in\mathbb{R}_+\colon \exists _{C > 0}\exists _{r>0}\forall _{\|z\|<r}\|f(z)\|\geq C\|z\|^{\alpha}
\} \end{equation}

\noindent and call it \textit{the \L ojasiewicz exponent} of the mapping $f.$ There are  formulas and estimations of the number $l_0(f)$ under some nondegeneracy conditions of $f$ (see \cite{A}, \cite{B}, \cite{BE2}, \cite{L}, \cite{O}, \cite{Ph}).
\par Let  $f:(\mathbb{C}^n,0)\longrightarrow \left( \mathbb {C},0\right)$ be an isolated singularity.
We define a number
${\pounds}_0(f):=l_0(\nabla f)$ and call it \textit{the \L ojasiewicz exponent of singularity} $f.$
 Now we give some important known properities of the \L ojasiewicz exponent (see \cite{L-JT}): 
\begin{itemize}
\item[\rm(a)]  ${\pounds}_0(f)$ is a rational number.
\item[\rm(b)]\label{b} ${\pounds}_0(f)=\mbox{sup}\{\frac{\ord \nabla f(z(t))}{\ord z(t)}\colon 0\neq z(t)\in\mathbb{C}
\{t\}^n,\, z(0)=0\}.$
\item[\rm(c)] The infimum in the definition of the \L ojasiewicz exponent  is attained for $\alpha ={\pounds}_0(f)$.
\item[\rm(d)] $s(f)=[{\pounds}_0(f)]+1,$ where $s(f)$ is \textit{ the degree of $C^0$-sufficiency of $f$ } (see \cite{ChL}).
\end{itemize}

\par Lenarcik gave in \cite{L} the formula for the \L ojasiewicz exponent for singularities of two variables, nondegenerate in Kouchnirenko sense, in terms of its Newton diagram (another formulas in two-dimensional case see \cite{CK1}, \cite{CK2}).
  

\begin{theorem}[{\rm[L]}]Let $f:(\mathbb{C}^2,0)\longrightarrow \left( \mathbb {C},0\right)$ be an isolated nondegenerate singularity and $\Gamma^1(f)\setminus E_f\neq\emptyset$. Then \begin{equation}{\pounds}_0(f)=\max_{S\in\Gamma^1(f)\setminus E_f}m(S)-1.
\end{equation}
\end{theorem}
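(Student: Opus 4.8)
The plan is to evaluate ${\pounds}_0(f)=l_0(\nabla f)$ through its description as a supremum over analytic arcs (property (b)) and to convert the orders of the partial derivatives $f'_{z_1},f'_{z_2}$ along arcs into the combinatorics of $\Gamma(f)$. The basic dictionary is that $\Gamma_+(f'_{z_i})$ differs from the translate $\Gamma_+(f)-e_i$ (here $e_i$ is the $i$th unit vector) only in monomials lying on coordinate hyperplanes, so that for a monomial arc $z(t)=(c_1t^{a_1},c_2t^{a_2})$ with weight $a=(a_1,a_2)$ and $c_1,c_2\neq0$ one has $\ord f(z(t))=l(a,\Gamma_+(f))$ and $\ord f'_{z_i}(z(t))\geq l(a,\Gamma_+(f))-a_i$, with equality exactly when the face $\Delta(a,\Gamma_+(f))$ is not annihilated by $\partial_{z_i}$ at $(c_1,c_2)$. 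The first reduction I would make is the standard observation that the supremum in (b) is attained along a branch of one of the curves $\{f'_{z_1}=0\}$ or $\{f'_{z_2}=0\}$: if along some arc the minimum of the two orders is realized by $f'_{z_1}$, then steering the arc onto a branch of $\{f'_{z_1}=0\}$ only increases that minimum. Since $f$ is an isolated singularity these two curves meet only at the origin, so each has finitely many branches and
\[
{\pounds}_0(f)=\max\left(\max_{\gamma\subset\{f'_{z_1}=0\}}\frac{\ord f'_{z_2}(\gamma)}{\ord\gamma},\ \max_{\gamma\subset\{f'_{z_2}=0\}}\frac{\ord f'_{z_1}(\gamma)}{\ord\gamma}\right),
\]
the inner maxima running over the (Puiseux-parametrized) branches with $\gamma(0)=0$.

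The engine of both inequalities is then a single order computation. Given a branch $\gamma$, say of $\{f'_{z_2}=0\}$, its leading weight determines an edge $S=\Delta(a,\Gamma_+(f))$ of $\Gamma(f)$, with primitive vector $u=(u_1,u_2)$ and $l=l(u,\Gamma_+(f))$. Nondegeneracy of $f$ on $S$ forces the leading coefficient of the branch to be a torus zero of $(f_S)'_{z_2}$, hence not a zero of $(f_S)'_{z_1}$; therefore $\ord\gamma=\min(u_1,u_2)$ and $\ord f'_{z_1}(\gamma)=l-u_1$, so that
\[
\frac{\ord f'_{z_1}(\gamma)}{\ord\gamma}=\frac{l-u_1}{\min(u_1,u_2)}\le\frac{l-\min(u_1,u_2)}{\min(u_1,u_2)}=\frac{l}{\min(u_1,u_2)}-1=m(S)-1,
\]
and symmetrically for branches of $\{f'_{z_1}=0\}$. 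This already gives the upper bound ${\pounds}_0(f)\le\max_S m(S)-1$ as soon as one knows that every contributing edge is unexceptional.

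For the lower bound I would take $S^{*}\in\Gamma^1(f)\setminus E_f$ realizing $\max_{S}m(S)$ and, after permuting the variables, assume $m(S^{*})=x_1(S^{*})=l/u_1$ with $u_1\le u_2$. If $S^{*}$ does not meet the axis $OX_1$, both its vertices have positive second coordinate, so $(f_{S^{*}})'_{z_2}$ is a genuine binomial; nondegeneracy then produces a branch of $\{f'_{z_2}=0\}$ governed by $S^{*}$, and the computation above becomes an equality, giving the ratio $m(S^{*})-1$. If instead $S^{*}$ meets $OX_1$, then $(f_{S^{*}})'_{z_2}$ is a monomial and the relevant branch is the axis $\{z_2=0\}$ itself; here the hypothesis $S^{*}\notin E_f$ forces the opposite vertex to lie at distance at least $2$, and together with near-convenience and isolatedness this rules out a low height-one monomial, so that $\ord f'_{z_1}$ along the axis equals $x_1(S^{*})-1=m(S^{*})-1$.

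The step I expect to be the main obstacle is the precise role of $E_f$ in the upper bound, namely showing that a branch governed by an exceptional edge $S$ cannot exceed $\max_{S\in\Gamma^1(f)\setminus E_f}m(S)-1$, so that exceptional edges may be discarded from the maximum. The mechanism is visible in the estimate of the second paragraph: when $S$ is exceptional, a vertex at distance $1$ from an axis makes the partial derivative toward that axis a pure power with no torus zero, so the only branch $S$ governs comes from the other side and realizes not $m(S)-1$ but the strictly smaller value $\bigl(l-\max(u_1,u_2)\bigr)/\min(u_1,u_2)$. Turning this into the required global inequality — that this smaller exceptional contribution is always dominated by the value $m-1$ of some unexceptional neighbouring edge — is where isolatedness and the classification of exceptional faces must be combined with a combinatorial comparison of adjacent edges of $\Gamma(f)$, and it is the heart of the argument as well as the place where the hypothesis $\Gamma^1(f)\setminus E_f\neq\emptyset$ is genuinely used.
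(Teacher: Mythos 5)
The paper itself offers no proof of this statement (it is quoted from Lenarcik [L]), so your proposal must stand on its own, and it does not: by your own account the decisive step --- showing that branches governed by exceptional edges never exceed $\max_{S\in\Gamma^1(f)\setminus E_f}m(S)-1$, which is the only thing separating this theorem from Fukui's inequality --- is deferred as ``the main obstacle''. Modulo the (citable) reduction of $\pounds_0(f)$ to polar branches, your second paragraph at best yields the bound over \emph{all} edges, and your upper bound never treats polar branches lying inside a coordinate axis, which is exactly where exceptional edges matter. So even read charitably, what is on the page is Fukui's estimate plus a partial lower bound, not the stated equality.

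Moreover, two steps you do carry out are false, for one common reason: near the $OX_1$-axis, $\Gamma_+(f'_{z_2})$ is \emph{not} the translate $\Gamma_+(f)-1_2$, because height-one monomials $z_1^iz_2$ lying strictly above $\Gamma(f)$ survive differentiation and create faces of $\Gamma(f'_{z_2})$ invisible in $\Gamma(f)$. Take $f=z_2^3+z_1^5z_2+z_1^7$: it is isolated and nondegenerate, and $\Gamma(f)$ is the single unexceptional edge $S^*=\overline{(0,3)(7,0)}$, so the theorem asserts $\pounds_0(f)=m(S^*)-1=6$. (i) The polar curve $\{f'_{z_2}=0\}=\{3z_2^2+z_1^5=0\}$ consists of the single branch $\gamma(t)=(t^2,ct^5)$, $3c^2=-1$, whose weight $(2,5)$ exposes the \emph{vertex} $(7,0)$ of $\Gamma(f)$, not an edge; there $(f_S)'_{z_2}\equiv 0$, so the premise of your ``engine'' (leading coefficients are a torus zero of $(f_S)'_{z_2}$ on an edge, nondegeneracy applies) breaks down. (ii) Your lower-bound Case 2 fails on this $f$: $S^*$ meets $OX_1$ and $(f_{S^*})'_{z_2}=3z_2^2$ is a monomial, yet the axis $\{z_2=0\}$ is \emph{not} a branch of $\{f'_{z_2}=0\}$ (indeed $f'_{z_2}(t,0)=t^5\not\equiv 0$), and along the axis the ratio in property (b) is only $5<6$; the value $6$ is attained solely on the torus branch $\gamma$, which your case analysis never produces. (The premise of Case 2 is also wrong in general: for $f=z_2^2+z_1^5z_2+z_1^{10}$ the edge meets $OX_1$ but $(f_{S^*})'_{z_2}=2z_2+z_1^5$ is not a monomial, because of the height-one lattice point $(5,1)$ interior to the edge.) Repairing all of this requires precisely the analysis of $\Gamma(f'_{z_1}),\Gamma(f'_{z_2})$ near the axes that you postponed; that analysis is the actual content of Lenarcik's proof, and its three-dimensional counterpart in this paper is the combination of Properties \ref{n8} and \ref{n7}, Lemmas \ref{n10} and \ref{n6}, with the Lemma about the choice of an unexceptional face.
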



\begin{remark} In two-dimensional case one can prove that for isolated singularities such that $\Gamma^1(f)\setminus E_f=\emptyset ,$
 i.e. $\Gamma^1(f)$ consist of only exceptional segments, we have ${\pounds}_0(f)=1$. 
\end{remark}

\par In multidimensional case we have only an upper bounds for ${\pounds}_0(f),$ which was given by T. Fukui in 1991 (without removing any faces).


\begin{theorem}[{\rm[F]}]\label{fukui} Let $f:(\mathbb{C}^n,0)\longrightarrow \left( \mathbb {C},0\right)$ be an isolated nondegenerate singularity. Then \begin{equation}{\pounds}_0(f)\leq m_0(f)-1.\end{equation} 
\end{theorem}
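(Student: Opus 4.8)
The plan is to reduce everything, via the arc characterization (b), to a single inequality for individual analytic arcs. Since $\pounds_0(f)=\sup\{\ord\nabla f(z(t))/\ord z(t)\}$, it suffices to prove that for every $0\neq z(t)\in\mathbb{C}\{t\}^n$ with $z(0)=0$ one has $\ord\nabla f(z(t))\le (m_0-1)\ord z(t)$. First I would treat an arc all of whose components are $\not\equiv 0$: write $z_j(t)=c_jt^{a_j}+\cdots$ with $c_j\neq 0$, put $a=(a_1,\dots,a_n)\in\mathbb{R}_{>0}^n$, so that $\ord z(t)=d:=\min_j a_j$. The lowest-order behaviour of $f$ along the arc is then governed by the compact face $S:=\Delta(a,\Gamma_+(f))$ and by $l:=l(a,\Gamma_+(f))$, while the leading-coefficient vector $c=(c_1,\dots,c_n)$ lies in $(\mathbb{C}^*)^n$.

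The core local computation I would use is that, for each $j$, the contribution of $S$ to $\partial_j f(z(t))$ has order $l-a_j$ and coefficient $(f_S)'_{z_j}(c)$, so $\ord\partial_j f(z(t))=l-a_j$ as soon as $(f_S)'_{z_j}(c)\neq 0$. Nondegeneracy on $S$ says the system $(f_S)'_{z_1}=\cdots=(f_S)'_{z_n}=0$ has no zero in $(\mathbb{C}^*)^n$, hence $(f_S)'_{z_{j_0}}(c)\neq 0$ for some index $j_0$, giving
\[
\ord\nabla f(z(t))=\min_j\ord\partial_j f(z(t))\le l-a_{j_0}.
\]
The task is thereby reduced to producing a valid index $j_0$ (one with $(f_S)'_{z_{j_0}}(c)\neq 0$) for which the purely geometric estimate $l-a_{j_0}\le(m_0-1)d$ holds.

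When $\Gamma_+(f)$ is convenient this estimate is clean and works for any valid $j_0$. Letting $k$ satisfy $a_k=d$, convenience forces $m_0\,e_k\in\Gamma_+(f)$ (the diagram meets the axis $OX_k$ at height $\le m_0$, and $\Gamma_+(f)$ is closed under adding $\mathbb{R}^n_+$); hence $l=\min_{\nu\in\Gamma_+(f)}\langle a,\nu\rangle\le\langle a,m_0e_k\rangle=m_0 d$, and since $a_{j_0}\ge d$ we get $l-a_{j_0}\le m_0 d-d=(m_0-1)d$. The bound $l\le m_0 d$ can also be seen intrinsically by decomposing $a$ along the primitive normals $u'$ of the facets containing $S$ and invoking $m(S')\le m_0$, i.e. $l(u',\Gamma_+(f))\le m_0\,u'_k$, for each compact facet $S'$.

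The step I expect to be the main obstacle is exactly the non-convenient case together with the correct choice of $j_0$: when $\Gamma_+(f)$ misses the axis $OX_k$ the inequality $l\le m_0 d$ genuinely fails (a non-compact facet with $u'_k=0$ contributes to $l$ but not to $a_k$), and one is forced to select the valid index of \emph{maximal} order $a_{j_0}$ and to exploit that, $f$ being an isolated singularity, $\Gamma_+(f)$ is nearly convenient, i.e. lies within distance $1$ of every axis; this is what controls the excess coming from the non-compact facets and restores $l-a_{j_0}\le(m_0-1)d$. Finally, an arc with some $z_i\equiv 0$ I would handle by restricting to the coordinate subspace $H$ it lies in: nondegeneracy is inherited by $f|_H$, one has $m_0(f|_H)\le m_0(f)$, and $\ord\nabla f(z(t))\le\ord\nabla(f|_H)(z(t))$, so the claim follows by induction on $n$. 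Taking the supremum over all arcs then yields ${\pounds}_0(f)\le m_0(f)-1$.
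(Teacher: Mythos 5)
A preliminary remark: the paper itself does not prove this statement; it quotes it from \cite{F}, and the paper's own Theorem \ref{n2} is a $3$-dimensional sharpening of it, proved by exactly the arc strategy you propose (Lemma \ref{n10}, Corollary \ref{n12}, Lemma \ref{n6}). Your skeleton is correct as far as it goes: the reduction to arcs via property (b), the identity $\ord f'_{z_j}(z(t))=l-a_j$ for valid indices, the existence of a valid index from nondegeneracy, and the convenient case are all fine. But the step you yourself flag as ``the main obstacle'' is never actually carried out, and it is precisely where the content of the theorem lies. What you need is that for every arc $l-\max_{j\in J}a_j\le(m_0(f)-1)\min_j a_j$, where $J=\{j:(f_S)'_{z_j}(c)\ne 0\}$; nondegeneracy only guarantees $J\ne\emptyset$, it does not let you decide \emph{which} indices are valid. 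The bound that comes for free from the supporting hyperplane $L\perp a$ (this is the paper's Lemma \ref{n6}) is $m(L)-1$, and $m(L)$ is in general \emph{not} bounded by $m_0(f)$: for $f=z_1^2z_2+z_2^m$ (isolated, nondegenerate, non-convenient, $m_0(f)=m$) and $a=(1,N)$, the face $\Delta(a,\Gamma_+(f))$ is the vertex $(2,1)$ and $m(L)=N+2\to\infty$. In this example the needed inequality still holds only because the large-order variable $z_2$ happens to divide $f_S$, so the index $2$ is valid; proving that something like this always happens is the real work. No soft appeal to near-convenience does it: this is the bulk of Fukui's proof in \cite{F}, and in the paper's sharper $3$-dimensional setting it occupies all of Sections 3 and 4 (Lemma \ref{wybor} and the construction of the auxiliary plane $K\parallel L$ supporting $\Gamma_+(f'_{z_3})$ at one point).

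There is a second gap, which you do not flag: the treatment of arcs with some $z_i\equiv 0$ by restriction and induction on $n$ fails. For an isolated nondegenerate $f$, the restriction $f|_H$ to a coordinate subspace need not be an isolated singularity, so your induction hypothesis does not apply; worse, $\nabla(f|_H)$ can vanish identically along the arc, making the inequality $\ord\nabla f(z(t))\le\ord\nabla(f|_H)(z(t))$ vacuous. Concretely, $f=z_2^2+z_1^2z_3+z_1z_3^2$ is isolated and nondegenerate, yet $f|_{\{z_3=0\}}=z_2^2$ has non-isolated critical locus, and $f$ vanishes identically on the $z_1$-axis; along $\phi(t)=(t,0,0)$ the only nonzero component of the gradient is the transverse one, $f'_{z_3}\circ\phi=t^2$, which realizes the bound $m_0(f)-1=2$ but is invisible to any argument about $f|_H$. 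Your auxiliary claim $m_0(f|_H)\le m_0(f)$ is also unproved and not obvious, since facets of $\Gamma(f|_H)$ are low-dimensional faces of $\Gamma(f)$ that need not be contained in any compact facet of $\Gamma(f)$. This is exactly why case b) of the paper's proof of Theorem \ref{n2} works with the transverse derivatives $f'_{z_j}(0,z_2,z_3)$ rather than with $\nabla(f|_H)$ alone. In sum: correct skeleton and correct easy case, but both hard cases --- non-convenient arcs and arcs in coordinate subspaces --- are missing, and together they are the theorem.
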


\par In the paper we improve the Fukui inequality and simultaneously generalize the Lenarcik result (in a weak form) to $3$-dimensional case (by removing exceptional faces). 

We denote by $\overline{AB}$ the segment joining two different points $A,B\in\mathbb{R}^n.$ We consider following segments in $\mathbb{R}^3$: $$I_1^k=\overline{(0,1,1)(k,0,0)},\,I_2^k=\overline{(1,0,1)(0,k,0)},\,I_3^k=\overline{(1,1,0)(0,0,k)},\,\,k\in\{2,3 \ldots \}.$$ Put $\mathcal {J}:=\{I_j^k\colon j=1,2,3,\,k=2,3,\ldots\}.$ Every segment $I$ of this family intersects exactly one coordinate axis in exactly one point. We denote by $m(I)$ nonzero coordinate of this point (equal to $k$). We give now the main result, which is the improvement of the above Theorem \ref{fukui}. 


\begin{theorem}
\label{n2} Let $f:\left( \mathbb {C}^3,0\right)\longrightarrow \left( \mathbb {C},0\right)$ be an isolated and nondegenerate singularity.  
\vspace{0.15cm}

\par\noindent $1^0$ If $\Gamma^{2}(f)=\emptyset$ or $\Gamma^{2}(f)=E_f,$ then there exists excatly one segment $I\in\mathcal{J}\cap\Gamma^{1}(f)$ and $${\pounds}_0(f)=m(I)-1.$$
\par \noindent $2^0$ If $\Gamma^2(f)\setminus E_f\neq\emptyset,$ then \begin{equation}\label{n2'} {\pounds}_0(f)\leq\max _{S\in\Gamma^{2}(f)
 \setminus E_f} m(S)-1. \end{equation} 
\end{theorem}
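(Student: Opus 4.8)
The plan is to work throughout with the arc description (b), writing $\pounds_0(f)=\sup_{z(t)}\ord\nabla f(z(t))/\ord z(t)$, and to read the two orders off the Newton diagram. First I would encode an arc $z(t)=(z_1(t),z_2(t),z_3(t))$ by its vector of orders $p=(\ord z_1,\ord z_2,\ord z_3)\in(\mathbb{N}\cup\{\infty\})^3$ together with its leading coefficients $a\in(\mathbb{C}^*)^3$ (some $p_j=\infty$ when $z_j\equiv0$). When all $z_j\not\equiv0$ and $a$ is generic the arc sees the face $S=\Delta(p,\Gamma_+(f))$: each monomial $\nu_j a_\nu z^{\nu-e_j}$ of $f'_{z_j}$ has order $\langle p,\nu\rangle-p_j$, so $\ord f'_{z_j}(z(t))\ge l(p,\Gamma_+(f))-p_j$, with equality whenever $(f_S)'_{z_j}\not\equiv0$ and its leading coefficient along the arc does not vanish. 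Nondegeneracy of $f$ on $S$ forces the $(f_S)'_{z_j}$ to have no common zero in $(\mathbb{C}^*)^3$, so for generic $a$ at least one of these orders is attained; minimizing over $j$ and dividing by $\ord z(t)=\min_j p_j$ yields, after normalizing $p$ to the primitive normal $u$ of $S$, the per-face bound $\ord\nabla f(z(t))/\ord z(t)\le m(S)-1$, which is the local form of Theorem~\ref{fukui}. If $S\notin E_f$ this already gives the inequality of $2^0$, and the whole difficulty is thus concentrated in the arcs that see an exceptional face and in the degenerate arcs.

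Second, I would show that an exceptional face does not control the exponent through its intercept, which is what lets one delete it from the maximum. Say $S$ is exceptional with respect to $OX_3$, with distance-one vertex $(0,1,b)$ and remaining vertices on an edge reaching the axis point $(0,0,m(S))$ with $m(S)=x_3(S)$. The vertex $(0,1,b)$ puts a monomial $z_2z_3^{\,b}$ in $f$, hence a term $z_3^{\,b}$ in $f'_{z_2}$; along the arcs concentrated towards $OX_3$ (the only ones whose ratio could approach $m(S)-1$) this forces $\ord f'_{z_2}(z(t))\le b$, so the gradient order is capped by $b$ rather than by $m(S)-1$. To convert this into the bound of $2^0$ I would push such an arc into the plane $\{x_1=0\}$ that contains both $(0,1,b)$ and the axis $OX_3$: for $z_1\equiv0$ one has $\ord\nabla f(z(t))\le\ord\nabla g(z(t))$ with $g:=f|_{\{x_1=0\}}$, since the extra component $f'_{z_1}|_{\{x_1=0\}}$ only lowers the minimal order, whence the ratio is at most $\pounds_0(g)$. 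The trace of $S$ in this plane is the two-dimensionally exceptional segment $\overline{(1,b)(0,m(S))}$, which the Lenarcik theorem discards, so $\pounds_0(g)=\max_{\sigma\in\Gamma^1(g)\setminus E_g}m(\sigma)-1$ is carried by the genuine planar segments. The main obstacle is the final matching step: one must check that each non-exceptional segment $\sigma$ of $g$ is the trace of some $S'\in\Gamma^2(f)\setminus E_f$ with $m(\sigma)\le m(S')$, and that arbitrary arcs seeing $S$ (not only those in $\{x_1=0\}$) are controlled by a limiting argument; this is exactly where the hypothesis $\Gamma^2(f)\setminus E_f\neq\emptyset$ enters, and where one must also ensure the planar restriction stays nondegenerate (or replace Lenarcik by a direct order estimate).

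Finally, for $1^0$ I would first establish the combinatorial fact that a nearly convenient diagram of an isolated singularity which has no two-dimensional compact face outside $E_f$ (the case $\Gamma^2(f)=\emptyset$ or $\Gamma^2(f)=E_f$) carries exactly one segment $I\in\mathcal{J}$: near-convenience forces $\Gamma_+(f)$ to come within distance $1$ of each axis, the isolatedness (Kouchnirenko) conditions then pin a single monomial bridge of the form $I_j^k$ to the axis it meets, and a second such bridge would span a genuine non-exceptional two-face, contradicting the hypothesis. Granting this, the upper bound $\pounds_0(f)\le m(I)-1$ follows from the machinery of the first two paragraphs with every estimate now funnelled through $I$, while the reverse inequality is obtained from an explicit arc on the axis met by $I$: if $I=I_3^k$ meets $OX_3$ at $(0,0,k)$, take $z(t)=(t^{N},t^{N},t)$ with $N$ large, so that $\ord z(t)=1$, the monomial $z_3^{\,k}$ gives $\ord f'_{z_3}(z(t))=k-1$, and the monomial $z_1z_2$ forces $\ord f'_{z_1}(z(t)),\ord f'_{z_2}(z(t))\ge N$; since no monomial of $f$ lies below the relevant faces, $\ord\nabla f(z(t))=k-1$ and the ratio equals $k-1=m(I)-1$. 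The analogous axis arcs handle $I_1^k$ and $I_2^k$, and comparing the two inequalities gives the exact value asserted in $1^0$.
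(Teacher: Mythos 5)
Your proposal fails at the two places where the paper has to work hardest, and both failures are concrete. First, the lower-bound arc in part $1^0$ is wrong. Along $z(t)=(t^N,t^N,t)$ it is \emph{not} true that the monomial $z_1z_2$ forces $\ord f'_{z_1}(z(t))\geq N$, because $f'_{z_1}$ may contain monomials in which neither $z_1$ nor $z_2$ appears. Take $f=z_1z_2+z_3^5+z_1z_3$: it is an isolated nondegenerate singularity, $\Gamma^2(f)=E_f$ consists of the single face $\conv\{(1,1,0),(0,0,5),(1,0,1)\}$ (exceptional with respect to $OX_1$), and $I=I_3^5$, so the theorem asserts $\pounds_0(f)=4$; but along your arc $f'_{z_1}=z_2+z_3=t^N+t$ has order $1$, so the ratio you produce is $1$, not $4$. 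The extremal ratio is reached only after tuning coefficients so that lower-order terms cancel (e.g.\ $z(t)=(-5t^4,-t,t)$ gives $\nabla f\circ z=(0,-5t^4,0)$), so no monomial arc with generic coefficients can prove the lower bound. The paper does not use arcs here at all: the vertex $(1,1,0)$ forces $f''_{z_1z_2}(0)\neq 0$ while $f''_{z_1z_1}(0)=0$ or $f''_{z_2z_2}(0)=0$, hence $\rank$ of the Hessian is at least $2$, so $\pounds_0(f)=\mu_0(f)$ by P\l oski's result (Lemma \ref{milnor}), and $\mu_0(f)$ is then computed by Kouchnirenko's theorem, after a deformation $f+\alpha_1z_1^v+\alpha_2z_2^v$ making the diagram convenient without changing $\pounds_0$. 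Nothing in your sketch replaces this.

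Second, in part $2^0$ you misplace the difficulty. The estimate your first paragraph yields for an arc with order vector $p$ is $\ord\nabla f(z(t))/\ord z(t)\leq m(L)-1$, where $L$ is the supporting plane with normal $p$ (this is the paper's Lemma \ref{n6}); it equals $m(S)-1$ only when $S=\Delta(p,\Gamma_+(f))$ is two-dimensional. When the arc sees a vertex or an edge, $m(L)$ is not controlled by $\max_{S'\in\Gamma^2(f)\setminus E_f}m(S')$: in the paper's Example \ref{n42}, the arc $(t^M,t^M,t)$ sees the vertex $(0,0,20)$ and Lemma \ref{n6} only gives the useless bound $19$, while the theorem requires $5$. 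So the problematic arcs are not just ``those seeing an exceptional face plus degenerate arcs''; they include all arcs whose supporting plane pivots about a low-dimensional face. This is exactly what the paper's Section 3 (Lemma \ref{wybor}, the choice of an unexceptional face) together with the parallel-plane construction ($K\parallel L$ through $W-1_3$, supporting $\Gamma_+(f'_{z_3})$ in the single point $W-1_3$ with $m(K)\leq m(S_1)-1$) is built to handle, and your proposal contains no substitute. The two steps you flag but leave open --- the ``matching step'' identifying unexceptional segments of $g=f|_{\{z_1=0\}}$ with traces of unexceptional $2$-faces of $f$, and the ``limiting argument'' pushing arbitrary arcs into a coordinate plane --- are precisely this missing core, not technicalities; and your appeal to Lenarcik's theorem for $g$ is additionally unjustified because $g$ need not be an \emph{isolated} singularity even when $f$ is (the paper instead applies Lemma \ref{n6} to $g$, which only needs nondegeneracy).
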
 

In the paper \cite{KOP} there was given formula for the \L ojasiewicz exponent of quasihomogeneous surface singularities in terms of their weights. 

\begin{theorem} \label{Q1} Let $f:\left( \mathbb {C}^3,0\right)\longrightarrow \left( \mathbb {C},0\right)$ be an isolated weighted homogeneous singularity with weights $w_1,w_2,w_3,$ then $${\pounds}_0(f)=\max_{i=1}^3 w_i-1$$
\end{theorem}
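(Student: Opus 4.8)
The plan is to prove the two inequalities separately, deriving the upper bound from the Fukui estimate and the lower bound from the arc description of the \L ojasiewicz exponent.

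First I would record the elementary geometry of the Newton diagram. Since $f$ is weighted homogeneous with weights $w_1,w_2,w_3$, all its monomials lie on the single affine hyperplane $\{\nu:\sum_i\nu_i/w_i=1\}$, whose primitive vector $(1/w_1,1/w_2,1/w_3)$ has positive coordinates; hence $\Gamma(f)$ consists of one compact $2$-dimensional face $S$ with $x_i(S)=w_i$, so that $m_0(f)=\max_i w_i$. Moreover $f_S=f$, so nondegeneracy of $f$ on $S$ means exactly that $\nabla f$ has no zero in $(\mathbb{C}^*)^3$, which is part of the isolated singularity hypothesis $(\nabla f)^{-1}(0)=\{0\}$. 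Thus $f$ is nondegenerate and Theorem \ref{fukui} applies verbatim, giving
$$\pounds_0(f)\le m_0(f)-1=\max_{i=1}^3 w_i-1.$$
This settles the upper bound with no further work, and in particular it does not require $S$ to be unexceptional.

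For the lower bound I would use property (b), namely $\pounds_0(f)=\sup\{\ord\nabla f(z(t))/\ord z(t)\}$ over analytic arcs $z(t)\to 0$. Assume $w_1=\max_i w_i$, equivalently that the weight $\beta_1:=1/w_1$ of $z_1$ is minimal. The aim is to produce one arc with $\ord\nabla f(z(t))=(w_1-1)\,\ord z(t)$. Writing $z_i(t)=\xi_i t^{a_i}+\cdots$ and using weighted homogeneity, each monomial of $f$ has the same weighted degree, so along the orbit direction $a_i=\beta_i/\beta_1\ge 1$ every monomial of $f$ acquires order $w_1$; consequently $\partial_1 f$ already has the desired order $w_1-1$, while $\partial_2 f$ and $\partial_3 f$ have the smaller orders $w_1-a_2$ and $w_1-a_3$, so that $\min_i\ord\partial_i f=w_1-\max_i a_i<w_1-1$ unless all weights coincide. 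The correct arc must therefore be chosen so that the partials in the directions of larger weight vanish to \emph{higher} order than their naive weighted order.

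Hence the crux — and the step I expect to be the main obstacle — is to construct this arc in the non-convenient case, where $f$ may contain mixed monomials such as $z_1^a z_2$ that force $\partial_2 f$ to have too small an order along the $z_1$-axis. I would exploit cancellation: keep $a_1=1$ and $\xi_1\neq 0$ (so that $\ord z(t)=a_1$) and tune the exponents $a_2,a_3$ together with the leading coefficients $\xi_2,\xi_3$ so that the leading forms of the offending partials cancel, raising their orders to at least $(w_1-1)\,\ord z(t)$. The isolated-singularity hypothesis is precisely what guarantees that such a choice exists without making all three partials vanish simultaneously along a one-parameter family, which would contradict $(\nabla f)^{-1}(0)=\{0\}$. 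Organizing this as a finite weighted-order (tropical) optimization over $\supp f$, and verifying that the optimum equals $w_1-1$ in every configuration — either by a direct case analysis on the shape of $\Gamma(f)$ or by appealing to the classification of weighted homogeneous surface singularities — completes the lower bound. Combining it with the Fukui estimate yields $\pounds_0(f)=\max_i w_i-1$.
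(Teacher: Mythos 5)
Your upper bound rests on the claim that, because $f_S=f$ for the top face $S$, isolatedness of the singularity makes $f$ nondegenerate in the Kouchnirenko sense, so that Theorem \ref{fukui} ``applies verbatim''. This is false: nondegeneracy requires that $\nabla (f_T)$ have no zero in $(\mathbb{C}^*)^3$ for \emph{every} compact face $T\in\Gamma(f)$, and for a proper face (an edge or vertex of $S$) the truncation $f_T$ is not $f$, so its nondegeneracy does not follow from $\nabla f(z)\neq 0$ for $z\neq 0$. Concretely, $f=(z_1+z_2)^2+z_1z_3+z_3^2$ is a nondegenerate quadratic form, hence an isolated singularity, and it is homogeneous, i.e.\ weighted homogeneous with weights $(2,2,2)$; yet on the edge $T=\overline{(2,0,0)(0,2,0)}$ of its Newton boundary one has $f_T=(z_1+z_2)^2$, whose gradient vanishes at $(1,-1,1)\in(\mathbb{C}^*)^3$. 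So this $f$ is degenerate in the Kouchnirenko sense and Fukui's inequality cannot be invoked for it (the theorem you are proving of course still holds there: $\pounds_0(f)=1$). Your geometric setup also fails at an earlier point: $\Gamma(f)$ need not contain any $2$-dimensional face. For the isolated weighted homogeneous singularity $f=z_2^3+z_1z_3$ the support spans only a segment; then the weight system is not even unique (both $(2,3,2)$ and $(4,3,4/3)$ are admissible, with different values of $\max_i w_i$), so the identification $m_0(f)=\max_i w_i$, and with it your first step, breaks down. This degenerate-dimension case is exactly why the paper's own reformulation, Theorem \ref{n3}, splits into the cases $\Gamma^2(f)=\emptyset$, exceptional, and unexceptional.

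The lower bound is not proved at all. You correctly compute that the naive orbit-direction arc only yields the ratio $w_1-w_1/\min_j w_j$, which is strictly less than $w_1-1$ unless all weights coincide, and you then describe the repair --- tune the exponents and leading coefficients so that the leading forms of the offending partials cancel, and ``verify that the optimum equals $w_1-1$ in every configuration'' by an unspecified case analysis or by ``appealing to the classification'' --- without carrying any of it out. That construction is precisely the hard content of the result, and the assertion that the isolated-singularity hypothesis ``is precisely what guarantees that such a choice exists'' is a hope, not an argument. Note also that the present paper does not prove Theorem \ref{Q1} either: it quotes it from \cite{KOP}, where it is the main result of an entire article, proved by a substantially different and longer argument that does not go through Kouchnirenko nondegeneracy and Fukui's bound (it cannot, by the counterexample above). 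As written, neither inequality is established.
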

(in real case see \cite{HP}). Since in $3$-dimensional case the weights are topological invariants of quasihomogeneous singularities (see \cite{Y}), then from the above formula we get that the \L ojasiewicz exponent is a topological invariant of such singularities.
We can reformulate this result in terms of the Newton diagram as follows.

\begin{theorem}
\label{n3} Let $f:\left( \mathbb {C}^3,0\right)\longrightarrow \left( \mathbb {C},0\right)$ be an
isolated weighted homogeneous singularity.  
\vspace{0.15cm}

\par\noindent $1^0$ If $\Gamma^{2}(f)=\emptyset$ or $\Gamma^{2}(f)$ consist of one exceptional face, then there exists exactly one segment $I\in\mathcal{J}\cap\Gamma^{1}(f)$ and $${\pounds}_0(f)=m(I)-1.$$
\par \noindent $2^0$ If $\Gamma^2(f)$ consists of one unexceptional face $S,$ then \begin{equation*} {\pounds}_0(f)=m(S)-1. \end{equation*} 
\end{theorem}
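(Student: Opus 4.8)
The plan is to treat Theorem~\ref{n3} as the translation of Theorem~\ref{Q1} into the language of Newton diagrams, borrowing case $1^0$ and the upper bound in case $2^0$ directly from Theorem~\ref{n2}. The starting observation is that if $f$ is weighted homogeneous with weights $w_1,w_2,w_3$, then every monomial of $f$ lies on the single hyperplane $H\colon \sum_{i}\nu_i/w_i=1$, so $\supp(f)\subset H$ and the entire Newton boundary lies on $H$. Hence the ``roof'' of $\Gamma_+(f)$ is planar and $\Gamma^2(f)$ is either empty or a single compact $2$-face $S$ (the compact part of $H\cap\Gamma_+(f)$); this is exactly the dichotomy splitting the statement into $1^0$ and $2^0$. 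Since $H$ meets the axis $OX_i$ at the point whose $i$-th coordinate is $w_i$, I get $x_i(S)=w_i$ whenever $S$ spans $H$, and in particular $m(S)=\max_i x_i(S)=\max_i w_i$.

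For case $1^0$ I would just invoke Theorem~\ref{n2}. The hypothesis ``$\Gamma^2(f)=\emptyset$ or $\Gamma^2(f)$ is one exceptional face'' is precisely the hypothesis ``$\Gamma^2(f)=\emptyset$ or $\Gamma^2(f)=E_f$'' of Theorem~\ref{n2}$\,1^0$, since here $E_f$ can contain only the single face of $\Gamma^2(f)$. That theorem then supplies both the existence and the uniqueness of $I\in\mathcal J\cap\Gamma^1(f)$ and the exact value ${\pounds}_0(f)=m(I)-1$. As an independent cross-check via Theorem~\ref{Q1} one can pass to a normal form: the non-axis endpoint of $I$ is a lattice point at distance $1$ from two axes, i.e. a transverse cross term $z_az_b$; completing it against the axis vertex of $I$ and diagonalising $z_az_b=u^2-v^2$ turns $f$ into a Brieskorn--Pham form with canonical weights $\bigl(m(I),2,2\bigr)$, whence $\max_i w_i=m(I)$ (as $m(I)\ge 2$) and Theorem~\ref{Q1} again returns ${\pounds}_0(f)=m(I)-1$.

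For case $2^0$ the inequality ${\pounds}_0(f)\le m(S)-1$ is already Theorem~\ref{n2}$\,2^0$, so to pin the equality I would apply Theorem~\ref{Q1} in the form ${\pounds}_0(f)=\max_i w_i-1$ and identify $\max_i w_i$ with $m(S)$. Because $S$ is a genuine $2$-face it spans $H$ uniquely, so the weights are forced to be the intercepts, $w_i=x_i(S)$, and by the first paragraph $\max_i w_i=m(S)$. The point that makes this legitimate is that unexceptionality of $S$ leaves no room for a coordinate change lowering one of the $w_i$: there is no vertex at distance $1$ from an axis sitting over an $(n-2)$-face contained in a coordinate hyperplane, hence no removable cross term pointing at an axis, so the intercepts already are the canonical weights for which Theorem~\ref{Q1} is stated. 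Combining with the upper bound yields ${\pounds}_0(f)=m(S)-1$.

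The step I expect to be the real obstacle is this last identification of the weights of Theorem~\ref{Q1} with the intercepts $x_i(S)$ in the unexceptional case. It cannot be skipped, because for an \emph{exceptional} face the two genuinely differ: for $f=z_1z_3+z_2^{\,b}+z_3^{\,k}$ the $2$-face carries the intercept $x_3=k$, yet the substitution $z_1\mapsto z_1+z_3^{\,k-1}$ brings $f$ to $z_1z_3+z_2^{\,b}$, i.e. canonical weights $(2,b,2)$ and ${\pounds}_0(f)=b-1$, not $k-1$. Thus the heart of case $2^0$ is a structural analysis of the face polynomial $f_S$ proving that the absence of the exceptional configuration forbids every completing-the-square reduction, so that no weight can drop below its intercept; once this is granted the reformulation is immediate. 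The only remaining bookkeeping, already delivered by Theorem~\ref{n2} in case $1^0$, is the existence and uniqueness of the distinguished segment $I$.
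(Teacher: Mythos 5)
Your derivation has a decisive gap, and it sits exactly where you lean on Theorem \ref{n2}: that theorem assumes $f$ is nondegenerate in the Kouchnirenko sense, Theorem \ref{n3} does not, and weighted homogeneity plus isolatedness do \emph{not} imply nondegeneracy. Concretely, $f=z_1z_3+(z_2^2-z_3^3)^2$ is weighted homogeneous with (unique) weights $(6/5,4,6)$ and has an isolated critical point at $0$ (from $f'_{z_1}=z_3$ one gets $z_3=0$, then $f'_{z_2}=4z_2^3$ gives $z_2=0$, then $f'_{z_3}$ reduces to $z_1$); its only $2$-dimensional face $S=\conv\{(1,0,1),(0,4,0),(0,0,6)\}$ is exceptional with respect to $OX_3$, so this $f$ falls under case $1^0$, yet $f$ is degenerate on the edge $\overline{(0,4,0)(0,0,6)}$, where $f_T=(z_2^2-z_3^3)^2$ vanishes together with its gradient at $(1,1,1)\in(\mathbb{C}^*)^3$. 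Likewise $f=(z_1^2-z_2^3)^2+z_1^2z_3+z_3^2$ (weights $(4,6,2)$) is an isolated singularity, degenerate on the edge carrying $(z_1^2-z_2^3)^2$, and its unique $2$-face is the \emph{unexceptional} triangle $\conv\{(4,0,0),(0,6,0),(0,0,2)\}$; so the upper bound you import from Theorem \ref{n2} in case $2^0$ is unavailable as well. Note also that the paper never derives \ref{n3} from \ref{n2}: it presents \ref{n3} as a reformulation of Theorem \ref{Q1} (the result of \cite{KOP}, proved with no nondegeneracy hypothesis) and advertises \ref{n2} as the \emph{generalization} of \ref{n3} to nondegenerate singularities; your argument points the logical arrow the wrong way, and only the sub-case $\Gamma^2(f)=\emptyset$ survives it (there $\supp f$ lies on the segment $I$ by Proposition \ref{n61}, so $f=az_iz_j+bz_k^m$ and nondegeneracy is automatic).

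The first example above also shows that your treatment of Theorem \ref{Q1} misses the real difficulty. In case $2^0$ there is nothing to ``identify'': a $2$-dimensional face spans a unique plane, so the weights of the given $f$ are forced to equal the intercepts $x_i(S)$; completing-the-square manoeuvres replace $f$ by a right-equivalent germ with \emph{different} weights and are irrelevant to applying \ref{Q1} to $f$ itself. The genuine subtlety is the opposite one: \ref{Q1} as reproduced in the paper (with no restriction on the weights) cannot be read literally, for $f=z_1z_3+(z_2^2-z_3^3)^2$ it would give $\pounds_0(f)=\max(6/5,4,6)-1=5$, whereas the quadratic part $z_1z_3$ has rank $2$, so Lemma \ref{milnor} yields $\pounds_0(f)=\mu_0(f)$, and the gradient ideal equals $(z_1,z_2^3,z_3)$, whence $\pounds_0(f)=3=m(I_2^4)-1$, exactly as case $1^0$ predicts. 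Thus the exceptional/unexceptional dichotomy of \ref{n3} is precisely what repairs the naive weight formula, and a proof must verify that unexceptionality (with isolatedness) restores the actual hypotheses under which \cite{KOP} prove \ref{Q1} --- the step you flag as ``the real obstacle'' and then leave unproved; your Brieskorn--Pham ``cross-check'' cannot substitute for it, since it silently discards every monomial of $f$ other than the two endpoints of $I$. A sound route for case $1^0$, using only tools already in the paper plus Milnor--Orlik, is: existence and uniqueness of $I$ from Theorem \ref{n19} and Proposition \ref{k2} (no nondegeneracy needed); then $\pounds_0(f)=\mu_0(f)$ by Lemma \ref{milnor}, since the off-axis endpoint of $I$ contributes a cross term $z_az_b$ and the Hessian at $0$ has rank at least $2$; then $\mu_0(f)=\prod_{i=1}^3(w_i-1)=m(I)-1$, using the relation $1/w_a+1/w_b=1$ forced by that endpoint.
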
 

So we see the main result of this paper is a generalization of the above theorem (in a weaker form) to non-degenerate case. In 2010 the paper by
Tan, Yau, Zuo (\cite{TYZ}) appeared, in which Theorem \ref{Q1} was given in analogous form for $n$-variables, $n>3,$ but their proof is false (the proof of their Proposition 3.4 is false).  Some results for quasihomogeneous singularities in $n$-dimensional case were also given by Bivia-Ausina and Encinas (\cite{BE1}, \cite{BE2}).  


\section{Auxiliary lemmas and properties.}
 Put $A-1_i:=A-(0,\ldots,\underset{\hat i}1,\ldots,0),\,i=1,\ldots,n$ for every nonempty $A\subset \mathbb{R}^n.$ We give now two simple and useful properties. The proofs are easy, so we omit them.  

\begin{property}
\label{n8} Let $f\in\mathcal{O}^n,\,f(0)=0$ and $\phi=(\phi_i)_{i=1}^n\in\mathbb{C}\{t\}^n$ be a parametrization such that $\phi_i\neq 0,\,i=1,\ldots,n,$ and $L$ be the supporting hyperplane to $\Gamma_+(f)$ such that $w=(\ord\phi_i)_{i=1}^n\bot L.$ 
If $\inc_w f\circ\inc\phi\neq 0,$ then \\
a) $\inc(f\circ \phi)=\inc_w f\circ\inc\phi,\,\ord(f\circ\phi)=\ord_w f,$\\
b) $m(L)=\frac{\ord_w f}{\min_{i=1}^n w_i}=\frac{\ord(f\circ \phi)}{\ord\phi}.$
\end{property}

\begin{property}
\label{n7} Let $f\in\mathcal{O}^n,\,f(0)=0,\,w\in\mathbb{N}^n,\,i\in\{1,\ldots,n\}.$ Suppose that there exists a monomial in $\inc_w(f)$ in which the variable $z_i$ appears, then $$(\inc_w f)'_{z_i}=\inc_w f'_{z_i}.$$ Moreover, if $L$ is the supporting hyperplane to $\Gamma_+(f)$ such that $w\bot L,$ then $L-1_i$ is a supporting hyperplane to $\Gamma_+(f'_{z_i}).$ 
\end{property}

The following lemma will be useful in the proof of Lemma \ref{n6}. 

\begin{lemma} 
\label {n10} Let $f:\left( \mathbb {C}^n,0\right)\longrightarrow \left( \mathbb {C},0\right),\,n\geq 2, $ be a
singularity and $\phi=(\phi _i)_{i=1}^n\in \mathbb{C}\{t\}^n$ be a parameterization such that $\phi _i\neq  
0,\,i=1,\ldots ,n,$ and $w:=(\ord\phi_i)_{i=1}^n.$ Let $$I:=\{i\in \{1,\ldots,n\}:f'_{z_i}\circ\phi= 0 \}\neq\emptyset.$$ Then for the face $S:=\Delta(w,\Gamma_+(f))\in \Gamma(f)$ we get that $(f_S)'_{z_i}\circ \inc\phi=0$ for $i\in I.$ 
\end{lemma}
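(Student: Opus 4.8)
The plan is to reduce the statement to the two preceding properties. First I would record the basic dictionary between the combinatorial and analytic descriptions: since $w=(\ord\phi_i)_{i=1}^n$ and $S=\Delta(w,\Gamma_+(f))$, the quasihomogeneous polynomial $f_S$ coincides with the initial form $\inc_w f$, and each $\inc\phi_i$ is a nonzero monomial $c_it^{w_i}$ with $c_i\neq 0$. Fixing $i\in I$, I would then distinguish two cases according to whether the variable $z_i$ occurs in some monomial of $\inc_w f=f_S$.

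If $z_i$ does not occur in $f_S$, then $(f_S)'_{z_i}=0$ identically and the conclusion $(f_S)'_{z_i}\circ\inc\phi=0$ is trivial. The substance lies in the opposite case. Here Property \ref{n7} applies and yields two facts at once: $(f_S)'_{z_i}=(\inc_w f)'_{z_i}=\inc_w(f'_{z_i})$, and $L-1_i$ is a supporting hyperplane to $\Gamma_+(f'_{z_i})$, where $L$ is the supporting hyperplane to $\Gamma_+(f)$ with $w\perp L$. Since translating by $1_i$ preserves the normal direction, $w\perp(L-1_i)$ as well. I would now argue by contradiction: suppose $(f_S)'_{z_i}\circ\inc\phi=\inc_w(f'_{z_i})\circ\inc\phi\neq 0$. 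Applying Property \ref{n8}(a) to the function $g:=f'_{z_i}$, the parametrization $\phi$, and the supporting hyperplane $L-1_i$, I would obtain $\inc(g\circ\phi)=\inc_w g\circ\inc\phi\neq 0$, hence in particular $f'_{z_i}\circ\phi=g\circ\phi\neq 0$. This contradicts $i\in I$, so $(f_S)'_{z_i}\circ\inc\phi=0$, as required.

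The point requiring care is the legitimacy of invoking Property \ref{n8} for $g=f'_{z_i}$. Its hypotheses demand that $g$ vanish at the origin and that $w$ be orthogonal to a supporting hyperplane of $\Gamma_+(g)$; the former holds because $f$ is a singularity, so $\nabla f(0)=0$ and thus $f'_{z_i}(0)=0$, while the latter is exactly what Property \ref{n7} supplies through $L-1_i$. I expect this transfer of the supporting-hyperplane hypothesis from $f$ to $f'_{z_i}$ to be the main obstacle, since it is precisely what forces the case distinction on whether $z_i$ appears in $\inc_w f$ (Property \ref{n7} requires such a monomial). As an alternative that bypasses Property \ref{n8}, one may compute directly: both $(f_S)'_{z_i}\circ\inc\phi$ and the lowest-order part of $f'_{z_i}\circ\phi$ are supported in the single degree $l(w,\Gamma_+(f))-w_i$, and their coefficients coincide because only exponents $\nu\in S$ with $\nu_i>0$ contribute at that order; the vanishing of $f'_{z_i}\circ\phi$ then forces that common coefficient to be zero.
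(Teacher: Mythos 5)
Your proof is correct and follows essentially the same route as the paper's: the case split on whether $z_i$ occurs in $f_S$ matches the paper's decomposition of $I$ into $J=\{j\in I: S\subset\{x_j=0\}\}$ and $I\setminus J$, and in the nontrivial case you combine Property \ref{n7} with the contrapositive of Property \ref{n8}(a) applied to $f'_{z_i}$, exactly as the paper does. Your explicit verification that $f'_{z_i}(0)=0$ and that $L-1_i$ supplies the supporting-hyperplane hypothesis is a welcome elaboration of steps the paper leaves implicit.
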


\begin{proof} 
Put $J:=\{j\in I: S\subset\{(x_1,\ldots ,x_n)\in\mathbb{R}^n:x_j=0\}\}.$
Then for every $i\in I\setminus J$ we can find a monomial in $\inc_w(f)$ in which the variable $z_i$ appears. Therefore we get by Property \ref{n7}  $(\mbox{in}_w f)_{z_i}'=\mbox{in}_w f'_{z_i}$ for $i\in I\setminus J.$ Hence and by Property \ref{n8}a we get for $i\in I\setminus J$ $$0=\mbox{in}_w f'_{z_i}\circ\mbox{in}\phi=(\mbox{in}_w f)_{z_i}'\circ\mbox{in}\phi=(f_S)'_{z_i}\circ\mbox{in}\phi.$$ 
On the other hand $(f_S)'_{z_i}\circ\mbox{in}\phi=0,$ for $i\in J.$  Summing up we obtain that $(f_S)'_{z_i}\circ\rm{in}\phi=0$ for $i\in I.$ 
\end{proof}
 
The following corollaries are direct consequeance of the above lemma. They show that nondegenarate singularity is "near generic" isolated singularity.

\begin{corollary}
\label{n11} Let $f:\left( \mathbb {C}^n,0\right)\longrightarrow \left( \mathbb {C},0\right),\,n\geq 2, $ be a
 singularity and $\phi=(\phi _i)_{i=1}^n\in \mathbb{C}\{t\}^n$ be a parametrization such that $\phi _i\neq  
0,\,i=1,\ldots ,n.$ If $\nabla f\circ\phi=0,$ then there exists face $S\in\Gamma(f)$ such that $\nabla (f_S)\circ\inc\phi=0,$ so $f$ is degenerate on the face $S.$
\end{corollary}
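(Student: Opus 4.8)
The plan is to derive this corollary directly from Lemma \ref{n10}. Suppose $\nabla f\circ\phi=0$, which means $f'_{z_i}\circ\phi=0$ for every $i\in\{1,\ldots,n\}$. Then the index set $I:=\{i\in\{1,\ldots,n\}:f'_{z_i}\circ\phi=0\}$ equals the full set $\{1,\ldots,n\}$, and in particular $I\neq\emptyset$, so the hypothesis of Lemma \ref{n10} is satisfied.

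First I would set $w:=(\ord\phi_i)_{i=1}^n$ and consider the face $S:=\Delta(w,\Gamma_+(f))\in\Gamma(f)$; here one should note that since all components $\phi_i\neq 0$, the vector $w$ has strictly positive coordinates, so $S$ is a compact face and indeed belongs to $\Gamma(f)$. Applying Lemma \ref{n10} with $I=\{1,\ldots,n\}$ yields $(f_S)'_{z_i}\circ\inc\phi=0$ for every $i\in I=\{1,\ldots,n\}$, which is precisely the statement that $\nabla(f_S)\circ\inc\phi=0$.

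Finally I would conclude degeneracy: by hypothesis $\inc\phi=(\inc\phi_i)_{i=1}^n$ has all components nonzero (since each $\phi_i\neq 0$), and each $\inc\phi_i$ is a monomial $c_it^{w_i}$ with $c_i\neq 0$. Substituting $t=1$ (or any nonzero value) gives a point $(c_1,\ldots,c_n)\in(\mathbb{C}^*)^n$ at which all partial derivatives $(f_S)'_{z_i}$ vanish, since the quasihomogeneity of $f_S$ makes $(f_S)'_{z_i}\circ\inc\phi$ a monomial in $t$ whose coefficient is $(f_S)'_{z_i}(c_1,\ldots,c_n)$; the vanishing of this monomial forces the coefficient to vanish. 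Thus the system $(f_S)'_{z_1}=\cdots=(f_S)'_{z_n}=0$ has a solution in $(\mathbb{C}^*)^n$, which by definition means $f$ is degenerate on $S$.

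I do not expect a genuine obstacle here, as the corollary is essentially a repackaging of Lemma \ref{n10} in the special case $I=\{1,\ldots,n\}$. The only point requiring a little care is the last step: translating $\nabla(f_S)\circ\inc\phi=0$ (an identity of power series in $t$) into the existence of an actual common zero in $(\mathbb{C}^*)^n$, which relies on the observation that each $(f_S)'_{z_i}\circ\inc\phi$ reduces to a single monomial in $t$ so that its vanishing is equivalent to the vanishing of $(f_S)'_{z_i}$ at the nonzero point $(c_1,\ldots,c_n)$.
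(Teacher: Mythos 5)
Your proof is correct and follows exactly the route the paper intends: the paper states this corollary as a direct consequence of Lemma \ref{n10} (with no written proof), obtained by taking $I=\{1,\ldots,n\}$, which is precisely what you do. Your final paragraph, translating the identity $\nabla(f_S)\circ\inc\phi=0$ in $\mathbb{C}\{t\}^n$ into an actual common zero $(c_1,\ldots,c_n)\in(\mathbb{C}^*)^n$ via the quasihomogeneity of $f_S$, correctly fills in the detail that the paper leaves implicit in the phrase ``so $f$ is degenerate on the face $S$.''
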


\begin{corollary}
\label{n12} Let $f:\left( \mathbb {C}^n,0\right)\longrightarrow \left( \mathbb {C},0\right),\,n\geq 2, $ be a nondegenarate singularity. If $\phi=(\phi _i)_{i=1}^n\in \mathbb{C}\{t\}^n$ is a parametrization such that $\phi _i\neq 0,\,i=1,\ldots ,n,$ then $\nabla f\circ\phi\neq 0.$ 
 \end{corollary}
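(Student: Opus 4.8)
The plan is to obtain Corollary \ref{n12} directly from Corollary \ref{n11} by contraposition. I would argue by contradiction: suppose that, despite $f$ being nondegenerate, there is a parametrization $\phi=(\phi_i)_{i=1}^n\in\mathbb{C}\{t\}^n$ with $\phi_i\neq 0$ for all $i$ and $\nabla f\circ\phi=0$. Applying Corollary \ref{n11} to this very $\phi$ produces a compact face $S\in\Gamma(f)$ with $\nabla(f_S)\circ\inc\phi=0$, so that $f$ is degenerate on $S$. Since $S$ belongs to the Newton boundary $\Gamma(f)$, this contradicts the assumption that $f$ is nondegenerate on each face of $\Gamma(f)$. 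Hence no such $\phi$ exists, and $\nabla f\circ\phi\neq 0$ for every parametrization with nonvanishing components.

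The substance of the argument therefore lies entirely in Corollary \ref{n11}, which in turn rests on Lemma \ref{n10}; Corollary \ref{n12} itself is merely its logical contrapositive. For completeness I would spell out why $\nabla(f_S)\circ\inc\phi=0$ genuinely forces degeneracy on $S$ in the sense of the definition. Writing $\inc\phi_i=c_it^{w_i}$ with $c_i\neq 0$ and $w_i=\ord\phi_i$, the map $t\mapsto\inc\phi(t)=(c_1t^{w_1},\dots,c_nt^{w_n})$ takes values in $(\mathbb{C}^*)^n$ for every $t\neq 0$, because all leading coefficients $c_i$ are nonzero. The identity $(f_S)'_{z_j}\circ\inc\phi=0$ in $\mathbb{C}\{t\}$ then means $(f_S)'_{z_j}(\inc\phi(t))=0$ for all $t$, and specializing to any single $t\neq 0$ exhibits a common zero of $(f_S)'_{z_1},\dots,(f_S)'_{z_n}$ lying in $(\mathbb{C}^*)^n$. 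This is exactly what the definition of degeneracy on $S$ forbids.

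The only step one might worry about --- and it is not a real obstacle --- is ensuring that the face $S$ delivered by Corollary \ref{n11} is one of the faces over which nondegeneracy is assumed. This is automatic: Corollary \ref{n11} returns a face $S\in\Gamma(f)$, i.e. a compact face of the Newton boundary, and nondegeneracy of $f$ is by definition nondegeneracy on every such face. Consequently no additional hypothesis or computation is needed, and the corollary follows in a single line once Corollary \ref{n11} is invoked.
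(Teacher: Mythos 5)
Your proof is correct and follows exactly the route the paper intends: the paper states Corollaries \ref{n11} and \ref{n12} as direct consequences of Lemma \ref{n10}, and your argument (Corollary \ref{n12} as the contrapositive of Corollary \ref{n11}, plus the observation that $\inc\phi(t)\in(\mathbb{C}^*)^n$ for $t\neq 0$ so that $\nabla(f_S)\circ\inc\phi=0$ really does violate nondegeneracy on $S$) is precisely the intended derivation, with the $(\mathbb{C}^*)^n$ detail spelled out where the paper leaves it implicit.
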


\begin{example} Assumptions that $\phi _i\neq 0,\,i=1,\ldots ,n,$ are necessary in the above corollaries. Indeed, let $f(z_1,z_2,z_3)=z_1(z_2+z_3)$ and $\phi(t)=(0,t,-t).$ It is easy to check that $f$ is nondegenerate singularity and $\nabla f\circ \phi=0.$ 
\end{example}

We give now a simple property, which is needed in the proof of the next property.

\begin{property}\label{pr5} Let $f\in\mathcal{O}^n,\,f(0)=0.$ Then $l(u,\Gamma_+(f))\geq \min_{i=1}^n u_i.$ \end{property}

\begin{proof} By definition of $\Gamma_+(f)$ we get that for every $x\in\Gamma_+(f)$ there are exist $a_j\in\supp(f),b_j\in\mathbb{R}_+^n,j=1,\ldots,k,$ and nonnegative real numbers $c_j,j=1,\ldots,k,\,\sum_{j=1}^kc_j=1,$ such that $x=\sum_{j=1}^kc_j(a_j+b_j).$ We have further  
\begin{eqnarray}\label{pri}\langle u,x\rangle &=& \sum_{i=1}^n u_ix_i=\sum_{i=1}^nu_i\left(\sum_{j=1}^kc_j[(a_j)_i+(b_j)_i)]\right)\geq(\min_{i=1}^nu_i)\cdot \\ \nonumber &\cdot & \sum_{i=1}^n\left(\sum_{j=1}^kc_j[(a_j)_i+(b_j)_i]\right)= (\min_{i=1}^nu_i)\sum_{j=1}^kc_j\left(\sum_{i=1}^n[(a_j)_i+(b_j)_i]\right).
\end{eqnarray}
 Since $f(0)=0$ and $a_j\in\supp(f),j=1,\ldots k,$ so $(a_j)_i\geq 1$ for some $i\in\{1,\ldots n\}$ dependent from $j,$ thus $\sum_{i=1}^n[(a_j)_i+(b_j)_i]\geq 1,\,j=1,\ldots,k.$ This and inequality (\ref{pri}) shows that $\langle u,x\rangle\, \geq \min_{i=1}u_i$ for $x\in\Gamma_+(f),$ so $l(u,\Gamma_+(f))\geq \min_{i=1}^n u_i.$ It finishes the proof. \end{proof}  

We give now useful property, which will be often used in the next part of the paper. 

\begin{property}
\label{n4'}Let $f\in\mathcal{O}^n,\,f(0)=0,$ and $L$ be a supporting hyperplane to a compact face of $\Gamma_+(f).$ Then $m(L)\geq 1.$ Morover if $f'_{z_i}(0)=0$ and $L-1_i$ supports a compact face of $\Gamma_+(f'_{z_i}),$ then $m(L-1_i)\geq 1,\,m(L)\geq 2$ and $$m(L-1_i)\leq m(L)-1,\,i\in\{1,\ldots,n\}.$$ 
\end{property}

 \begin{proof} 
Let $u=(u_j)_{j=1}^n$ be a supporting vector to the hyperplane $L$ and also $L-1_i$ and let $b:=l(u,\Gamma_+(f)),\,b_i:=l(u,\Gamma_+(f'_{z_i})).$ Then $L,L-1_i$ have respectively equations $\sum_{j=1}^n u_j x_j=b,\,\, \sum_{j=1}^n u_j x_j=b_i.$ Because $L,L-1_i$ support compact faces, so $u_j>0,\,j=1,\ldots,n,$ and then by Property \ref{pr5} we have $b,b_i>0.$ On the other hand hyperplane $L-1_i$ has equation $\sum_{j=1}^nu_j(x_j+\delta_{ij})=b,$ where $\delta_{ij}$ is the Kronecker symbol. Hence $b-u_i=b_i>0.$ Therefore we get   
\begin{equation} \label{n4''} m(L)=\max_{j=1}^n\frac{b}{u_j},\,m(L-1_i)=\max_{j=1}^n\frac{b_i}{u_j}=\max_{j=1}^n\frac{b-u_i}{u_j}. \end{equation} 
Hence by Property \ref{pr5} we have that $m(L),m(L-1_i)\geq 1.$ By (\ref{n4''}) we get \begin{eqnarray*}m(L-1_i)&=&\max_{j=1}^n\frac{b-u_i}{u_j}=\frac{b-u_i}{\min_{j=1}^n u_j}\leq\frac{b-\min_{j=1}^n u_j}{\min_{j=1}^n u_j}=\frac{b}{\min_{j=1}^n u_j}-1\leq \\
& \leq & \max_{j=1}^n\frac{b}{u_j}-1=m(L)-1. \end{eqnarray*} Because $m(L-1_i)\geq 1,$ so from the last inequality we have that $m(L)\geq 2.$ It finishes the proof. 
\end{proof}

We give now the lemma, which is important in the second part of proof of the Theorem \ref{n2}. It shows a method to find an  upper bound of the \L ojasiewicz exponent of nondegenerate singularity in terms of its Newton diagram. 
 
\begin{lemma}
\label{n6} Let $f:\left( \mathbb {C}^n,0\right)\longrightarrow \left( \mathbb {C},0\right)$ be a nondegenerate
 singularity. Let $\phi=(\phi_i)_{i=1}^n\in\mathbf{C}\{t\}^n$ be a parametrization such that $\phi_i\neq 0,\,i=1,2,\ldots,n,$ and $L$ be the supporting hyperplane to $\Gamma_+(f)$ such that $w:=(\ord\phi_i)_{i=1}^n\bot L.$ Then $\nabla f\circ\phi\neq 0$ and $$\frac{\ord(\nabla f\circ\phi)}{\ord\phi}\leq m(L)-1.$$
\end{lemma}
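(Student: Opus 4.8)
The plan is to combine Corollary~\ref{n12} for the nonvanishing statement with Property~\ref{n4'} and Property~\ref{n8} for the quantitative bound. First, since $f$ is nondegenerate and $\phi_i\neq 0$ for all $i$, Corollary~\ref{n12} immediately gives $\nabla f\circ\phi\neq 0$. In particular there is some index $i$ with $f'_{z_i}\circ\phi\neq 0$, and the order $\ord(\nabla f\circ\phi)=\min_i\ord(f'_{z_i}\circ\phi)$ is attained at such an index. So the problem reduces to bounding, for the coordinate index realizing the minimum, the ratio $\ord(f'_{z_i}\circ\phi)/\ord\phi$ from above by $m(L)-1$.

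The key idea is to apply Property~\ref{n8}b to the partial derivative $f'_{z_i}$ rather than to $f$. I would fix an index $i$ achieving the minimal order with $f'_{z_i}\circ\phi\neq 0$. Since $f$ is a singularity, $f'_{z_i}(0)=0$, and one expects the variable $z_i$ genuinely to appear in $\inc_w f$ (this is where Lemma~\ref{n10} is relevant: if $z_i$ did not appear on the face, degeneracy considerations would force $f'_{z_i}\circ\phi$ to vanish, contradicting the choice of $i$). Granting that $z_i$ appears, Property~\ref{n7} tells us that $L-1_i$ is a supporting hyperplane to $\Gamma_+(f'_{z_i})$, and that $(\inc_w f)'_{z_i}=\inc_w f'_{z_i}\neq 0$. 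Then applying Property~\ref{n8} (with $f'_{z_i}$ in place of $f$ and the same $w\bot L-1_i$) yields
$$\frac{\ord(f'_{z_i}\circ\phi)}{\ord\phi}=m(L-1_i).$$
Finally Property~\ref{n4'} gives $m(L-1_i)\leq m(L)-1$, and since $\ord(\nabla f\circ\phi)/\ord\phi$ equals this ratio for the chosen $i$, the desired inequality follows.

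The main obstacle I anticipate is justifying cleanly that the minimizing index $i$ is one for which both $f'_{z_i}\circ\phi\neq 0$ and the hypotheses of Property~\ref{n8} (namely $\inc_w f'_{z_i}\circ\inc\phi\neq 0$) hold simultaneously. The subtlety is that $\ord(f'_{z_i}\circ\phi)$ could a priori exceed $\ord_w(f'_{z_i})$ if cancellation occurs in $\inc_w f'_{z_i}\circ\inc\phi$; Property~\ref{n8} only applies when no such cancellation happens. The role of Lemma~\ref{n10} and Corollary~\ref{n12} is precisely to rule this out on the relevant face: nondegeneracy guarantees that $\nabla(f_S)\circ\inc\phi\neq 0$ where $S=\Delta(w,\Gamma_+(f))$, so some partial of the face polynomial survives after substitution, and one must match that surviving index with the order-minimizing index. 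I would handle this by showing that for any $i$ with $\inc_w f'_{z_i}\circ\inc\phi\neq 0$ the chain above gives the bound $m(L-1_i)\leq m(L)-1$, and that at least one such $i$ realizes the minimum thanks to the nonvanishing of $\nabla(f_S)\circ\inc\phi$, thereby controlling $\ord(\nabla f\circ\phi)/\ord\phi$ by $m(L)-1$.
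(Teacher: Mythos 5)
Your ingredients (Corollary \ref{n12}, Lemma \ref{n10}, Properties \ref{n7}, \ref{n8}, \ref{n4'}) and overall plan coincide with the paper's, but the way you close the argument has a genuine gap. Both your main route and your fallback require that the index realizing $\ord(\nabla f\circ\phi)=\min_i\ord(f'_{z_i}\circ\phi)$ lies in the set $J:=\{i\colon (f_S)'_{z_i}\circ\inc\phi\neq 0\},$ where $S=\Delta(w,\Gamma_+(f)).$ This uses Lemma \ref{n10} backwards: the lemma states $f'_{z_i}\circ\phi=0\Rightarrow(f_S)'_{z_i}\circ\inc\phi=0,$ i.e. $J\subset K:=\{i\colon f'_{z_i}\circ\phi\neq 0\};$ it does not say that a nonvanishing $f'_{z_i}\circ\phi$ has nonvanishing initial part, and nondegeneracy only guarantees $J\neq\emptyset,$ not that the minimum is attained in $J.$ In fact the minimum can be attained strictly outside $J.$ Take $f(z_1,z_2)=z_1^6+z_1^3z_2+z_2^2$ (nondegenerate: on the only $1$-dimensional face $S$ the partials $3z_1^2(2z_1^3+z_2)$ and $z_1^3+2z_2$ have no common zero in $(\mathbb{C}^*)^2$) and $\phi(t)=(t,-\tfrac{1}{2}t^3+t^4),$ so $w=(1,3)$ and $L$ is the line $x_1+3x_2=6.$ Then $(f_S)'_{z_2}\circ\inc\phi=0,$ so $J=\{1\},$ yet $\ord(f'_{z_2}\circ\phi)=4<5=\ord(f'_{z_1}\circ\phi)$: the minimizing index is $2\notin J,$ cancellation in the initial term does occur there, and no appeal to nondegeneracy can rule this out.

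The repair is to drop the identification entirely: for an upper bound you never need to know where the minimum is attained. Since $J\neq\emptyset$ (if all $(f_S)'_{z_i}$ vanished at $\inc\phi$ they would vanish at a point of $(\mathbb{C}^*)^n,$ contradicting nondegeneracy on $S$) and $J\subset K$ by Lemma \ref{n10}, you have
$$\frac{\ord(\nabla f\circ\phi)}{\ord\phi}=\min_{i\in K}\frac{\ord(f'_{z_i}\circ\phi)}{\ord\phi}\leq\min_{i\in J}\frac{\ord(f'_{z_i}\circ\phi)}{\ord\phi}=\min_{i\in J}m(L-1_i)\leq m(L)-1,$$
where the middle equality is exactly your chain, now applied only to indices of $J$ where it is legitimate: for $i\in J$ the variable $z_i$ appears in $f_S,$ so Property \ref{n7} makes $L-1_i$ a supporting hyperplane of $\Gamma_+(f'_{z_i})$ and $\inc_w f'_{z_i}\circ\inc\phi=(f_S)'_{z_i}\circ\inc\phi\neq 0,$ hence Property \ref{n8}b applies to $f'_{z_i};$ the last inequality is Property \ref{n4'}. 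This one-line reversal --- bounding $\min_K$ by $\min_J$ instead of locating the minimizer inside $J$ --- is precisely how the paper's proof proceeds, and in the example above it gives the valid bound $4\leq 5=m(L)-1$ even though the ratio at the true minimizing index cannot be computed by Property \ref{n8}.
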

 
\begin{proof}
By Corollary \ref{n12} we get that $\nabla f\circ\phi\neq 0.$ From our assumption $w$ has positive coordinates, so $L$ is a supporting hyperplane to a compact face $S\in\Gamma(f).$ Set $$J:=\{ i\in \{1,\ldots,n\}  \colon (f_S)'_{z_i}\circ \inc \phi \neq 0 \},\,K:= \{ i\in \{1,\ldots,n\} \colon f'_{z_i}\circ\phi\neq 0\}.$$ Because $f$ is a nondegenerate singularity, so $J\neq \emptyset.$ By Lemma \ref{n10} we get that $J \subset K.$ Observe that, for $i\in J$ we can find a monomial in $f_S$ in which the variable $z_i$ appears and then by Property \ref{n7} $L-1_i$ supports compact face of $\Gamma_+(f'_{z_i})$ and $\inc_w f'_{z_i}=(\inc_w f)'_{z_i}.$ Hence and because $f_S=\inc_w f,$ we get for $i\in J$  $$\inc_w f'_{z_i}\circ\inc\phi=(\inc_w f)'_{z_i}\circ\inc\phi=(f_S)'_{z_i}\circ\inc\phi\neq 0.$$ Hence and because $J \subset K,$ so by Property \ref{n4'} and Property \ref{n8} we get, that
\begin{eqnarray*}
  \frac{\ord(\nabla f\circ\phi)}{\ord\phi} & = & \min_{i\in K}\frac{\ord( f'_{z_i}\circ\phi)}{\ord\phi}\leq\min_{i\in J}\frac{\ord( f'_{z_i}\circ\phi)}{\ord\phi}=\min_ {i \in J}\frac{\ord_w f'_{z_i}}{\min_{i=1}^n w_i}= \\
  &=&\min_{i\in J} m(L-1_i)\leq m(L)-1.
\end{eqnarray*}
It concludes the proof. \end{proof}

The following property in this section says that the Newton boundary of the restriction $f|_{\{z_{k+1}=\ldots =z_n=0\}}$ is the restriction of the Newton boundary of $f$ to the set $\{x_{k+1}=\ldots =x_n=0\}\subset\mathbb{R}^n.$ 

\begin{property}\label{w4} Let $f\in\mathcal{O}^n,\,n\geq 2.$ Assume that $g(z_1,\ldots ,z_k):=f(z_1,\ldots ,z_k,$ \\ $0,\ldots,0)\in\mathcal{O}^k,\,k<n,$ is a nonzero germ. Then 
\begin{equation}\Gamma(g)=\left\{S\in \Gamma(f):S\subset \{x_{k+1}=\ldots =x_n=0\}\right\}.\end{equation}
\end{property}

 \begin{proof}
 $"\subset".$ Let $S\in \Gamma(g),$ then $ S=\Delta(u,\Gamma_+(g))$ for some 
 $u\in(\mathbb{R}_+\setminus \{0\})^k.$ Of course $S\subset\Gamma_+(f)\cap \{x_{k+1}=\ldots =x_n=0\}.$ Set
 $$u'=(u_1,\ldots,u_k,l(u,\Gamma_+(g))+1,\ldots,l(u,\Gamma_+(g))+1)\in \mathbb{R}^n.$$ We show that $S=\Delta(u',\Gamma_+(f))
 .$ By definition of $u'$ we have that $l(u',\Gamma_+(f))$ can be realized only for $v\in\Gamma_+(f)\cap \{x_{k+1}= \ldots =x_n=0\}.$ On the other hand it is easy to check that $$\Gamma_+(f)\cap \{x_{k+1}=\ldots =x_n=0\}=\Gamma_+(g).$$ So we get $l(u',\Gamma_+(f))=l(u,\Gamma_+(g))$ and $\Delta(u',\Gamma_+(f))=\Delta(u,\Gamma_+(g)).$ Summing up we obtain $S=\Delta(u',\Gamma_+(f)),$ so $S\in\Gamma(f).$ 

\vspace{0.15cm} 

\par\noindent $"\supset ".$ Let $S\in\Gamma(f)$ and $S\subset\{x_{k+1}=\ldots =x_n=0\}.$ Then $S=\Delta(u,\Gamma_+(f))$
 for some $u\in(\mathbb{R}_+\setminus\{0\})^n$ and as we observed above $\Gamma_+(f)\cap \{x_{k+1}=\ldots =
 x_n=0\}=\Gamma_+(g).$ So $l(u,\Gamma_+(f))=l(u',\Gamma_+(g)),$ where $u'=(u_1,\ldots,u_k).$ It follows that $\Delta(u',\Gamma_+(g))=\Delta(u,\Gamma_+(f)).$ Hence $S=\Delta(u',\Gamma_+(g)),$ so $S\in\Gamma(g).$ That concludes the proof. 
\end{proof}   

We give now an interesting property needed in the next part of the paper. 

\begin{property}
\label{n17} Let $f\in\mathcal{O}^n,\,n\geq 3,$ be an isolated singularity. Then $f$ is an irreducible germ in $\mathcal{O}^n.$ 
\end{property}

\begin{proof}
Suppose to the contrary, that $f=gh,$ where $g$ and $h$ are non-invertible in $\mathcal{O}^n.$ Then we have $f'_{z_i}=g'_{z_i}h+h'_{z_i}g,\,i=1,2,\ldots,n.$ Hence $V(g,h)\subset V(\nabla f).$ Since $g$ and $h$ are non-invertible, then $V(g,h)\neq\emptyset,$ because $0\in V(g,h).$ Hence by Corollary 8 \cite[p. 81]{G} and because $n\geq 3,$ we have that $\dim V(g,h)\geq 1.$ Therefore $\dim V(\nabla f)\geq 1,$ so $\nabla f$ hasn't an isolated zero, a contradiction. 
\end{proof}
\begin{remark} There exist reducible isolated singularities of two variables, e.g. $f(z_1,z_2)=z_1z_2.$ \end{remark}

The following corollary is a direct consequence of Property \ref{n17}.

\begin{corollary}
\label{n18} Let $f\in\mathcal{O}^n,\,n\geq 3,$ be an isolated singularity. Then $$\{x\in\mathbb{R}^n:\,x_i=0\} \cap \Gamma(f) \neq \emptyset,\,i=1,\ldots,n.$$ 
\end{corollary}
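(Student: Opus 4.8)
The plan is to fix an index $i\in\{1,\dots,n\}$ and reduce the claim to a statement about the restriction $g:=f|_{\{z_i=0\}}$. After permuting coordinates so that $z_i$ plays the role of the last variable, Property \ref{w4} (applied with $k=n-1$) identifies $\Gamma(g)$ with $\{S\in\Gamma(f):S\subset\{x_i=0\}\}$, provided $g$ is a nonzero germ. So the corollary splits into two tasks: showing that $g\neq 0$, and checking that a nonzero germ vanishing at $0$ always carries at least one compact face. Producing one compact face $S\subset\{x_i=0\}$ then gives $\{x\in\mathbb{R}^n:x_i=0\}\cap\Gamma(f)\supset S\neq\emptyset$, which is exactly what is required.

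The second task is elementary and I would dispatch it first. Since $g(0)=f(0)=0$, the Newton polyhedron $\Gamma_+(g)\subset\mathbb{R}_+^{n-1}$ is a nonempty closed convex set containing no line, hence it has an extreme point $v$. Any primitive vector $u$ supporting $v$ must have all coordinates positive: if some $u_j=0$, then for $t\geq 0$ we would have $v+te_j\in\Gamma_+(g)$ with $\langle u,v+te_j\rangle=l(u,\Gamma_+(g))$, so $\Delta(u,\Gamma_+(g))$ would contain a ray and could not be the single point $v$. By the criterion recalled in the introduction, $v$ is therefore a compact ($0$-dimensional) face, so $\Gamma(g)\neq\emptyset$.

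The key step, where Property \ref{n17} enters, is the nonvanishing $g\neq 0$, and this is the part I expect to be the real content. I would argue by contradiction: if $f|_{\{z_i=0\}}=0$, then every monomial of $f$ is divisible by $z_i$, so $f=z_i h$ for some $h\in\mathcal{O}^n$. Differentiating gives $f'_{z_i}=h+z_i h'_{z_i}$, and evaluating at the origin yields $h(0)=f'_{z_i}(0)=0$, since $f$ is a singularity and hence $\nabla f(0)=0$. Thus both factors $z_i$ and $h$ are non-invertible in $\mathcal{O}^n$, so $f=z_i h$ is reducible. For $n\geq 3$ this contradicts the irreducibility of isolated singularities established in Property \ref{n17}, and therefore $g\neq 0$.

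Putting the pieces together, for each $i$ the restriction $g=f|_{\{z_i=0\}}$ is a nonzero germ, so $\Gamma(g)\neq\emptyset$, and by Property \ref{w4} any face of $\Gamma(g)$ is a compact face of $f$ contained in $\{x_i=0\}$; this proves the corollary. The only genuine bookkeeping I anticipate is the coordinate permutation needed to apply Property \ref{w4} (which is stated for setting the last variables to zero) to an arbitrary axis, together with the remark that $g$ inherits $g(0)=0$ so that $\Gamma(g)$ is defined. The irreducibility input makes the decisive nonvanishing of $g$ immediate, so no delicate estimate is involved.
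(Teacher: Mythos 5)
Your proposal is correct and takes essentially the same route as the paper: both reduce the statement to the observation that emptiness of $\{x\in\mathbb{R}^n: x_i=0\}\cap\Gamma(f)$ would force $f=z_i h$ with $h(0)=0$, making $f$ reducible and contradicting Property \ref{n17}. The only difference is one of detail: the paper treats the implication ``$z_i\nmid f$ implies a compact face in $\{x_i=0\}$'' as immediate, whereas you justify it explicitly via Property \ref{w4} and the existence of a vertex of $\Gamma_+(g)$.
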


\begin{proof} 
Indeed, if $\{x\in\mathbb{R}^n:\,x_i=0\} \cap \Gamma(f)= \emptyset,$ then the singularity $f$ would be represented as $z_ig(z_1,\ldots,z_n),$ where $g$ is a holomorphic function.  Therefore $f$ would be reducible, which by Property \ref{n17} isn't possible.
\end{proof}  
The last lemma says, when the Milnor number is equal to the \L ojasiewicz exponent.
\begin{lemma}{\rm (see \cite{P1})}\label{milnor} Let $f:\left( \mathbb{C}^n,0\right)\longrightarrow \left( \mathbb {C},0\right) $ be an isolated singularity. Then $\pounds_0(f)\leq \mu_0(f).$ If additionally $\rank [f''_{z_iz_j}]_{i,j=1}^n(0)\geq n-1,$ then $\pounds_0(f)=\mu_0(f).$ 
\end{lemma}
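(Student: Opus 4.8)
The plan is to prove the two assertions separately, using throughout the curve description of the \L ojasiewicz exponent from property (b) together with the standard identification of the Milnor number as the colength of the Jacobian ideal, $\mu_0(f)=\dim_{\mathbb{C}}\mathcal{O}^n/(f'_{z_1},\ldots,f'_{z_n})=:m$, which is finite because $f$ is an isolated singularity.

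For the inequality $\pounds_0(f)\le\mu_0(f)$ I would first record the elementary fact that in an Artinian local $\mathbb{C}$-algebra of length $m$ the maximal ideal $\overline{\mathfrak m}$ satisfies $\overline{\mathfrak m}^{\,m}=0$ (the chain $\overline{\mathfrak m}\supsetneq\overline{\mathfrak m}^2\supsetneq\cdots$ drops dimension at each strict step). Applied to the Milnor algebra this gives $\mathfrak m^m\subseteq(f'_{z_1},\ldots,f'_{z_n})$, so for each $i$ there exist $h_{ij}\in\mathcal{O}^n$ with $z_i^m=\sum_{j=1}^n h_{ij}\,f'_{z_j}$. Now take any $0\neq z(t)\in\mathbb{C}\{t\}^n$ with $z(0)=0$ and substitute: comparing orders in $z_i(t)^m=\sum_j h_{ij}(z(t))\,f'_{z_j}(z(t))$ and using $\ord h_{ij}(z(t))\ge 0$ gives $m\cdot\ord z_i(t)\ge\ord\nabla f(z(t))$ for every $i$. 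Minimizing over $i$ yields $\ord\nabla f(z(t))\le m\cdot\ord z(t)$, hence $\frac{\ord\nabla f(z(t))}{\ord z(t)}\le m$; taking the supremum over all parametrizations and invoking property (b) gives $\pounds_0(f)\le m=\mu_0(f)$.

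For the equality statement when $\rank[f''_{z_iz_j}](0)\ge n-1$ I would reduce to a normal form. Both invariants are preserved under right equivalence: if $f=g\circ\varphi$ with $\varphi$ a biholomorphism fixing $0$, then $\nabla f(z)=(D\varphi(z))^{T}\nabla g(\varphi(z))$ with $D\varphi$ invertible and bounded near $0$, so $\|\nabla f(z)\|\asymp\|\nabla g(\varphi(z))\|$ and $\|z\|\asymp\|\varphi(z)\|$, forcing $\pounds_0(f)=\pounds_0(g)$, while the Milnor algebras are isomorphic. By the splitting lemma a singularity of corank $\le 1$ is right equivalent to $h=z_1^2+\cdots+z_{n-1}^2+z_n^{k}$ for some $k\ge 2$. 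For this $h$ one computes $\mu_0(h)=k-1$ directly, and from $\nabla h=(2z_1,\ldots,2z_{n-1},kz_n^{k-1})$ one gets $\ord\nabla h(z(t))=\min(\ord z_1(t),\ldots,\ord z_{n-1}(t),(k-1)\ord z_n(t))$; a short case analysis shows the ratio in (b) never exceeds $k-1$ and equals $k-1$ along $z(t)=(0,\ldots,0,t)$. Hence $\pounds_0(h)=k-1=\mu_0(h)$, and by invariance $\pounds_0(f)=\mu_0(f)$.

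The two order comparisons are routine; the step I expect to require the most care is the reduction in the second assertion, namely quoting the splitting lemma in the correct holomorphic form and verifying rigorously that $\pounds_0$ is a right-equivalence invariant. Conceptually, the idea that makes the first assertion short is the passage $\mu_0(f)=m\Rightarrow\mathfrak m^m\subseteq$ Jacobian ideal, which turns the analytic estimate into a single order comparison along curves; without it one is tempted to argue through the local degree of the gradient map, which is considerably less direct.
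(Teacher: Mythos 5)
Your proposal is correct, but it is genuinely different from what the paper does: the paper offers no proof at all for this lemma, delegating it entirely to the citation \cite{P1}, where the statement is a special case of P\l oski's results on finite holomorphic map germs $F\colon(\mathbb{C}^n,0)\to(\mathbb{C}^n,0)$ (the \L ojasiewicz exponent $l_0(F)$ is bounded by the local multiplicity $m(F)$, with equality under a rank condition on $dF(0)$), applied to $F=\nabla f$, for which $m(\nabla f)=\mu_0(f)$. Your argument avoids this multiplicity theory altogether. For the inequality, the chain $\mu_0(f)=\dim_{\mathbb{C}}\mathcal{O}^n/(\nabla f)=:m$, nilpotency $\mathfrak{m}^m\subseteq(f'_{z_1},\ldots,f'_{z_n})$ in the Artinian quotient, and the order comparison $m\cdot\ord z_i(t)\geq\ord\nabla f(z(t))$ for the index $i$ realizing $\ord z(t)$, combined with property (b), is sound; the only external input is the standard identification of $\mu_0(f)$ with the colength of the Jacobian ideal, which is legitimate to assume. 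For the equality, your reduction is also sound: right-equivalence invariance of $\pounds_0$ via $\nabla f(z)=(D\varphi(z))^{T}\nabla g(\varphi(z))$ together with $\|z\|\asymp\|\varphi(z)\|$ is complete, the holomorphic splitting lemma does put a corank $\leq 1$ isolated singularity in the form $h=z_1^2+\cdots+z_{n-1}^2+z_n^k$, $k\geq 2$ (isolatedness forces the one-variable residual part to be nonzero, and a $k$-th root of its unit factor normalizes it), and the computation $\pounds_0(h)=\mu_0(h)=k-1$ checks out: the ratio in (b) is at most $1$ when $\ord z(t)$ is realized by some $z_i$, $i<n$, at most $k-1$ when realized by $z_n$, and equals $k-1$ along $(0,\ldots,0,t)$. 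As for what each route buys: yours is self-contained and elementary (Nakayama, the curve criterion, the splitting lemma), which is pedagogically valuable; the paper's citation buys brevity and rests on a theorem valid for arbitrary finite maps rather than only gradients, which is why the author could quote it without reproducing a proof.
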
 


\section{A lemma about the choice of an unexceptional face.}

We give now the lemma, which associates to every coordinate axis in $\mathbb{R}^3$ a suitable unexceptional face of $f$. It turns out to be the main tool in the proof of part $2^0$ of the main result.   

\begin{lemma}[About the choice of an unexceptional face.]\label{wybor} Let $f\in\mathcal{O}^3$ be an isolated singularity such that $\Gamma^2(f)\setminus E_f\neq \emptyset.$ Then for every axis $OX_i,\,i=1,2,3,$ there exists a face $S_i\in\Gamma^2(f)\setminus E_f$ such that at least one of the two conditions is true:
\vspace{0.1cm}
\\$\boldsymbol{i}$) there exists a point $W \in OX_i,$ which is a vertex of the face $S_i,$
\\$\boldsymbol{ii}$) there exist $j,k\in\{1,2,3\}\setminus \{i\},\,j \neq k$ and vertices: $W \in OX_iX_j$ such that its distance to the axis $OX_i$ is equal to $1$ and $Y \in OX_iX_k$ such that segment $\overline{WY}$ is an edge of the face $S_i.$ 
\end{lemma}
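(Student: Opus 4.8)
The plan is to fix the axis and exploit the very concrete shape of exceptional faces in dimension three, together with the facts that an isolated singularity meets every coordinate plane (Corollary \ref{n18}) and is nearly convenient. Since all the notions involved are symmetric under permutations of the coordinates, I may fix $i=3$ and look for an unexceptional face $S_3$ lying over the axis $OX_3$.

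First I would record the precise shape of an exceptional face with respect to $OX_3$. For $n=3$ the $(n-2)$-dimensional face in the definition is an edge, so such a face is a triangle $T$ whose apex $A$ is at distance $1$ from $OX_3$ (hence $A=(1,0,a_3)$ or $A=(0,1,a_3)$) and whose opposite edge lies in one of the two coordinate planes $\{x_1=0\}$, $\{x_2=0\}$ containing $OX_3$; moreover the apex necessarily lies in the other of these two planes, since otherwise $T$ would be contained in a single coordinate plane and could not be two-dimensional. The point of this description is that each of the two edges of $T$ issuing from the apex joins a point at distance $1$ from $OX_3$ in one plane through $OX_3$ to a point of the other plane through $OX_3$; that is, every exceptional face carries exactly two edges of the "straddling" type demanded in condition $\boldsymbol{ii}$), and each such edge, not being contained in any coordinate plane, is shared with a uniquely determined neighbouring two-dimensional face.

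Next I would introduce the class $\mathcal{N}$ of all faces $S\in\Gamma^2(f)$ satisfying $\boldsymbol{i}$) or $\boldsymbol{ii}$) with respect to $OX_3$, momentarily dropping the requirement $S\notin E_f$, and show $\mathcal{N}\neq\emptyset$. Here I use near convenience together with Corollary \ref{n18}, which guarantees that the restricted Newton boundaries $\Gamma(f|_{\{x_1=0\}})$ and $\Gamma(f|_{\{x_2=0\}})$ are nonempty paths (by Property \ref{w4}) approaching $OX_3$ to within distance $1$. If $\Gamma_+(f)$ touches $OX_3$, then a vertex $(0,0,c)$ of $\Gamma(f)$ lies on the axis and any two-dimensional face through it is of type $\boldsymbol{i}$); otherwise the two paths approach $OX_3$ at distance exactly $1$ and produce a straddling edge, whose incident face is of type $\boldsymbol{ii}$). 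Hence $\mathcal{N}\neq\emptyset$.

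Finally, and this is the heart of the matter, I would show that $\mathcal{N}$ contains an unexceptional face. Starting from any member of $\mathcal{N}$: if it is unexceptional we are done; if it is exceptional, crossing either of its two straddling edges yields, by the sharing property established above, another face of $\mathcal{N}$. The hard part will be showing that this passage cannot circulate forever among exceptional faces. I expect to prove that consecutive exceptional faces of $\mathcal{N}$ organise into fans issuing from a common distance-$1$ apex over consecutive edges of one of the two finite restricted boundaries $\Gamma(f|_{\{x_1=0\}})$, $\Gamma(f|_{\{x_2=0\}})$, and that passing from one fan to the next advances monotonically along these paths. Since the paths are finite and, by hypothesis, $\Gamma^2(f)\setminus E_f\neq\emptyset$ forbids the relevant part of the Newton boundary from consisting entirely of exceptional triangles, the chain must terminate at an unexceptional face of $\mathcal{N}$, which is then the desired $S_3$. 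Ruling out cycles and verifying that the chain genuinely reaches an unexceptional face, rather than escaping the near-axis region, is the delicate geometric bookkeeping that I expect to be the main obstacle.
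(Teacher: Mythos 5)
Your plan is genuinely different from the paper's (the paper runs a finite case analysis on which near-axis vertices exist, powered by Property \ref{n32}, Proposition \ref{k2} and Lemmas \ref{k3}--\ref{k4}, not a walk across edges), but as it stands it has a real gap --- in fact two. The structural claim on which the walk rests, that a straddling edge of an exceptional face, ``not being contained in any coordinate plane, is shared with a uniquely determined neighbouring two-dimensional face,'' is false if ``two-dimensional face'' means an element of $\Gamma^2(f)$: an edge of the Newton polyhedron does lie on exactly two facets, but the second facet may be non-compact. Take $f=z_1^4+z_2^2+z_2z_3^3+z_1z_3^4$. This is an isolated singularity; $\Gamma^2(f)$ consists of $T=\conv\{(1,0,4),(0,1,3),(0,2,0)\}$, which is exceptional with respect to $OX_3$, and the unexceptional face $U=\conv\{(1,0,4),(0,2,0),(4,0,0)\}$, so the hypothesis $\Gamma^2(f)\setminus E_f\neq\emptyset$ holds. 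The straddling edge $\overline{(1,0,4)(0,1,3)}$ of $T$ lies on $T$ and on the \emph{non-compact} facet $\Delta((1,1,0),\Gamma_+(f))$ (the minimum of $x_1+x_2$ on $\Gamma_+(f)$ equals $1$ and is attained on this edge and on everything above it), so crossing it leaves $\Gamma^2(f)$ altogether, and your family $\mathcal{N}$ is not closed under the proposed move. (Here the walk happens to succeed through the other straddling edge $\overline{(1,0,4)(0,2,0)}$, which is shared with $U$; nothing in your argument guarantees such an escape route in general.) The same compactness issue affects your nonemptiness step: that some \emph{compact} two-dimensional face passes through the vertex on $OX_3$, or through the straddling edge produced by near convenience, is asserted rather than proved.

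Second, and more importantly, what you yourself call the heart of the matter --- that the walk cannot circulate forever among exceptional faces and must terminate at an unexceptional face --- is only announced (``I expect to prove\dots'', ``delicate geometric bookkeeping''), not proved. This termination statement is exactly where the hypothesis $\Gamma^2(f)\setminus E_f\neq\emptyset$ has to enter, and it is what the paper establishes by different means: the exceptional faces with respect to $OX_i$ sharing a distance-$1$ apex $W$ form a pencil, and for the extreme vertex $Y$ of that pencil the segment $\overline{WY}$ is either an edge of an unexceptional face or belongs to $\mathcal{J}$ (Lemma \ref{k3}); Proposition \ref{k2} then excludes the second alternative precisely because $\Gamma^2(f)\setminus E_f\neq\emptyset$ (Lemma \ref{k4}). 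Your monotone-fan picture is plausible, but proving it --- including handling fans that end against non-compact facets, as in the example above --- amounts to re-proving Lemma \ref{k3}; until that is done, the proposal does not establish the lemma.
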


Before we pass to the proof we give some properties, lemmas and auxiliary facts. We begin with a simple property of the vertices of the Newton boundary. 

\begin{property}
\label{k1} Let $f\in\mathcal{O}^n$ and $A\in\Gamma^0(f).$ Then $(A+\mathbb{R}^n_+)\cap\Gamma^0(f)=\{A\}.$
 \end{property}
 
\begin{proof}
 Suppose to the contrary there exists an vertex $B\in (A+\mathbb{R}^n_+)\cap\Gamma^0(f),\,B\neq A.$ Hence $B=A+x_0,\,x_0\in \mathbb{R}^n_+\setminus \{0\}.$ Because $B\in\Gamma^0(f),$ so there exists  $u\in \mathbb{R}^n_+\setminus \{0\}$ and a supporting hyperplane $L: \langle u,x\rangle=l(u,\Gamma_+(f))>0,$ to $\Gamma_+(f)$ in the point $B.$ Denote $l:=l(u,\Gamma_+(f)).$ Hence $L\cap \Gamma_+(f)=\{B\}$ and $\langle u,x\rangle >l$ for $x\in \Gamma_+(f)\setminus \{B\}.$ In  particular $\langle u,A\rangle >l.$ On the other hand
$$l=\langle u,B\rangle=\langle u,A+x_0\rangle=\langle u,A\rangle+\langle u,x_0\rangle\geq\langle u,A\rangle,$$ a contradiction. 
\end{proof}

The following property says that segments joining vertices, which lie "properly near" to the coordinate axes are edges. Denote by $x_i(A)$ the $i$-coordinate of the point $A\in\mathbb{R}^n.$ 

\begin{property}
\label{n32} Let $f\in\mathcal{O}^3$ be a singularity and $\{i,j,k\}$ be a permutation of the set $\{1,2,3\}.$ Suppose that there exists a point $W\in\Gamma^0(f)\cap OX_iX_k$ at distance $1$ to the axis $OX_i$ and $\Gamma^0(f)\cap OX_iX_j\neq \emptyset.$ If $Y\in \Gamma^0(f)\cap OX_iX_j$ is the point with the smallest distance to the axis $OX_i,$ then $\overline{WY}\in\Gamma^1(f).$
\end{property} 

\begin{proof} Without loss of generality we may assume that $i=1,\,j=2,\,k=3.$ If $Y\in OX_1,$ then $\overline{WY}\subset OX_1X_3$ and by Property \ref{w4} $\overline{WY}\in \Gamma^1(g)\subset\Gamma^1(f),$ where $g=f|_{\{z_2=0\}}.$ If $Y\not\in OX_1,$ then to get assertion it suffices to find a supporting plane to  $\Gamma_+(f)$ on the segment $\overline{WY}.$ To this end we first observe that planes going through $\overline{WY}$ can intersect the axis $OX_1$ arbitrary far away. Therefore we can choose a vector $u$ and plane $L: \langle u,x\rangle=b>0$ going through $\overline{WY}$ such, that: $1<x_3(L)<2$ and $x_2(Y)<x_2(L)<x_2(Y)+1$ and $P=(0,1,1)$ lie above $L$ (see Fig. \ref{lem}). Hence and since the points of $\supp f$ have integral every coordinate, then $L\cap\Gamma_+(f)=\overline{WY}$ and there is no points of $\supp f$ below plane $L.$ So $\langle u,x\rangle >b$ for every $x\in \Gamma_+(f)\setminus \overline{WY}.$ Summing up $b=l(u,\Gamma_+(f))$ and $L$ is a supporting plane to the edge $\overline{WY}.$ It finishes the proof.
\end{proof}  

\begin{figure}[!ht]
\begin{center}
 \includegraphics{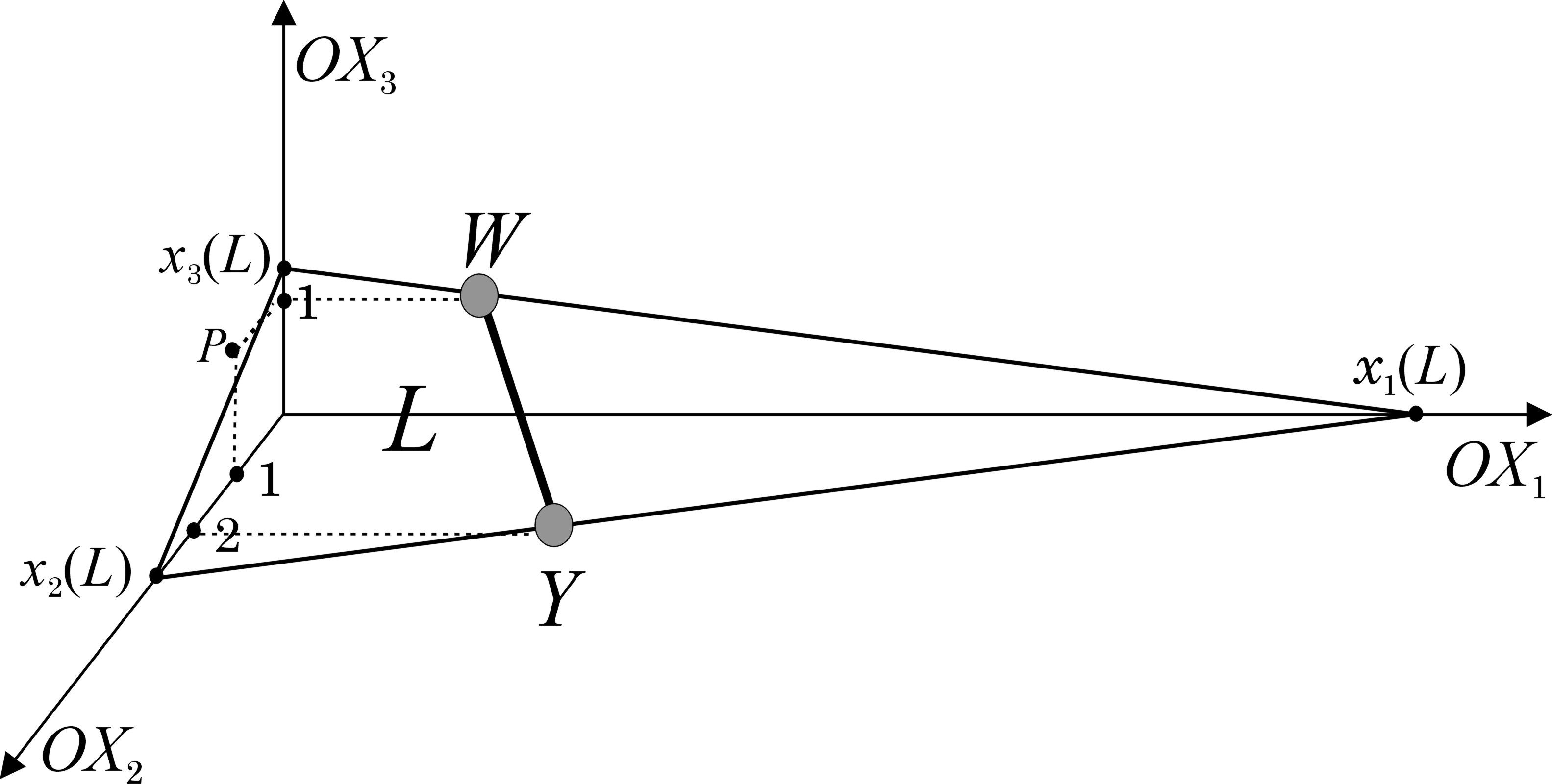} 
  \caption{Plane $L$ supports $\Gamma_+(f)$ on the segment $\overline{WY}.$} \label{lem}
  \end{center}
\end{figure} 

Remind that we have already defined family $\mathcal J=\{I_j^k\colon j=1,2,3,\,k=2,3,\ldots\}.$

\begin{proposition}
\label{k2} Let $f\in\mathcal{O}^3.$ Then the set $\mathcal{J}\cap\Gamma^1(f)$ is empty or consists of one element. Moreover  if $\mathcal{J}\cap\Gamma^1(f)\neq \emptyset,$ then either $\Gamma^2(f)=\emptyset$ or $\Gamma^2(f)=E_f\neq\emptyset.$ 
\end{proposition}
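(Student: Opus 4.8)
The plan is to treat the two assertions separately and to use the cyclic symmetry of the family $\mathcal{J}$ under permutation of the coordinates, so that after relabelling it suffices to analyse the segment $I_1^k=\overline{(0,1,1)(k,0,0)}$. Throughout I would fix a primitive supporting vector $u=(u_1,u_2,u_3)$ with positive coordinates realizing $\Delta(u,\Gamma_+(f))=I_1^k$; containment of the two endpoints forces the single relation $u_2+u_3=ku_1$ together with $l(u,\Gamma_+(f))=ku_1$, while every $\nu\in\supp f$ off the segment lies strictly above, i.e. $\langle u,\nu\rangle>ku_1$.

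For the first assertion, suppose two distinct members of $\mathcal{J}$ lie in $\Gamma^1(f)$. If they share the axis index, say $I_1^k$ and $I_1^{k'}$ with $k<k'$, then their endpoints $(k,0,0)$ and $(k',0,0)$ are both vertices of $\Gamma^0(f)$, and since $(k',0,0)\in(k,0,0)+\mathbb{R}^3_+$ this contradicts Property \ref{k1}. If the indices differ, say $I_1^k$ and $I_2^{k'}$, I would compute with both supporting vectors. Feeding the endpoints $(1,0,1),(0,k',0)$ of $I_2^{k'}$ into the strict inequality for $u$ gives $u_3>(k-1)u_1$ and $k'u_2>ku_1$, hence $u_2=ku_1-u_3<u_1$ and $u_2>(k/k')u_1$, which forces $k<k'$; the symmetric computation with the supporting vector of $I_2^{k'}$ forces $k'<k$, a contradiction. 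The remaining index pairs follow by the same symmetry. Thus $\mathcal{J}\cap\Gamma^1(f)$ is empty or a singleton.

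For the second assertion, assume $I_1^k\in\Gamma^1(f)$ and $\Gamma^2(f)\neq\emptyset$; I must show every $S\in\Gamma^2(f)$ is exceptional. The engine is a structural consequence of Property \ref{k1}. Since $(0,1,1)+\mathbb{R}^3_+=\{x\in\mathbb{R}^3_+:x_2\geq 1,\ x_3\geq 1\}$, no vertex of $\Gamma^0(f)$ other than $(0,1,1)$ can satisfy both $x_2\geq 1$ and $x_3\geq 1$; hence every vertex $V\neq(0,1,1)$ lies in $\{x_2=0\}\cup\{x_3=0\}$. Applying the necessary inequality $\langle u,(j,0,0)\rangle=ju_1\geq ku_1$ together with Property \ref{k1} at $(k,0,0)$ gives in addition $\Gamma^0(f)\cap OX_1=\{(k,0,0)\}$.

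Now fix $S\in\Gamma^2(f)$ with positive supporting vector $w$ and value $b$. By the structural fact, all vertices of $S$ other than $(0,1,1)$ lie in $\{x_2=0\}\cup\{x_3=0\}$; since a compact $2$-face cannot be contained in a single coordinate plane (its supporting vector would fail to be positive) and three vertices lying in one coordinate plane would be collinear, $S$ has at most two vertices in each plane. The heart of the argument — and the step I expect to be the main obstacle — is the convex-geometric dichotomy showing that $(0,1,1)$ must be a vertex of $S$ and that $S$ cannot run off the axis $OX_1$ into both planes simultaneously: if $S$ had a vertex in $\{x_2=0\}$ with $x_3\geq 1$ and another in $\{x_3=0\}$ with $x_2\geq 1$, then $(0,1,1)$ would fall into the relative interior of $S$ (already $\tfrac12(0,0,2)+\tfrac12(0,2,0)=(0,1,1)$) or $S$ would collapse into $\{x_1=0\}$, either way contradicting that $I_1^k$ is a face. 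I would make this precise using Property \ref{n32} and Property \ref{w4} applied to the restrictions $f|_{\{x_2=0\}}$ and $f|_{\{x_3=0\}}$. The dichotomy forces $S$ to be the triangle with apex $(0,1,1)$ and opposite edge lying in a single coordinate plane $\{x_2=0\}$ or $\{x_3=0\}$; as $(0,1,1)$ is at distance $1$ from the axis contained in that plane, $S$ is exceptional. Hence $\Gamma^2(f)=E_f\neq\emptyset$, which together with the alternative $\Gamma^2(f)=\emptyset$ completes the proof.
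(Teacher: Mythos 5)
Your proof of the first assertion is correct, complete, and takes a genuinely different route from the paper's. The paper never argues uniqueness by pairwise comparison of segments: it derives from Property \ref{k1} the structural fact that every vertex of $\Gamma^0(f)$ other than the apex of the given segment lies in the union of two coordinate planes, and reads off both assertions of the proposition from that description. Your direct computation --- eliminating $u_2$ via $u_2+u_3=ku_1$ against the strict inequalities at the endpoints of the second segment, then running the symmetric computation to get both $k<k'$ and $k'<k$ --- is self-contained and sound, including the reduction of the remaining index pairs by cyclic symmetry.

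The second assertion is where there is a genuine gap, and it sits exactly at the step you yourself flag as the main obstacle. What has to be proved is: (a) $(0,1,1)$ is a vertex of every $S\in\Gamma^2(f)$, and (b) no $S\in\Gamma^2(f)$ simultaneously has a vertex $P=(p_1,0,p_3)$ with $p_3\geq 1$ and a vertex $Q=(q_1,q_2,0)$ with $q_2\geq 1$. Your justification establishes neither: the identity $\frac12(0,0,2)+\frac12(0,2,0)=(0,1,1)$ treats one very special pair of points; for general $P,Q$ as above the apex is nowhere near $\conv\{P,Q\}$, so there is no reason it should land in the relative interior of $S$, nor does $S$ have any tendency to collapse into $\{x_1=0\}$ (it cannot even lie in a coordinate plane, as you note yourself). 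Properties \ref{n32} and \ref{w4} do not close this either: they manufacture certain edges of $\Gamma(f)$, but say nothing about which vertices a given $2$-face must contain. (The paper is admittedly laconic at the corresponding point, but the statement it asserts --- that every $2$-face is the join of the apex with points of a single coordinate plane --- is precisely (a)+(b); your proposal, as written, also asserts rather than proves it.)

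The claims are true, and they can be proved by exactly the supporting-vector mechanism you used in part one, pitting the normal $w>0$ of $S$ against the supporting vector $u$ of $I_1^k$. Suppose $S$ contains $P$ and $Q$ as in (b); let $c=l(w,\Gamma_+(f))$ and $s:=c/w_1$. Solving $\langle w,P\rangle=\langle w,Q\rangle=c$ for $w_2,w_3$ and using $\langle w,(0,1,1)\rangle\geq c$ and $\langle w,(k,0,0)\rangle\geq c$ gives $s\leq k$ and $h(s)\geq 0$, where $h(t):=\frac{t-q_1}{q_2}+\frac{t-p_1}{p_3}-t$; the vector $u$ (with $u_2+u_3=ku_1$ and strict inequalities at $P,Q$, neither of which lies on $I_1^k$) gives $h(k)<0$. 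Since $h$ is affine with slope $\bigl(1-(p_3-1)(q_2-1)\bigr)/(p_3q_2)$, either the slope is $\geq 0$ and then $0\leq h(s)\leq h(k)<0$, or it is $<0$ and then $h(s)\geq 0$ reads $s\bigl(1-(p_3-1)(q_2-1)\bigr)\geq q_2p_1+p_3q_1\geq 0$ with negative left-hand side; either way a contradiction, proving (b). A face avoiding the apex would have all its vertices in $\{x_2=0\}\cup\{x_3=0\}$ but not all in one of the two planes, hence would contain such a $P$ and $Q$, so (b) implies (a). With (a) and (b) in hand, your outline does finish (note also that the ``opposite edge'' of $S$ is a face of $\Gamma_+(f)$ because a face of a face is a face, which the definition of an exceptional face requires). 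Without an argument of this kind, the decisive geometric fact in your write-up remains unproved.
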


\begin{proof}
 Suppose that $\mathcal{J}\cap\Gamma^1(f)\neq \emptyset.$ Without loss of generality we may assume that $I_3^k\in\mathcal{J}\cap\Gamma^1(f)$ for some $k\in\{2,3,\ldots\}.$ Let $A=(1,1,0),\,B=(0,0,k)$ be vertices of the segment $I_3^k.$ By Lemma \ref{k1} $(A+\mathbb{R}^3_+)\cap\Gamma^0(f)=\{A\}.$ Since the vertices of $\Gamma^0(f)$ have integral coordinates, then $\Gamma^0(f)\setminus\{A\}\subset (OX_1X_3\cup OX_2X_3).$
\par If $\Gamma^0(f)=\{A,B\},$ then $\Gamma^2(f)=\emptyset$ and $\Gamma^1(f)=\{I^k_3\}.$ 
\par Otherwise $\Gamma^2(f)\neq\emptyset$ and joining point $A$ with points of $\Gamma^0(f)\cap OX_1X_3$ and with points of $\Gamma^0(f)\cap OX_2X_3$ we get each face of $\Gamma^2(f)$ and they are all exceptional. Observe that in this case we have $\Gamma^1(f)\cap\mathcal{J}=\{I^k_3\}.$ It finishes the proof. 
\end{proof}   

\begin{lemma}
\label{k3} Let $f\in\mathcal{O}^3$ be an isolated singularity and $\{i,j,k\}$ be a permutation of the set $\{1,2,3\}.$ Moreover let $\mathcal{S}$ be a nonempty family of the exceptional faces with respect to the axis $OX_i.$ Suppose that they all have common vertex $W\in OX_iX_j$ at distance $1$ to the axis $OX_i$ and their another vertices lie in the plane $OX_iX_k,$ and let vertex $Y$ be the one with the smallest distance to the axis $OX_k.$ Then the segment $\overline{WY}$ is either edge of some unexceptional face of $f$ or $\overline{WY}\in \mathcal{J}$ (see Fig. \ref{lem236}).  
\end{lemma}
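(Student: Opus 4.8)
The plan is to localize the problem to the single edge $\overline{WY}$ and the (at most two) two-dimensional faces of $\Gamma(f)$ meeting along it, and then to settle the resulting alternative by a finite case analysis driven by the positivity of face normals, the integrality of $\supp f$, and the convexity of $\Gamma_+(f)$. First I would normalize: by relabelling coordinates I may assume $(i,j,k)=(1,2,3)$, so that $W=(w_1,1,0)$ lies in $OX_1X_2$ at distance $1$ from $OX_1$, every ``other vertex'' lies in $OX_1X_3=\{x_2=0\}$, and $Y=(y_1,0,y_3)$ is the one minimizing $x_1$ (its distance to $OX_3$). Because $f$ is a singularity, $f(0)=0$ and $\nabla f(0)=0$, so no unit vector lies in $\supp f$ and hence none is a vertex; in particular $W\neq(0,1,0)$ forces $w_1\ge 1$, and $Y\neq(0,0,1)$. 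Finally $\overline{WY}$ is the lateral edge of the extreme face $S_0\in\mathcal S$ that has $Y$ as a base vertex, so $\overline{WY}\in\Gamma^1(f)$.

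Next I would show that $\overline{WY}$ lies in no coordinate plane, so that it is an interior edge shared by $S_0$ and exactly one further compact face $S'$ of $\Gamma(f)$. The planes $\{x_1=0\}$ and $\{x_2=0\}$ are excluded by $w_1\ge 1$ and $x_2(W)=1$, while $\{x_3=0\}$ is excluded once I prove $y_3>0$, i.e.\ that $Y$ is not on $OX_1$. This is the one genuinely nontrivial preliminary step: if $Y=(y_1,0,0)$, then writing $S_0=\triangle(W,Y,Z)$ and using that its primitive vector $u$ has positive coordinates, the relation $\langle u,Y\rangle=\langle u,Z\rangle$ forces $x_1(Z)<y_1$, contradicting the minimality of $Y$. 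With $S'$ produced, the lemma reduces to the alternative: either $S'$ is unexceptional, or $\overline{WY}\in\mathcal J$.

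Then I would assume $S'$ is exceptional; being exceptional it is a triangle $\triangle(W,Y,V)$, and I would run through the axis $OX_m$ and the choice of apex, reading off the distances of $W$ and $Y$ to the three axes and using integrality and the singularity constraint. Most branches die at once: exceptionality with respect to $OX_3$ is impossible, since neither $\overline{WY}$ nor any edge through $W$ lies in a plane containing $OX_3$; exceptionality with respect to $OX_1$ with apex $W$ would make $S'$ another such face on the wrong side of $\overline{WY}$, contradicting the minimality of $Y$; and exceptionality with respect to $OX_2$ is feasible only with apex $W=(1,1,0)$ and $Y=(0,0,y_3)$ on $OX_3$ (so $y_3\ge 2$), which is exactly $\overline{WY}=I_3^{y_3}\in\mathcal J$, the desired second alternative.

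I expect the real obstacle to be the last branch, $S'$ exceptional with respect to $OX_1$ with apex $Y$ (which forces $y_3=1$ and, by the positive-normal relation for $S_0$, pushes the far vertex $Z$ onto the $OX_1$-axis). Here both $W$ and $Y$ sit at distance $1$ from $OX_1$, so no elementary distance argument closes it; I would eliminate it by the convexity of $\Gamma_+(f)$. Concretely, for $S'$ to be a face distinct from $S_0$ its third vertex $V$ must lie strictly above the supporting plane of $S_0$, which translates into a strict inequality among $z_1=x_1(Z)$, $w_1$, $v_1$, $v_2$; on the other hand, the face of $\Gamma_+(f)$ closing the solid beyond the edges $\overline{YZ}$ and $\overline{YV}$ forces $W$ onto the inner side of its plane, giving exactly the opposite (non-strict) inequality, a contradiction. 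This rules out the branch and completes the analysis. As a closing remark I would note, via Proposition \ref{k2}, that the occurring case $\overline{WY}\in\mathcal J$ pins down the degenerate situation precisely as $\Gamma^2(f)=E_f$.
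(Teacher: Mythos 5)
Your argument breaks down at its foundational reduction, and the gap is not a repairable technicality. You claim that once $\overline{WY}$ lies in no coordinate plane, it is an ``interior'' edge shared by $S_0$ and exactly one further \emph{compact} two-dimensional face $S'$. This is false. The edge $\overline{WY}$, being a one-dimensional face of the three-dimensional polyhedron $\Gamma_+(f)$, does lie in exactly two $2$-faces of $\Gamma_+(f)$, but the second one may be non-compact: its primitive vector can have a zero coordinate, so that it contains rays parallel to a coordinate axis, while still containing the compact edge $\overline{WY}$. Concretely, take $f=z_1^2+z_1z_2+z_3^2$, an isolated singularity. Its only compact $2$-face is $S_0=\conv\{(2,0,0),(1,1,0),(0,0,2)\}$, which is exceptional with respect to $OX_1$ with $W=(1,1,0)$ and base $\overline{(2,0,0)(0,0,2)}\subset OX_1X_3$, so $\mathcal{S}=\{S_0\}$ and $Y=(0,0,2)$. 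Here $\overline{WY}$ lies in no coordinate plane, yet the second $2$-face of $\Gamma_+(f)$ along it is the non-compact face with primitive vector $(2,0,1)$; your $S'$ simply does not exist, and your case analysis, which exhausts only the possibilities for a compact exceptional $S'$, never touches this situation. Note also that even within your analysis, the exclusion of exceptionality with respect to $OX_3$ is incomplete: you rule out $W$ lying on a base edge, but not $W$ being the apex, which requires observing that the distance of $W=(w_1,1,0)$ to $OX_3$ is $\sqrt{w_1^2+1}>1$.

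The missing case is not marginal: it is precisely one of the two main sources of the alternative $\overline{WY}\in\mathcal{J}$, and the paper's proof devotes a separate case to it (``suppose that $\overline{WY}$ isn't edge of any other face $T\in\Gamma^2(f)$''). There the paper uses two ingredients absent from your proposal: nearly convenience of $\Gamma_+(f)$ (valid because $f$ is an isolated singularity), which forces $W=(1,1,0)$, and Corollary \ref{n18} (irreducibility of isolated singularities for $n\geq 3$, hence $\Gamma(f)\cap OX_2X_3\neq\emptyset$), which forces $Y\in OX_3$; together these give $\overline{WY}\in\mathcal{J}$. Without them the case cannot be closed --- in the example above nothing in your argument pins $W$ to $(1,1,0)$ or $Y$ to the axis. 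Your preliminary step showing $Y\notin OX_1$ via the positivity of the normal of $S_0$ is correct and is a nice touch (the paper leaves this implicit), and the branches you do treat when a second compact face exists run roughly parallel to the paper's second case, with the $OX_2$ branch correctly producing $\mathcal{J}$; but as written the proof is incomplete at the step on which everything else rests.
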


\begin{figure} [!ht]
\begin{center}
 \includegraphics{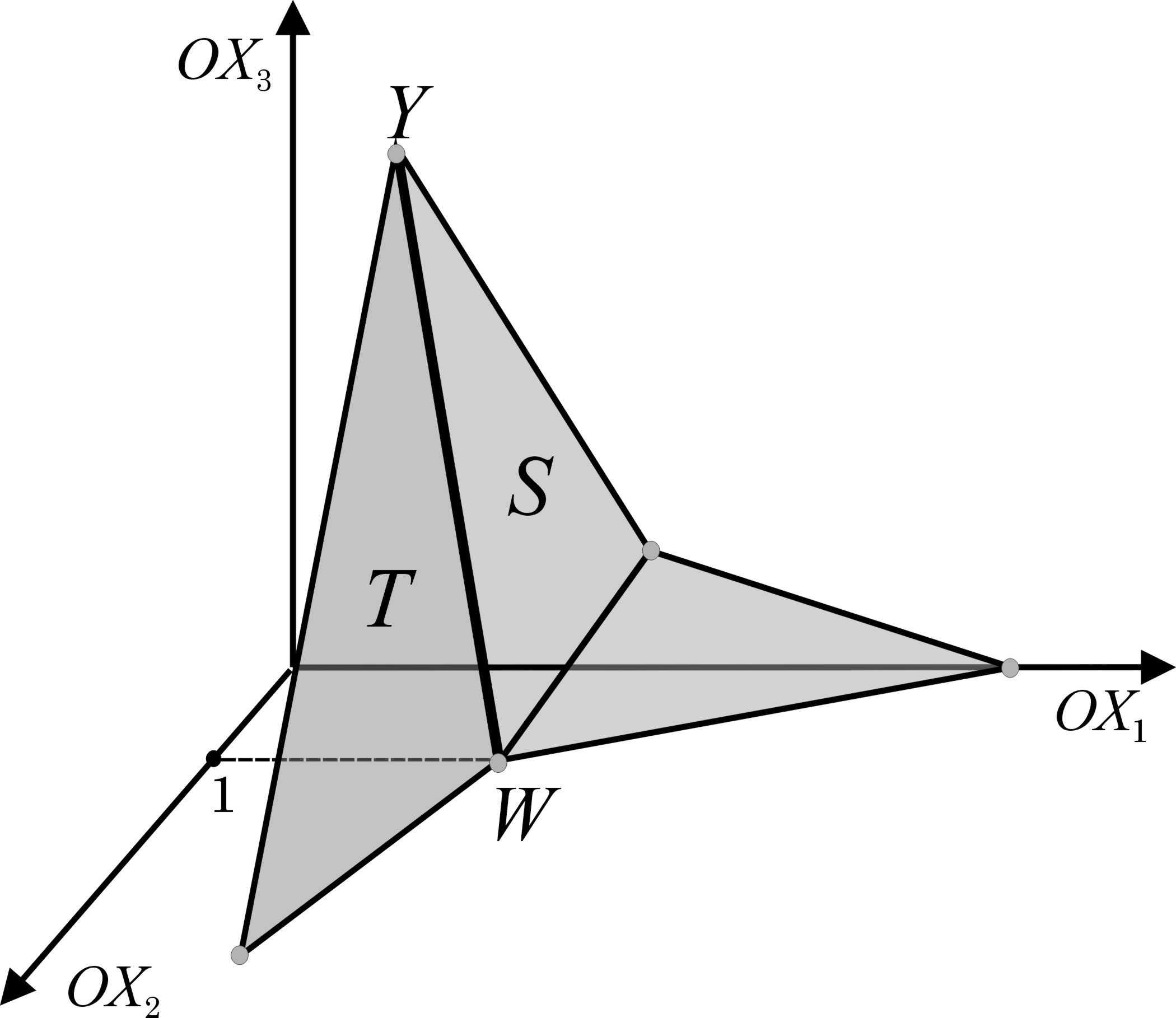} 
  \caption{$\overline{WY}$ is a common edge of an exceptional face $S$ and an unexceptional face $T.$}\label{lem236}
  \end{center}
\end{figure}

\begin{proof} Without loss of generality we may assume that $i=1,\,j=2,\,k=3.$ From the assumption the segment $\overline{WY}$ is edge of an exceptional face $S\in\mathcal{S}.$ 
\par Suppose that $\overline{WY}$ isn't edge of any other face $T\in\Gamma^2(f).$ In particular $\overline{WY}$ isn't edge of any unexceptional face. Then by nearly convenience of $\Gamma_+(f)$ we have $W=(1,1,0)$ and since $\Gamma(f)\cap OX_2X_3\neq\emptyset$ (see Corollary \ref{n18}), then $Y\in OX_3.$ Hence $\overline{WY}\in\mathcal{J}.$ 
\par Suppose now that segment $\overline{WY}$ is also an edge of a face $T\in\Gamma^2(f),\,T\neq S.$ Then by its definition we have that either $T\in\Gamma^2(f)\setminus E_f$ or $T$ is exceptional with respect to an axis different from $OX_1.$ If $T\in\Gamma^2(f)\setminus E_f,$ then we get the thesis. So  suppose that face $T$ is exceptional with respect to an axis different from $OX_1.$ Because $W\not \in OX_1X_3$ and $W\not \in OX_2X_3,$ so $\overline{WY}$ couldn't be edge of an exceptional face with respect to $OX_3.$ Therefore the face $T$ is exceptional with respect to the axis $OX_2.$ Because $\overline{WY}\not\subset OX_1X_2$ and $\overline{WY}\not\subset OX_2X_3,$ so one of the vertices ($W$ or $Y$) is at distance $1$ to the axis $OX_2.$ Because $Y\in OX_1X_3$ and $f$ is a singularity, so $Y$ can't be at distance $1$ to the axis $OX_2.$ Therefore $W$ is at distance $1$ to the axis $OX_2$ and moreover $Y\in OX_2X_3.$ Hence $W=(1,1,0)$ and $Y\in OX_3,$ vis $\overline{WY}\in\mathcal{J}.$ It finishes the proof.          
\end{proof}

Directly by Proposition \ref{k2} and Lemma \ref{k3} we get the following lemma, which turns out to be the key in the proof of the Lemma about the choice of an unexceptional face.

\begin{lemma}
\label{k4} Let $f\in\mathcal{O}^3$ be an isolated singularity such that $\Gamma^2(f)\setminus E_f\neq \emptyset.$ Moreover let $\{i,j,k\}$ be a permutation of the set $\{1,2,3\}$ and $\mathcal{S}$ be a nonempty family of exceptional faces with respect to the axis $OX_i.$ Suppose that they all have common vertex $W\in OX_iX_j$ at distance $1$ to the axis $OX_i$ and their another vertices lie in a plane $OX_iX_k,$ and let vertex $Y$ be the one with the smallest distance to the axis $OX_k.$ Then the segment $\overline{WY}$ is an edge of some unexceptional face of $f$ (see Fig. \ref{lem236}).  
\end{lemma}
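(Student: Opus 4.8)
The plan is to read Lemma~\ref{k4} as a sharpening of Lemma~\ref{k3}: the two statements share identical hypotheses on $\mathcal{S}$, $W$, $Y$, except that Lemma~\ref{k4} adds the global assumption $\Gamma^2(f)\setminus E_f\neq\emptyset$, and correspondingly drops the disjunct ``$\overline{WY}\in\mathcal{J}$'' from the conclusion. So I would not reprove anything geometric; instead I would invoke Lemma~\ref{k3} to obtain its dichotomy and then use the extra hypothesis, via Proposition~\ref{k2}, to kill the second alternative.

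Concretely, I would proceed as follows. First, observe that all the hypotheses of Lemma~\ref{k3} are in force (the family $\mathcal{S}$, the common vertex $W\in OX_iX_j$ at distance $1$ from $OX_i$, the other vertices in $OX_iX_k$, and the minimal vertex $Y$), so Lemma~\ref{k3} applies and yields that $\overline{WY}$ is \emph{either} an edge of some unexceptional face of $f$ \emph{or} $\overline{WY}\in\mathcal{J}$. Next, I would record that $\overline{WY}$ is genuinely a compact $1$-dimensional face of $f$: by assumption it is an edge of the exceptional face $S\in\mathcal{S}\subset\Gamma^2(f)$, hence $\overline{WY}\in\Gamma^1(f)$. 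Therefore, if the second alternative held, we would have $\overline{WY}\in\mathcal{J}\cap\Gamma^1(f)$, so in particular $\mathcal{J}\cap\Gamma^1(f)\neq\emptyset$. By Proposition~\ref{k2} this forces either $\Gamma^2(f)=\emptyset$ or $\Gamma^2(f)=E_f$. Both of these contradict the standing hypothesis $\Gamma^2(f)\setminus E_f\neq\emptyset$ (which says $\Gamma^2(f)$ is nonempty and contains an unexceptional face). Hence the second alternative is impossible, and $\overline{WY}$ must be an edge of some unexceptional face, as claimed.

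There is essentially no analytic or combinatorial obstacle here, since the substantive work is already done in Lemma~\ref{k3} and Proposition~\ref{k2}; the only point requiring a moment's care is the bookkeeping step of certifying that $\overline{WY}\in\Gamma^1(f)$ so that membership in $\mathcal{J}$ really places it in $\mathcal{J}\cap\Gamma^1(f)$, which is the set controlled by Proposition~\ref{k2}. Once that is noted, the argument is a one-line contrapositive application of Proposition~\ref{k2} against the hypothesis $\Gamma^2(f)\setminus E_f\neq\emptyset$.
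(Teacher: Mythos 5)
Your proposal is correct and matches the paper exactly: the paper gives no separate proof of Lemma~\ref{k4}, stating only that it follows \emph{directly} from Proposition~\ref{k2} and Lemma~\ref{k3}, which is precisely your argument (Lemma~\ref{k3} gives the dichotomy, and the alternative $\overline{WY}\in\mathcal{J}$ would put $\mathcal{J}\cap\Gamma^1(f)\neq\emptyset$, forcing $\Gamma^2(f)=\emptyset$ or $\Gamma^2(f)=E_f$ by Proposition~\ref{k2}, contradicting $\Gamma^2(f)\setminus E_f\neq\emptyset$). Your extra care in noting $\overline{WY}\in\Gamma^1(f)$, as an edge of an exceptional face in $\mathcal{S}$, is also consistent with the paper, whose proof of Lemma~\ref{k3} records this same fact at the outset.
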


Denote by $\#F$ the number of elements in a finite set $F.$ The following proposition says how does look like the Newton boundary of isolated singularities,  which have no $2$-dimensional faces. 

\begin{proposition}
\label{n61}Let $f\in\mathcal{O}^3$ be an isolated singularity. If $\Gamma^2(f)=\emptyset,$ then exists $I\in\mathcal{J},$ such that $\Gamma^1(f)=\{I\}.$ 
\end{proposition}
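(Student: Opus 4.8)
The plan is to show that the hypothesis $\Gamma^2(f)=\emptyset$ collapses the compact part of the Newton boundary to a single segment, and then to identify that segment as a member of $\mathcal{J}$ by a short case analysis on its endpoints. First I would record the standing reductions. Since $f$ is an isolated singularity in $\mathcal{O}^3$, it is irreducible by Property \ref{n17}, so no $z_i$ divides $f$; hence each restriction $f|_{\{z_i=0\}}$ is a nonzero germ and, by Corollary \ref{n18}, the boundary $\Gamma(f)$ meets every coordinate plane $\{x_i=0\}$ and in particular $\Gamma(f)\neq\emptyset$. As an isolated singularity, $\Gamma_+(f)$ is nearly convenient, i.e. its distance to every coordinate axis is at most $1$; moreover the nearest point of $\Gamma_+(f)$ to an axis is attained on a compact face, so the compact boundary itself comes within distance $1$ of each axis. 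Because $\Gamma^2(f)=\emptyset$, the union of compact faces is a connected $1$-complex. A single vertex $A$ is impossible, as it would force $\Gamma_+(f)=A+\mathbb{R}^3_+$, hence $z^A\mid f$, making $f$ reducible or non-singular; so there are at least two vertices, and since $\Gamma_+(f)\neq V+\mathbb{R}^3_+$ for each vertex $V$, every vertex is an endpoint of a compact edge. In particular $\Gamma^1(f)\neq\emptyset$.

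The heart of the argument is to prove that $\#\Gamma^0(f)=2$. Since no ray of the normal fan enters the open positive orthant (that would be a compact $2$-face), every compact edge lies inside a non-compact $2$-face, i.e. inside a ``wall'' whose primitive normal has a zero coordinate and which is therefore parallel to one of the axes; the compact edges inside a fixed wall form a convex staircase. I would then show, using near convenience together with Corollary \ref{n18}, that a chain of two or more such edges cannot occur: a staircase of several edges in a wall parallel to $OX_i$ pushes one of its vertices strictly beyond distance $1$ from $OX_i$, unless the wall is the minimal one $x_a+x_b=1$ or passes through an axis, and in those degenerate positions the whole diagram fails to meet some coordinate plane, contradicting Corollary \ref{n18}. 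Hence there is a single compact edge $I=\overline{WY}$, and its two endpoints are all the vertices.

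It remains to identify $I$. With $\Gamma(f)$ equal to the segment $I$, Corollary \ref{n18} forces, for each $i$, that $\min(x_i(W),x_i(Y))=0$; so every coordinate vanishes at one of the two endpoints. A short case analysis on how these zeros are distributed — using that $W,Y$ have degree at least $2$, are incomparable for the product order by Property \ref{k1}, and lie within distance $1$ of the axes — rules out both endpoints being interior, both carrying a single zero (which would miss a plane), and both lying on axes (which would push the edge past distance $1$ from the third axis). The only surviving configuration is that one endpoint lies on a coordinate axis, a point of type $(0,0,k)$, and the other lies at distance $1$ in the opposite coordinate plane, a point of type $(1,1,0)$; that is exactly $I=I_j^k\in\mathcal{J}$, and Proposition \ref{k2} confirms it is the unique element of $\mathcal{J}\cap\Gamma^1(f)$. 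The main obstacle is the second paragraph, the metric exclusion of multi-edge staircases: absence of a compact $2$-face alone permits such chains as abstract convex bodies, and it is precisely the conjunction of near convenience with Corollary \ref{n18} that eliminates them, so turning the ``staircase forces a far vertex'' intuition into a clean contradiction is the delicate step.
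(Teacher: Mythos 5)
Your opening reductions and your closing case analysis are fine, and the latter is in substance the paper's entire proof: the paper, too, identifies the segment by combining near convenience (a pigeonhole on the three sets of vertices lying on or at distance $1$ from the axes, which with only two vertices forces one endpoint to be of type $(1,1,0)$) with Corollary \ref{n18} (which forces the other endpoint onto an axis). The genuine gap is your second paragraph, the claim $\#\Gamma^0(f)=2$, which you never prove and yourself flag as ``the delicate step''. Concretely, three defects. First, the contradiction you aim at is misdirected: near convenience requires only that \emph{some} point of $\Gamma_+(f)$ lie within distance $1$ of each axis, so ``a staircase pushes one of its vertices strictly beyond distance $1$ from $OX_i$'' contradicts nothing --- in the very configuration you want to reach, the vertex $(0,0,k)$ lies far from $OX_1$ and $OX_2$. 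A correct per-wall argument must instead show that the wall's supporting inequality, say $u_2x_2+u_3x_3\ge l$ for a wall parallel to $OX_1$ with $u_2,u_3>0$, keeps \emph{all} of $\Gamma_+(f)$ at distance $>1$ from $OX_1$ unless $l\le\sqrt{u_2^2+u_3^2}$, and then bound the number of lattice points on the segment $\{u_2x_2+u_3x_3=l,\ x_2,x_3\ge0\}$ (at most $1+l/(u_2u_3)\le 1+\sqrt2<3$), so such a wall carries at most two vertices. Second, this argument breaks down for walls whose normal has two zero entries: the wall $\{x_3=0\}\cap\Gamma_+(f)$, for instance, contains by Property \ref{w4} the whole two-dimensional Newton boundary of $f|_{\{z_3=0\}}$, which may consist of several compact edges, so ``the staircase exclusion'' is simply false as stated for translated coordinate planes. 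Third, and most seriously, even granting that no wall contains two compact edges, your conclusion ``hence there is a single compact edge'' is a non sequitur: the compact faces form a connected chain $V_0-V_1-\cdots-V_k$, and consecutive edges may lie in \emph{different} walls, one edge per wall; your outline never excludes this cross-wall configuration, and eliminating it is exactly where the real work lies.

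For perspective: your instinct that this step requires proof is sound, because it is not a formal consequence of $\Gamma^2(f)=\emptyset$ alone. For the (non-isolated) singularity $f=z_3^2+z_1z_2z_3+z_1^3z_2^2$ one has $\Gamma^2(f)=\emptyset$ while $\Gamma^1(f)$ consists of the two compact edges $\overline{(0,0,2)(1,1,1)}$ and $\overline{(1,1,1)(3,2,0)}$; here near convenience fails, since every point of the diagram satisfies $x_2+x_3\ge 2$ and so stays at distance at least $\sqrt2$ from $OX_1$. The paper, however, disposes of this step by declaring it obvious (``of course $\Gamma^1(f)$ consists of only one segment'') and spends all its effort on the identification stage; so the part you could not close is precisely the part the paper asserts without argument, and your third paragraph duplicates what the paper actually proves. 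To finish along your lines you would need the quantitative wall estimate above, a separate treatment of walls that are translated coordinate planes, and an argument excluding chains whose edges lie in distinct walls.
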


\begin{proof} If $\Gamma^2(f)=\emptyset,$ then of course $\Gamma^1(f)$ consists of only one segment $I=\overline{AB}$ and $\Gamma^0(f)\subset I.$ Therefore $\Gamma^0(f)=\{A,B\}.$ Let $i\in\{1,2,3\}.$  Denote by $N_i$ the set of vertices, which lie on the axis $OX_i$ or be at distance $1$ to its. By nearly convenience of  $\Gamma_+(f)$ we get that $N_i\neq\emptyset.$ If $N_i$ are pairwise, then $\#\Gamma^0(f)\geq 3,$ a contradiction. Hence there exist $j,k\in\{1,2,3\},\,j\neq k$ such that $N_j\cap N_k\neq \emptyset.$ Without loss of generality we may assume that $j=1$ and $k=2.$ Let $W\in N_1\cap N_2.$ Because $f$ is a singularity, then $W\not\in OX_m,\,m=1,2,3.$ Hence $W=(1,1,0)\in\{A,B\}.$ Without loss of generality we may assume that $W=A.$ Then by Corollary \ref{n18} we get that $B\in OX_3.$ Since $f$ is a singularity, then $k:=x_3(B)\geq 2.$ Summing up $I=I^k_3\in \mathcal{J}.$ It finishes the proof.
\end{proof}

We give now a simple condition to decide, when all $2$-dimensional faces of the Newton boundary are exceptional or there is no any $2$-dimensional faces. It is a "border" case, in which the assumption $1^0$ of the main result is true. 

\begin{theorem}
\label{n19} Let $f\in\mathcal{O}^n$ be an isolated singularity. Then 
$$(\Gamma^2(f)=\emptyset \quad \mbox{or}\quad \Gamma^2(f)=E_f\neq\emptyset ) \Longleftrightarrow \Gamma^1(f)\cap \mathcal{J}\neq \emptyset.$$
\end{theorem}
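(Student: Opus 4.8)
The plan is to prove the two implications separately, assigning the easy one to Proposition \ref{k2} and reducing the hard one to Proposition \ref{n61} and Lemma \ref{k3}; throughout we work in $\mathbb{R}^3$, so that $\mathcal{J}$, $\Gamma^2(f)$ and $E_f$ are meaningful and $E_f\subset\Gamma^2(f)$. The implication ``$\Longleftarrow$'' requires nothing new: if $\Gamma^1(f)\cap\mathcal{J}\neq\emptyset$, then the second assertion of Proposition \ref{k2} states precisely that $\Gamma^2(f)=\emptyset$ or $\Gamma^2(f)=E_f\neq\emptyset$.

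For ``$\Longrightarrow$'' I would distinguish the two alternatives. If $\Gamma^2(f)=\emptyset$, then Proposition \ref{n61} yields a segment $I\in\mathcal{J}$ with $\Gamma^1(f)=\{I\}$, so $\Gamma^1(f)\cap\mathcal{J}\neq\emptyset$. The real content lies in the case $\Gamma^2(f)=E_f\neq\emptyset$, where every $2$-face is exceptional and, crucially, there is \emph{no} unexceptional face.

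In that case I would choose any $S_0\in\Gamma^2(f)=E_f$, exceptional with respect to some axis $OX_i$. By definition $S_0$ has a vertex $W$ at distance $1$ to $OX_i$ and its other vertices span an edge contained in a coordinate plane through $OX_i$; integrality of $W$ forces $W\in OX_iX_j$ or $W\in OX_iX_k$. The point to verify is that $W$ and that edge do not lie in a common coordinate plane: if they did, all vertices of $S_0$ would lie in some $\{x_m=0\}$, making the $2$-face $S_0$ lie in that plane, which is impossible since by Property \ref{w4} such a face would be a $2$-dimensional face of $f|_{\{z_m=0\}}\in\mathcal{O}^2$, whereas a germ in two variables has a Newton boundary of dimension at most $1$. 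After relabelling $j,k$ we may thus assume $W\in OX_iX_j$ at distance $1$ to $OX_i$, with the remaining vertices of $S_0$ in $OX_iX_k$. This is exactly the configuration of Lemma \ref{k3} with the singleton family $\mathcal{S}=\{S_0\}$: taking $Y$ to be the vertex of $S_0$ in $OX_iX_k$ nearest to $OX_k$, the segment $\overline{WY}$ is an edge of $S_0$, hence $\overline{WY}\in\Gamma^1(f)$, and Lemma \ref{k3} gives that $\overline{WY}$ is either an edge of an unexceptional face or lies in $\mathcal{J}$. As there are no unexceptional faces, $\overline{WY}\in\mathcal{J}$, so $\Gamma^1(f)\cap\mathcal{J}\neq\emptyset$.

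I expect the only delicate step to be the normalization in the last case: extracting from the bare definition of an exceptional face the exact hypotheses of Lemma \ref{k3}, and in particular checking that the distinguished vertex and the opposite edge sit in different coordinate planes so that $S_0$ is genuinely two-dimensional. Once this is settled, the absence of unexceptional faces makes Lemma \ref{k3} finish the argument at once.
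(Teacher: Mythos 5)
Your overall architecture coincides with the paper's: ``$\Longleftarrow$'' from Proposition \ref{k2}, and ``$\Longrightarrow$'' split into the case $\Gamma^2(f)=\emptyset$ (Proposition \ref{n61}) and the case $\Gamma^2(f)=E_f\neq\emptyset$ (Lemma \ref{k3}); your normalization step (that $W$ and the opposite edge cannot lie in a common coordinate plane, via Property \ref{w4}) is correct, and the paper indeed glosses over it. The genuine problem is your application of Lemma \ref{k3} to the singleton family $\mathcal{S}=\{S_0\}$. The dichotomy of that lemma is not sound for an arbitrary single exceptional face: both its truth and its proof depend on $Y$ being the vertex closest to $OX_k$ among the other vertices of \emph{all} exceptional faces with apex $W$ and opposite edges in $OX_iX_k$, because the argument examines the second $2$-face $T$ adjacent to the edge $\overline{WY}$ and must rule out that $T$ is yet another exceptional face with the same apex $W$. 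With a singleton family this cannot be ruled out, and then \emph{neither} alternative of the lemma's conclusion holds.

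Concretely, take $f=z_1z_2+z_1^4z_3+z_1^2z_3^2+z_1z_3^4+z_2^5+z_3^7$. One checks that $f$ is an isolated singularity and that its compact $2$-faces are $S=\conv\{(1,1,0),(4,0,1),(2,0,2)\}$, $T=\conv\{(1,1,0),(2,0,2),(1,0,4)\}$, $U=\conv\{(1,1,0),(1,0,4),(0,0,7)\}$ (all exceptional with respect to $OX_1$, with common apex $W=(1,1,0)$ and opposite edges in $OX_1X_3$) and $V=\conv\{(1,1,0),(0,5,0),(0,0,7)\}$ (exceptional with respect to $OX_2$); so $\Gamma^2(f)=E_f\neq\emptyset$, exactly the regime of your hard case. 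If you happen to choose $S_0=S$, your $Y$ is $(2,0,2)$, and $\overline{WY}=\overline{(1,1,0)(2,0,2)}$ is an edge shared by the two exceptional faces $S$ and $T$ and by no other face of $\Gamma^2(f)$; it is neither an edge of an unexceptional face nor an element of $\mathcal{J}$, since $(2,0,2)$ lies on no coordinate axis. So the statement you extract from Lemma \ref{k3} is false for this choice, and the proof breaks (the theorem itself survives here: $\Gamma^1(f)\cap\mathcal{J}=\{\overline{(1,1,0)(0,0,7)}\}=\{I_3^7\}$, obtained from $U$, not from $S$). This is precisely why the paper applies Lemma \ref{k3} to the family of \emph{all} exceptional faces with respect to the chosen axis (asserting they share the distinguished vertex $W$), which makes $Y$ minimal globally and excludes that the adjacent face $T$ is of the same exceptional type. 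To repair your argument you must do the same: take $\mathcal{S}$ to be all exceptional faces with apex $W$ and opposite edges in $OX_iX_k$, not just the one face $S_0$.
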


\begin{proof} 

\vspace{0.1cm}

\noindent "$\Rightarrow$". If $\Gamma^2(f)=\emptyset,$ then by Proposition \ref{n61} we get that $\Gamma^1(f)\cap \mathcal{J}\neq \emptyset.$ Suppose now that $\Gamma^2(f)=E_f\neq \emptyset.$ Let $S\in E_f.$ Without loss of genarality we may assume that $S$ is exceptional with respect to the axis $OX_3.$ Let $\mathcal{S}$ be family of all exceptional faces with respect to the axis $OX_3.$ There exists a vertex $W$ at distance $1$ to axis $OX_3$ which is a common vertex of this family. Without loss of genarality we may supposse that $W\in OX_1X_3$ and another vertices lie in plane $OX_2X_3$ and let edge $Y$ be the one with the smallest distance to the axis $OX_2.$ Then by Lemma \ref{k3} $\overline{WY}\in \mathcal{J}.$    

\vspace{0.1cm}

\noindent "$\Leftarrow$". It is a direct consequence of Proposition \ref{k2}.
\end{proof} 

\vspace{0.2cm}

We can now give the proof of Lemma about the choice of an unexceptional face. For convenience of a reader we repeat its once again.
\vspace{0.2cm}

\par\noindent\bf Lemma \ref{wybor}(\rm \bf About the choice of an unexceptional face.) \it Let $f\in\mathcal{O}^3$ be an isolated singularity such that $\Gamma^2(f)\setminus E_f\neq \emptyset.$ Then for every axis $OX_i,\,i=1,2,3,$ there exists a face $S_i\in\Gamma^2(f)\setminus E_f$ such that at least one of the two conditions is true:
\vspace{0.1cm}
\\$\boldsymbol{i}$) there exists a point $W \in OX_i,$ which is a vertex of the face $S_i,$
\\$\boldsymbol{ii}$) there exist $j,k\in\{1,2,3\}\setminus \{i\},\,j \neq k$ and vertices: $W \in OX_iX_j$ is at distance $1$ to the axis $OX_i$ and $Y \in OX_iX_k$ such that segment $\overline{WY}$ is an edge of the face $S_i.$  \rm
\vspace{0.2cm}

\begin{proof} Let $i\in\{1,2,3\}.$ Without loss of genarality we may assume that $i=3.$ By nearly convenience of $\Gamma_+(f)$ there exists a vertex, which lies on the axis $OX_3$ or at distance $1$ to it. If there exists a vertex, which lies on axis $OX_3,$ then we will denote it by $W_3.$ If there exists a vertex of the Newton boundary at distance $1$ to the axis $OX_3$ and which lies on the plane $OX_iX_3,$ then we will denote it by $W_i,\,i=1,2.$ We have the following cases.
\\$\mathbf{1^0}$ \rm 
There exists a vertex $W_3$ and there aren't vertices $W_1$ and $W_2.$ If $W_3$ is a vertex of some unexceptional face, then the condition i) is fulfilled for this face. Otherwise it is a vertex of some exceptional face $T.$ Since there aren't vertices $W_1$ and $W_2,$ then it is an exceptional face with respect to the axis $OX_1$ or $OX_2.$ Without loss of generality we may assume that it is exceptional with respect to $OX_1.$ Then there exists vertex $B\in OX_1X_2$ at distance $1$ to the axis $OX_1.$ By Lemma \ref{k4} the segment $\overline{BW_3}$ is an edge of some unexceptional face $S_3$, so the condition i) is fulfilled for this face (Fig. \ref{lemwyb}, $1^0$).

\begin{figure}[!ht]
\begin{center}
 \includegraphics{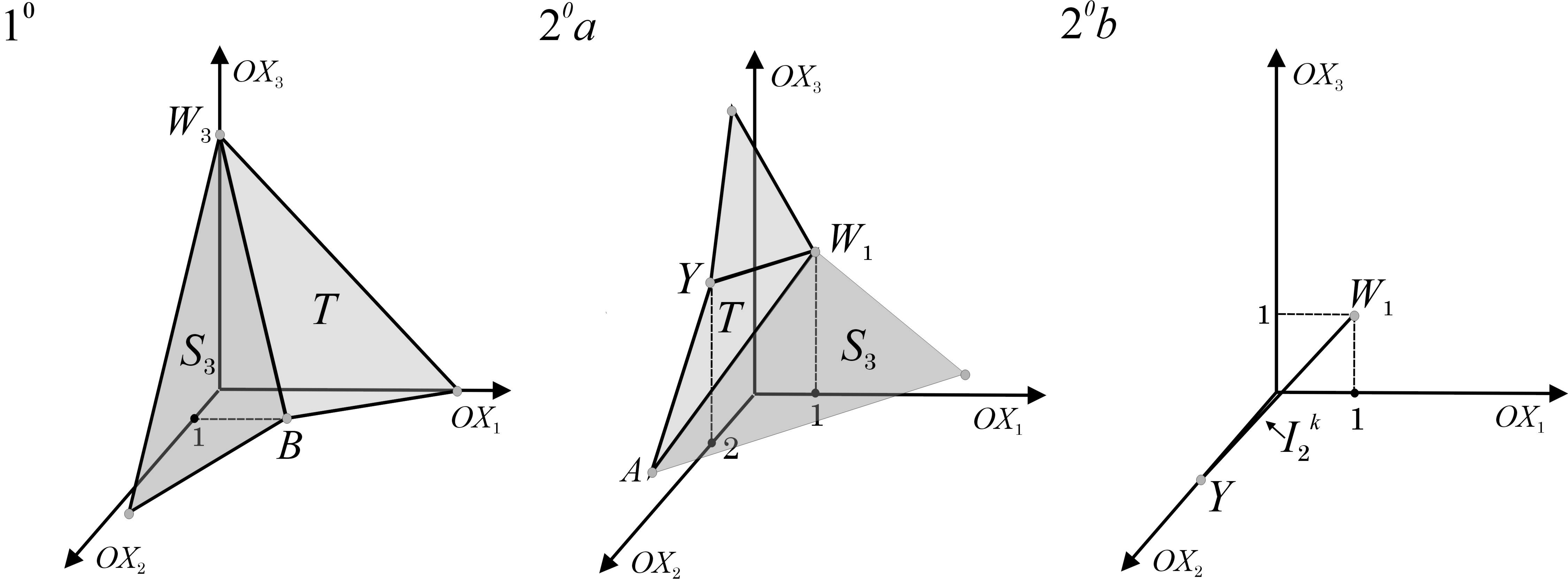} 
  \caption{Unexceptional faces satisfying i) or ii) and segment $I^k_2.$ }\label{lemwyb}
  \end{center}
\end{figure}

\par\noindent $\mathbf{2^0}$ \rm Suppose now that there exists vertex $W_1$ or $W_2.$ Without loss of generality we may assume that there exists vertex $W_1.$ By Corollary \ref{n18} we get that $\Gamma(f)\cap OX_2X_3\neq \emptyset.$ Let vertex $Y\in OX_2X_3\cap\Gamma^0(f)$ be the one with the smallest distance to the axis $OX_3.$ By Property \ref{n32} segment $\overline{W_1Y}\in\Gamma^1(f).$ If it is the edge of some unexceptional face, then the condition ii) is fulfilled for this face. Otherwise it is an edge of some exceptional face. We have the following cases.
\\\bf a) \rm $Y\not\in OX_2$ and $Y\not \in OX_3.$ Then the segment $\overline{W_1Y}$ can't be an edge of any exceptional face with respect to $OX_2$ or $OX_1.$ Therefore it is an edge of some exceptional face $T$ with respect to the axis $OX_3.$ 
\par If $x_2(Y)>1,$ then by Lemma \ref{k4} there exists a vertex $A\in OX_2X_3$ such that the segment $\overline{AW_1}$ is an edge of some unexceptional face $S_3$, so condition ii) is fulfilled for this face (Fig. \ref{lemwyb}, $2^0$a).
\par If $x_2(Y)=1,$ to $Y=W_2.$ Hence by Lemma \ref{k4} there exists $i\in\{1,2\}$ and a vertex $A_i\in OX_iX_3$ such that the segment $\overline{A_iW_{3-i}}$ is an edge of some unexceptional face, so condition ii) is fulfilled for this face. 
\\\bf b) \rm $Y\in OX_2.$ Since $Y$ is the one with the smallest distance to the axis $OX_3,$ then there aren't any other vertices on the plane $OX_2X_3.$ If the segment $\overline{W_1Y}$  is an edge of some unexceptional face, so condition ii) is fulfilled for this face. Otherwise the segment $\overline{W_1Y}$ is edge of some exceptional face with respect to $OX_1.$ Hence $W_1$ is at distance $1$ to the axis $OX_1.$ So $W_1=(1,0,1)$ and $\overline{W_1Y}=I^k_2,$ where $k=x_2(Y),$ which by Proposition \ref{k2} isn't possible (Fig. \ref{lemwyb}, $2^0$b).
\\\bf c) \rm $Y\in OX_3.$ Then $Y=W_3.$ If the segment $\overline{W_1W_3}$ is an edge of some unexceptional face, so condition ii) is fulfilled for this face. Otherwise the segment $\overline{W_1W_3}$ is an edge of some exceptional face with respect to the axis $OX_1$ or $OX_3,$ because it can't be an edge of any exceptional face with respect to the axis $OX_2.$
\par If it is an edge of exceptional face with respect to $OX_1,$ then there exists a vertex $B\in OX_1X_2$ at distance $1$ to the  axis $OX_1.$ By Lemma \ref{k4} the segment $\overline{BW_3}$ is an edge of some unexceptional face, so condition i) is fulfilled for this face.
\par If it is an edge of exceptional face with respect to $OX_3,$ then there exists a vertex $W_2\in OX_2X_3$ at distance $1$ to the axis $OX_3.$ Hence by Lemma \ref{k4} there exists $i\in\{1,2\}$ and a vertex $A_i\in OX_iX_3$ such that the segment $\overline{A_iW_{3-i}}$ is an edge of some unexceptional face. Therefore condition ii) is fulfilled for this face. That concludes the proof.
\end{proof}


\section{The proof of the main result.}

We go now to the proof of the main result. For the convenience of the reader we repeat it once again.

\vspace{0.2cm}

\par\noindent\bf Theorem \ref{n2} \rm \it Let $f:\left( \mathbb {C}^3,0\right)\longrightarrow \left( \mathbb {C},0\right)$ be an isolated and nondegenerate singularity.  
\vspace{0.15cm}

\par\noindent $1^0$ If $\Gamma^{2}(f)=\emptyset$ or $\Gamma^{2}(f)=E_f,$ then there exists exactly one segment  $I\in\mathcal{J}\cap\Gamma^{1}(f)$ and $${\pounds}_0(f)=m(I)-1.$$
\par \noindent $2^0$ If $\Gamma^2(f)\setminus E_f\neq\emptyset,$ then \begin{equation} {\pounds}_0(f)\leq\max _{S\in\Gamma^{2}(f)
 \setminus E_f} m(S)-1. \end{equation} 
\vspace{0.2cm} \rm
 
\begin{proof}
\\$\mathbf 1^0.$ 
If $\Gamma^{2}(f)=\emptyset$ or $\Gamma^{2}(f)=E_f\neq \emptyset,$ then by Theorem \ref{n19} and Proposition \ref{k2} there exists exactly one segment $I\in\Gamma^1(f)\cap \mathcal{J}.$ Without loss of generality we can assume that $I=I_3^k$ for some $k\in\{2,3,\ldots\}.$ Then $(1,1,0)\in\supp (f)$ and hence we can find a monomial of the form $az_1z_2,\,a\neq 0$ in the expansion of $f.$ Therefore $f''_{z_1z_2}(0)= a \neq 0.$ Observe that $(2,0,0)\not\in \supp (f)$ or $(0,2,0)\not\in \supp (f).$ Otherwise the point $(1,1,0)$ would be in the interior of the segment $\overline{(2,0,0)(0,2,0)},$ which would contradict that $(1,1,0)\in\Gamma^0(f).$ So $f''_{z_1z_1}(0)=0$ or $f''_{z_2z_2}(0)=0.$ Summing up
$$\det\left[\frac{\partial^2 f}{\partial z_i\partial z_j}\right]_{i,j=1}^2(0)\neq 0,\quad \mbox{thus}\quad \rank\left[\frac{\partial^2 f}{\partial z_i\partial z_j}\right]_{i,j=1}^3(0)\geq 2.$$ 
Then by Lemma \ref{milnor} we get that $\pounds_0(f)=\mu_0(f).$ We have the following cases.
\\\bf a) \rm The Newton diagram $\Gamma_+(f)$ is convenient. Then $\Gamma^{2}(f)\neq\emptyset,$ so by the assumption we have that $\Gamma^2(f)=E_f.$ Since $f$ is nondegenerate, then by Kouchnirenko theorem (\cite{K}, Thm. I) the Milnor number $\mu_0(f)$ is equal to the Newton number $\nu (f).$ By definition $$\nu (f)=3! V_3-2! V_2+V_1-1,$$ where $V_3$ is the volume of the set $\mathbb{R}_{+}^3\setminus \inter(\Gamma_+(f)),$ and $V_k,\,k=1,2$ are $k$-dimensional Lebesgue measures of the intersection of this set and sum of linear subspaces of dimension $k$ spanned by the coordinate axes. 

\begin{figure}[!ht]
\begin{center}
 \includegraphics{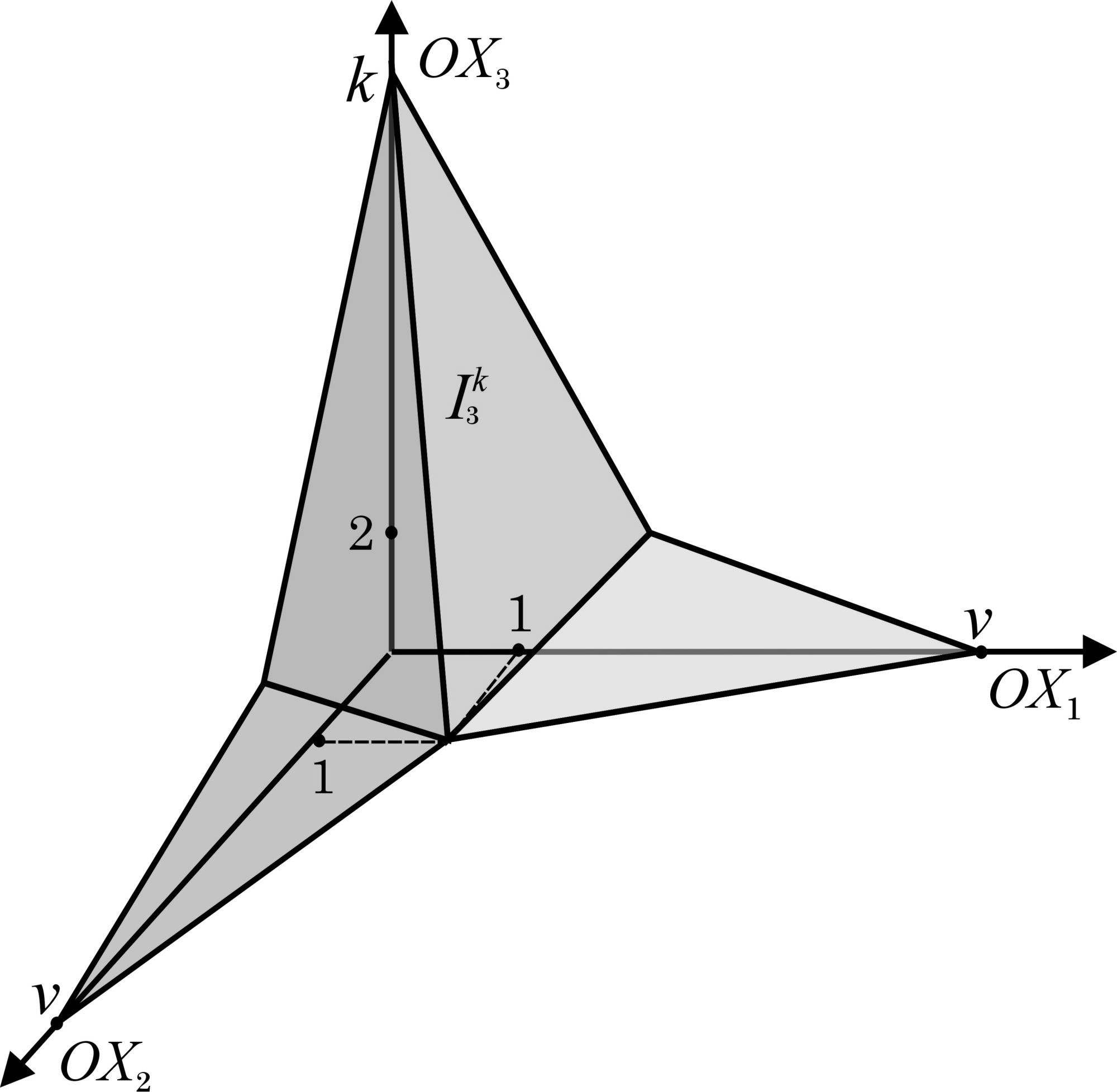} 
  \caption{$\Gamma^2(f)=E_f$ and $I^k_3\in\Gamma^1(f).$}\label{kusznirenko}
  \end{center}
\end{figure}



 It is not difficult to check that $\nu(f)=k-1=m(I)-1$ (see Fig. \ref{kusznirenko}).
Summing up we get $$\pounds_0(f)=\mu_0(f)=\nu(f)=m(I)-1,$$ which finishes the proof in this case.
\\\bf b) \rm If $\Gamma_+(f)$ isn't convenient, then we deform $f$ to get an isolated singularity, which has convenient the Newton diagram. To that end we define new singularity $$g(z_1,z_2,z_3):=f(z_1,z_2,z_3)+\alpha_1z_1^v+\alpha_2z_2^v,$$ with $\alpha_i=0,$ if $\Gamma_+(f)\cap OX_i\neq \emptyset$ or else $\alpha_i=1,$  $i=1,2.$ We choose a number $v\in\{2,3\ldots\}$ to fulfill the following conditions : 
\\i) $\ord(\nabla g-\nabla f)>\pounds_0(f),$  
\\ii) if $E_f\neq \emptyset,$ then $v>\max_{S\in E_f}m(S),$
\\iii) if $\Gamma^2(f)= \emptyset,$ then $v>m(I).$ 
\par\noindent Because $(0,0,m(I))\in\supp(f),$ so by definition of $g$ we get that $\Gamma_+(g)$ is convenient. Since $\ord(\nabla g-\nabla f)>\pounds_0(f),$ then by Lemma 1.4 in \cite{P} we have that $g$ is an isolated singularity and 
\begin{equation} \label{n22} \pounds_0(g)=\pounds_0(f). \end{equation}
\par Moreover $\Gamma(g)=\Gamma(f)\cup T,$ where $T$ is the set of such faces $S\in\Gamma(g)\setminus\Gamma(f)$ that $(v,0,0)\in S,$ (if $\alpha_1=1$) or $(0,v,0)\in S,$ (if $\alpha_2=1$). Observe that two-dimensional faces of the set $T$ are exceptional faces of $g.$ Hence and since $\Gamma^2(f)=\emptyset$ or $\Gamma^2(f)=E_f$ we get that $\Gamma^2(g)=E_g.$ It is easy to check that for every face $S\in T$ there exists $i\in\{1,2,3\}$ such that $(f_S)_{z_i}$ is a monomial. Therefore $g$ is nondegenarate on every face $S\in T.$ Then by nondegeneracy of $f$ and by the equality $\Gamma(g)=\Gamma(f)\cup T$ we get that $g$ is nondegenerate. Hence by proof of the case a) used for $g$ we have that $\pounds_0(g)=m(I)-1.$ Summing up by (\ref{n22}) we get that $$\pounds_0(f)=\pounds_0(g)=m(I)-1.$$ It finishes the proof in this case.
\vspace{0.2cm}

\noindent $\mathbf 2^0.$ If $\Gamma^2(f)\setminus E_f\neq\emptyset,$ then by Lemma about the choice of an unexceptional face we choose the face $S_i\in\Gamma^2(f)\setminus E_f$ for every axis $OX_i,\,i=1,2,3$ such that is fulfilled at least one of the two conditions:
\vspace{0.1cm}
\\ i)  there exists a point $W \in OX_i,$ which is a vertex of the face $S_i,$
\\ ii) there are exist $j,k\in\{1,2,3\}\setminus \{i\},\,j \neq k$ and vertices: $W \in OX_iX_j$ at distance $1$ to the axis $OX_i$ and $Y \in OX_iX_k$ such that the segment $\overline{WY}$ is an edge of the face $S_i.$ 
\par We show that $\pounds_0(f)\leq \max_{i=1}^3 m(S_i)-1.$ Suppose to the contrary that $$\pounds_0(f)>\max_{i=1}^3 m(S_i)-1.$$ By Property b) of the \L ojasiewicz exponent (p. \pageref{b}) there exists a parameterization $\phi=(\phi_i)_{i=1}^3\in\mathbf{C}\{t\}^3$ such that
\begin{equation}\label{n52} \frac{\ord(\nabla f\circ\phi)}{\ord\phi}>\max_{i=1}^3m(S_i)-1. \end{equation} 
We have the following cases.
\\\bf a) \rm $\phi_i\neq 0,\,i=1,2,3.$ Denote by $L$ the supporting hyperplane to $\Gamma_+(f)$ such that $L\bot(\ord\phi_i)_{i=1}^3.$ 
Then by Lemma \ref{n6} we get that 
$$\frac{\ord(\nabla f\circ\phi)}{\ord\phi}\leq m(L)-1.$$
This and inequality (\ref{n52}) shows that $m(L)>\max_{i=1}^3m(S_i).$ Without loss of generality we can assume that $m(L)=x_1(L).$ Hence we get that
\begin{equation}\label{p3} x_1(L)>m(S_1)\geq x_1(T), \end{equation} 
where $T$ is the supporting plane to the face $S_1.$ By the inequality (\ref{p3}) the condition i) for face $S_1$ isn't possible. Thus the condition ii) is fulfilled  for this face and without loss of generality we may assume that $j=3$ in this condition. Then there are vertices: $W \in OX_1X_3$ at distance $1$ to the axis $OX_1$ and $Y \in OX_1X_2$ such that the segment $\overline{WY}$ is the edge of the face $S_1$ (Fig. \ref{n60}). We show that there exists a plane $K\parallel L,$ which support $\Gamma_+(f'_{z_3})$ exactly in one point $W-1_3\in OX_1$ and $m(K)\leq m(S_1)-1.$

\begin{figure}[!ht]
\begin{center}
 \includegraphics{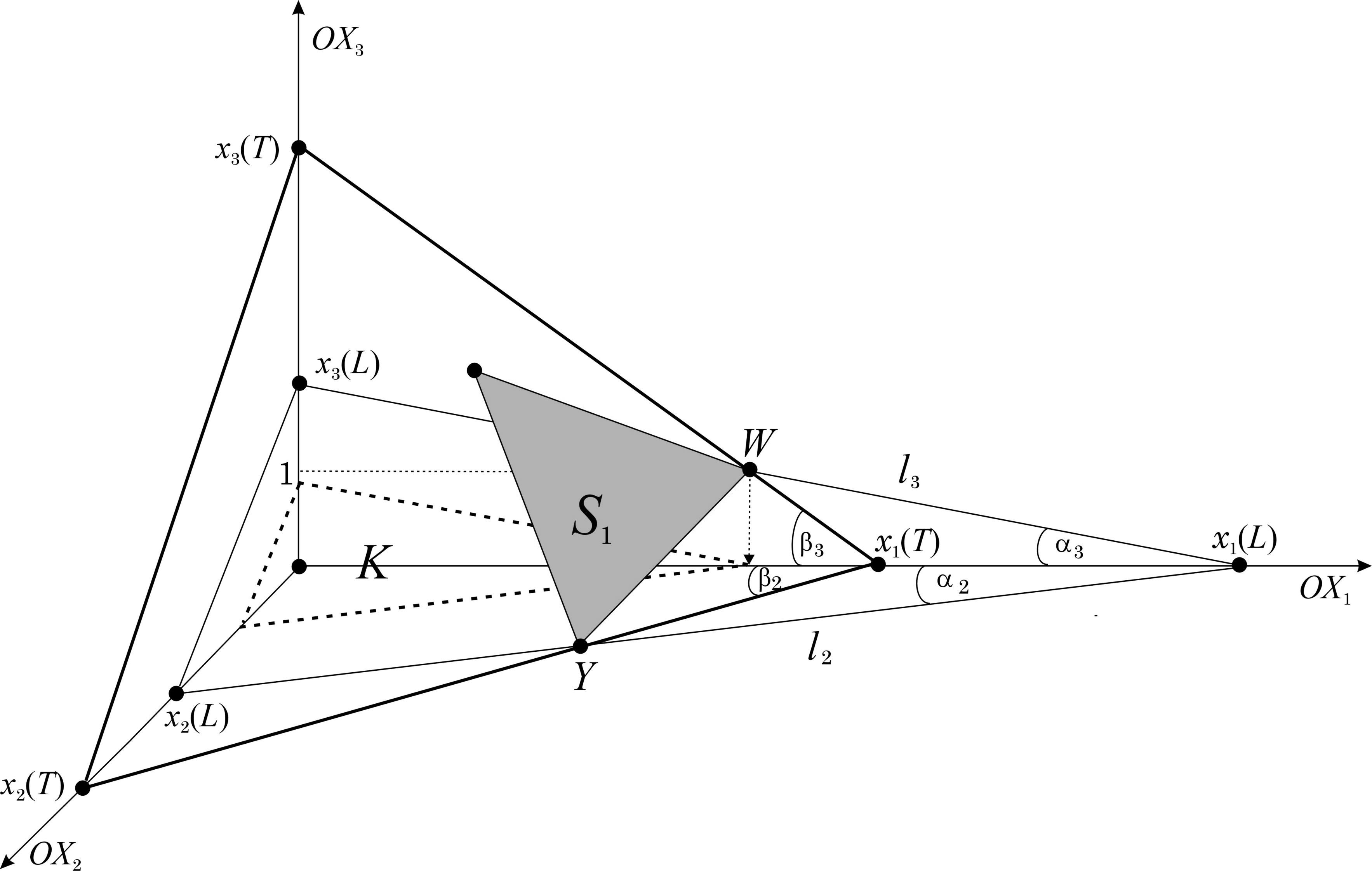} 
  \caption{An unexceptional face $S_1.$} \label{n60}
  \end{center}
\end{figure}

\par For $i=2,3$ we will denote by $l_i$ the line $L\cap OX_1X_i$ and by $\alpha_i$ the acute angle between the line $l_i$ and the axis $OX_1,$ and by $\beta_i$ the acute angle between the line $T\cap OX_1X_i$ and the axis $OX_1.$ Since $L$ is a supporting plane to $\Gamma_+(f),$ then $W$ lies on the line $l_3$ or above it and $Y$ lie on the line $l_2$ or above it. Hence and by (\ref{p3}) we get that $\alpha_i<\beta_i,\,i=2,3.$ Let now $K\parallel L,$ be the plane such that $W-1_3\in K.$ Since the set $\supp f$ lie on the plane $T$ or above it, then $\supp f'_{z_3}$ lie in the plane $T-1_3$ or above it. Because $\alpha_i<\beta_i,\,i=2,3$ and $K\parallel L,$ so $x_i(K)< x_i(T-1_3),\,i=2,3$ and the more $\supp f'_{z_3},$ besides the point $(W-1_3)\in K,$ lie above the plane $K.$ Therefore the plane $K$ supports $\Gamma_+(f'_{z_3})$ exactly in one point $W-1_3\in OX_1.$ Moreover $m(K)\leq m(T-1_3).$ This and Property \ref{n4'} shows that 
\begin{equation}\label{n59}m(K)\leq m(T-1_3)\leq m(T)-1=m(S_1)-1. \end{equation} 
Summing up by inequality (\ref{n59}) and by Property \ref{n8}b we have

$$\frac{\ord(\nabla f\circ\phi)}{\ord\phi}\leq \frac{\ord( f'_{z_3}\circ\phi)}{\ord \phi}=m(K)\leq m(S_1)-1,$$

\noindent which leads to a contradiction with inequality (\ref{n52}).
\vspace{0.1cm}

\par\noindent\bf b) \rm There exists $i\in\{1,2,3\}$ such that $\phi_i= 0.$ Without loss of generality we can assume that $i=1.$ Hence $\phi=(0,\phi_2,\phi_3).$ Denote $\phi_0=(\phi_2,\phi_3).$ We  represent the singularity $f$ in the form$$f(z_1,z_2,z_3)=g(z_2,z_3)+z_1h(z_2,z_3)+z_1^2h_2(z_1,z_2,z_3),\,\,\mbox{where}\,\,g,h\in\mathcal{O}^2,\,h_2\in\mathcal{O}^3.$$
Since $f$ is an isolated singularity, then $g\neq 0$ (see Property \ref{n17}) and thus $\Gamma(g)\neq \emptyset.$ Moreover $g(0)=h(0)=0,\,\nabla g(0)=0,$ or $g$ is a singularity (not necessarily isolated). Because $f$ is nondegenerate and $\Gamma(g)=\{S\in \Gamma(f):S\subset \{x_1=0\}\}$ (see Property \ref{w4}), so $g$ is nondegenerate. Summing up $g$ is nondegenarate singularity. 
\par Suppose first that $\phi_2=0$ and $\phi_3 \neq 0$ (the case $\phi_3=0$ and $\phi_2 \neq 0$ consider analogously). It is easy to observe that in each case i) or ii) by the choice of the face $S_3$ and the vertex $W,$ there exists $i\in\{1,2,3\}$ such that $W-1_i\in OX_3.$ Then from Property \ref{n4'} we have 
\begin{eqnarray} \nonumber  \frac{\ord(\nabla f\circ\phi)}{\ord\phi} & \leq & \frac{\ord(f'_{z_i}\circ\phi)}{\ord\phi}=\frac{x_3(W-1_i)\ord\phi_3}{\ord\phi_3}=x_3(W-1_i) \leq \\ & \leq &  m(T-1_i)\leq m(T)-1, 
\end{eqnarray} 
where $T$ is the supporting plane to the face $S_3.$ The last inequality contradicts the inequality (\ref{n52}). 
\par Now suppose  that $\phi_2\neq 0$ and $\phi_3\neq 0.$ Denote $w=(\ord\phi_2,\ord\phi_3).$ Consider the unique supporting line $l\subset OX_2X_3$ to $\Gamma_+(g)\subset OX_2X_3$ such that $w\bot l.$ Then by Lemma \ref{n6} we have 
$$\frac{\ord(\nabla f\circ\phi)}{\ord\phi}\leq \frac{\ord(f'_{z_2}\circ\phi,f'_{z_3}\circ\phi)}{\ord\phi}=\frac{\ord(\nabla g\circ\phi_0)}{\ord\phi_0} \leq m(l)-1.$$  
This and inequality (\ref{n52}) shows that $m(l)>\max_{i=1}^3m(S_i).$ Without loss of generality we may assume that $m(l)=x_3(l).$ Hence we obtain that
\begin{equation}\label{n53} x_3(l)>m(S_3)=m(T)\geq x_3(T). \end{equation} 
Then for the face $S_3$ the condition i) can't be true. So the condition ii) holds for it. Choose $j,k\in\{1,2\},\,j\neq k$ and vertices: $W_j \in OX_3X_j$ at distance $1$ from the axis $OX_1$ and $Y \in OX_3X_k$ such that the segment $\overline{W_jY}$ is an edge of the face $S_3.$ We shall show that there is a line $k_j\subset OX_2X_3,\,k_j\parallel l,$ which supports $\Gamma_+(f'_{z_j}(0,z_2,z_3))\subset OX_2X_3$ excatly in one point $W_j-1_j$ and $m(k_j)\leq m(S_3)-1.$

\begin{figure}[!ht]
\begin{center}
 \includegraphics{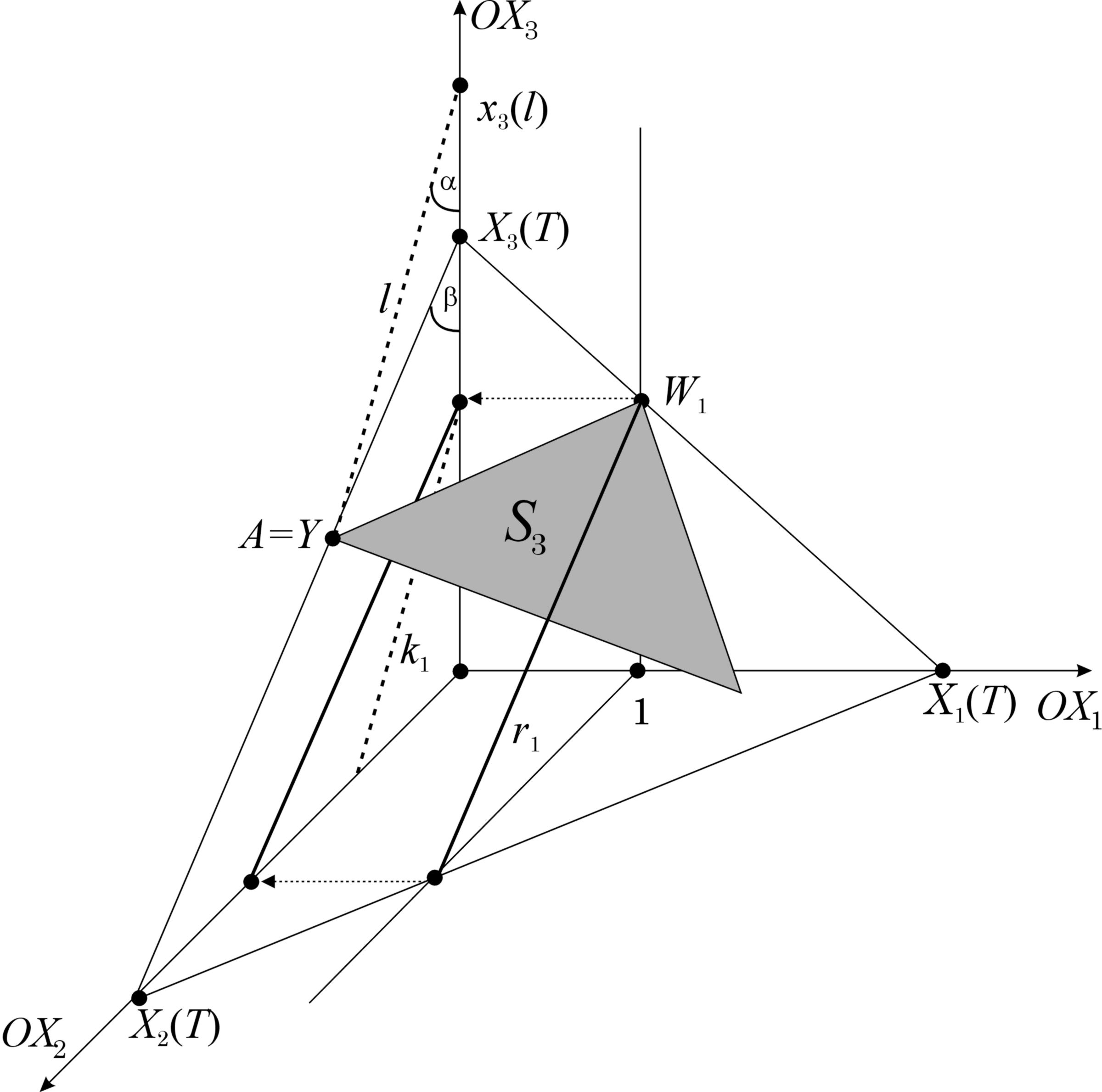} 
  \caption{$j=1$}\label{n54}
  \end{center}
\end{figure}

\begin{figure}[!ht]
\begin{center}
 \includegraphics{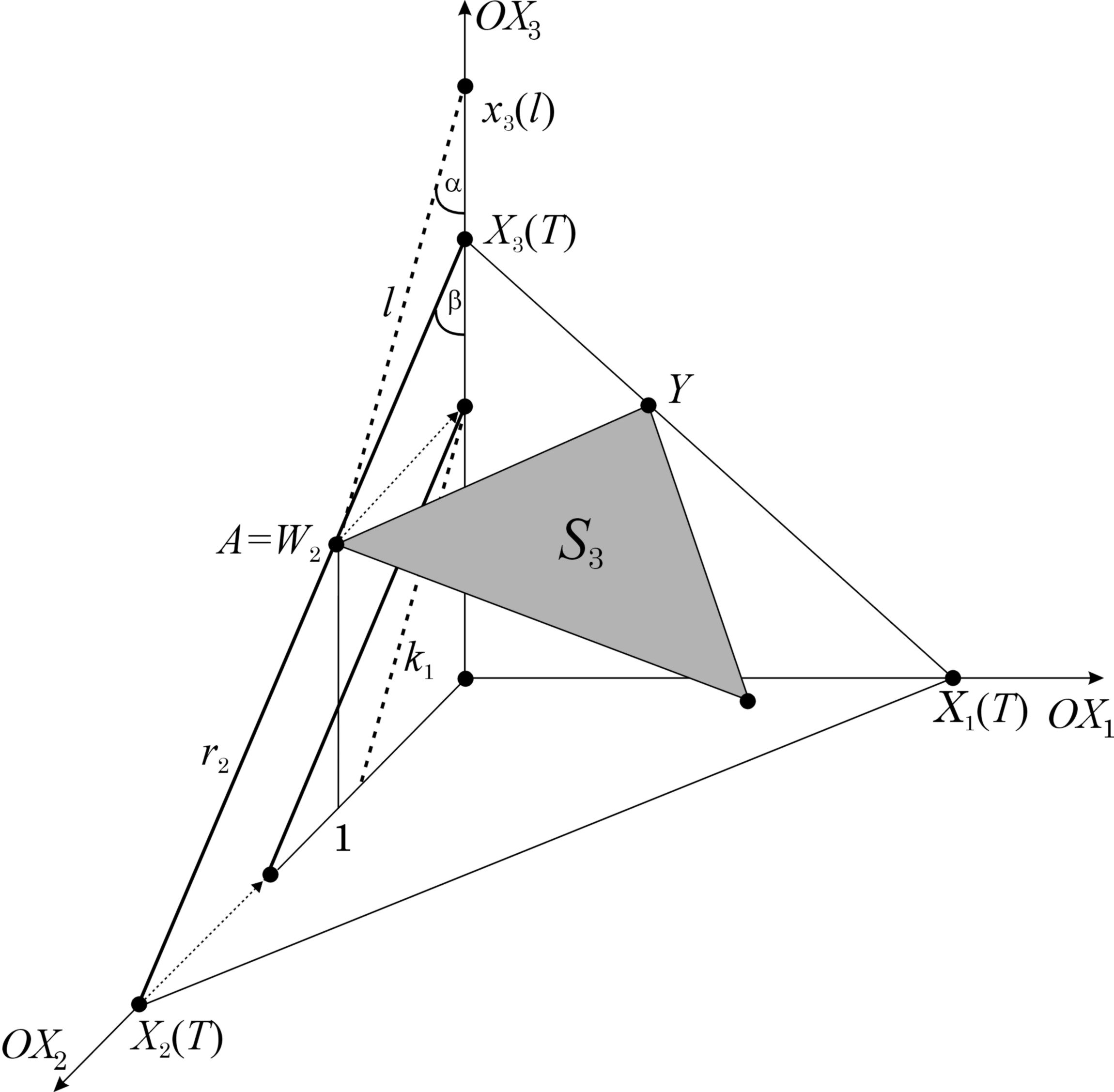} 
  \caption{$j=2$}\label{n57}
  \end{center}
\end{figure}

\par Denote $r_j=T\cap\{x_1=2-j\}$ and let $A$ be the vertex of the edge $\overline{W_jY},$ which lies on the plane $OX_2X_3.$ Let $\alpha$ be the acute angle between the line $l$ and the axis $OX_3$ and $\beta$ the acute angle between the line $T\cap OX_2X_3$ and the axis $OX_3.$ Since $l$ is a supporting line to $\Gamma_+(g),$ then $A$ lies on the line $l$ or above it (in Fig. \ref{n54} and Fig. \ref{n57} $A\in l$). This and (\ref{n53}) shows that $\alpha<\beta.$ Consider now the line $k_j\parallel l$ such that $W-1_j\in k_j.$ Because the set $\supp f$ lies on the plane $T$ or above it, so the set $\supp f'_{z_j}(0,z_2,z_3)$ lies on the line $r_j-1_j$ or above it. Since $\alpha<\beta$ and $k_j\parallel l,$ then $x_2(k_j)<x_2(r_j-1_j)$ and the more $\supp f'_{z_j}(0,z_2,z_3)$ besides the point $(W-1_j)\in k_j,$ lies above the line $k_j.$ Therefore the line $k_j$ supports $\Gamma_+(f'_{z_j}(0,z_2,z_3))$ exactly in one point $W_j-1_j.$ Moreover $m(k_j)\leq m(r_j-1_j).$ Hence and by Property \ref{n4'} we get that
\begin{equation}\label{n58}m(k_j)\leq m(r_j-1_j)\leq m(T-1_j)\leq m(T)-1=m(S_3)-1. \end{equation} 
Summing up by inequality (\ref{n58}) and by Property \ref{n8}b we have that

$$\frac{\ord(\nabla f\circ\phi)}{\ord\phi}\leq \frac{\ord( f'_{z_j}\circ\phi)}{\ord \phi}\leq \frac{\ord( f'_{z_j}(0,\phi_0))}{\ord\phi_0}=m(k_j)\leq m(S_3)-1,$$

\noindent which leads to a contradiction with inequality (\ref{n52}). 
\par Therefore we obtain $$\pounds_0(f)\leq\max_{i=1}^3 m(S_i)-1\leq \max_{S\in\Gamma^2(f)\setminus E_f}m(S)-1,$$ which completes the proof of part $2^0.$ 
\end{proof}

\section{Examples and open problems.}

We now give examples to illustrate the main result of this paper (Thm. \ref{n2}). The first example illustrates part $1^0$ of Theorem \ref{n2}. For the singularity of this example we have $\Gamma^2(f)=E_f$ and $\pounds_0(f)<\max_{S\in E_f}m(S)-1.$  

\begin{example}\label{n40} Let $f(z_1,z_2,z_3):=z_1^3+z_2^3+z_3^2+z_1z_2.$ In this case $\Gamma^2(f)=E_f,$ thus, under part $1^0$ of Theorem \ref{n2} we have that $I_3^2\in\Gamma^1(f)\cap\mathcal{J}$ and $\pounds_0(f)=m(I_3^2)-1=1<2=\max_{S\in E_f}m(S)-1.$   
\end{example} 

The next example also illustrates part $1^0$ of Theorem \ref{n2}. For the singularity in this example we have $\Gamma^2(f)=\emptyset$ and $\Gamma^1(f)=\{I_2^3\}.$

\begin{example}\label{n41} Let $f(z_1,z_2,z_3):=z_2^3+z_1z_3+z_1^2z_3^2.$ In this case $\Gamma^2(f)=\emptyset.$ Thus, under part $1^0$ of Theorem \ref{n2} we have that $I_2^3\in\Gamma^1(f)\cap\mathcal{J}$ and $\pounds_0(f)=m(I_2^3)-1=2.$ 
\end{example} 

The last example illustrates part $2^0$ of Theorem \ref{n2}. It shows that for the singularity in this example, the estimate obtained from this part of the theorem is optimal, i.e. in formula (\ref{n2'}) we have the equality.
\begin{example}\label{n42} Let $f(z_1,z_2,z_3):=z_1^3+z_2^3+z_1z_3^4+z_2z_3^4.$ In this case $\Gamma^2(f)=\{S_1,S_2\},\,S_1=\conv \{(3,0,0),(0,3,0),(1,0,4),(0,1,4)\},\,S_2=\conv \{(1,0,4),(0,1,$ \\ $,4),(0,0,20)\},\,E_f=\{S_2 \}.$ Hence from part $1^0$ of Theorem \ref{n2} we get that 
$\pounds_0(f)\leq m(S_1)-1=5$ (by Fukui theorem \cite{F} we would get $\pounds_0(f)\leq\max(m(S_1),$\\$m(S_2))-1=19$).  In this example one can show that $\pounds_0(f)=5.$   
\end{example}

The above example suggests the following is true.
\begin{conjecture} \label{n44} Let $f:\left( \mathbb {C}^n,0\right)\longrightarrow \left( \mathbb {C},0\right),\,n\geq 2, $ be an isolated and nondegenerate singularity such that $\Gamma^{n-1}(f)\setminus E_f\neq\emptyset.$ Then 
\begin{equation} {\pounds}_0(f)=\max _{S\in\Gamma^{2}(f)\setminus E_f} m(S)-1. \end{equation}
\end{conjecture}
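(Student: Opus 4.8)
The plan is to prove the two inequalities separately. The bound $\pounds_0(f)\le \max_{S\in\Gamma^{n-1}(f)\setminus E_f}m(S)-1$ is, for $n=3$, exactly part $2^0$ of Theorem \ref{n2}; for general $n$ it should follow along the same lines, since Lemma \ref{n6} (and the Properties \ref{n8}, \ref{n4'} it rests on) is already stated in arbitrary dimension, so the only genuinely dimension-dependent ingredients are the combinatorial Lemma \ref{wybor} about the choice of an unexceptional face and the reduction, via Property \ref{w4}, to lower-dimensional restrictions that handles parametrizations with some $\phi_i=0$. Thus I would first isolate and reprove an $n$-dimensional analogue of Lemma \ref{wybor} and then run an induction on $n$. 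The real content of the conjecture is the reverse inequality, and I will concentrate on it.

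For the lower bound I would use property (b) of the \L ojasiewicz exponent (p.~\pageref{b}) and exhibit a single analytic arc realizing the right-hand side. Fix an unexceptional face $S^{*}\in\Gamma^{n-1}(f)\setminus E_f$ with $m(S^{*})=\max_{S}m(S)$, let $u=(u_1,\dots,u_n)$ be its primitive vector and $l:=l(u,\Gamma_+(f))$, and choose a coordinate $i_0$ with $u_{i_0}=\min_i u_i$, so that $m(S^{*})=l/u_{i_0}$. It suffices to produce a parametrization $\phi=(\phi_i)_{i=1}^n$ with $\ord\phi_i=u_i$ (hence $\ord\phi=u_{i_0}$) along which $\ord(f'_{z_{i_0}}\circ\phi)=l-u_{i_0}$ while $\ord(f'_{z_j}\circ\phi)\ge l-u_{i_0}$ for $j\ne i_0$; then property (b) gives $\pounds_0(f)\ge \frac{\ord(\nabla f\circ\phi)}{\ord\phi}=\frac{l-u_{i_0}}{u_{i_0}}=m(S^{*})-1$.

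Such an arc I would look for as a branch through the origin of the ``partial polar'' variety $V:=\{z: f'_{z_j}(z)=0,\ j\ne i_0\}$, which is cut out by $n-1$ equations and hence has dimension $\ge 1$. If a branch $\phi$ of $V$ has leading term $\phi_i\sim a_i t^{u_i}$ with $a=(a_1,\dots,a_n)\in(\mathbb{C}^{*})^n$, then $f'_{z_j}\circ\phi\equiv 0$ for $j\ne i_0$ by construction, so $\ord(\nabla f\circ\phi)=\ord(f'_{z_{i_0}}\circ\phi)$, and by Property \ref{n8} the latter equals $l-u_{i_0}$ as soon as $(f_{S^{*}})'_{z_{i_0}}(a)\ne 0$. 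Everything therefore reduces to solving, in $(\mathbb{C}^{*})^n$, the quasihomogeneous system $(f_{S^{*}})'_{z_j}(a)=0$ $(j\ne i_0)$ with $(f_{S^{*}})'_{z_{i_0}}(a)\ne 0$ --- which, by the Euler relation $\sum_i u_i z_i (f_{S^{*}})'_{z_i}=l\,f_{S^{*}}$, is equivalent to $f_{S^{*}}(a)\ne 0$ --- and then lifting $a$ to a genuine convergent branch of $V$ staying in the torus. Here the hypothesis that $S^{*}$ is unexceptional is exactly what is needed: on an exceptional face some first partial $(f_S)'_{z_j}$ degenerates to a monomial that cannot vanish on $(\mathbb{C}^{*})^n$ (this is precisely what excludes the face $S_2$ of Example \ref{n42}), so the system is unsolvable in the torus; the point of the proof will be to establish the converse, that unexceptionality forces the face system to have a torus solution with $f_{S^{*}}(a)\ne 0$, using in addition the nondegeneracy of $f$ on $S^{*}$ (via Corollary \ref{n12}) to guarantee that $(f_{S^{*}})'_{z_{i_0}}$ is not identically zero on the solution set.

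The hardest part, I expect, is the passage from the leading-order solution $a$ to an actual branch of $V$ with all $\phi_i\ne 0$ and unchanged leading exponents $u_i$: this is a Newton--Puiseux type lifting in which one must control that the higher-order corrections neither force some $\phi_i$ to vanish nor lower the order of $f'_{z_{i_0}}\circ\phi$ below $l-u_{i_0}$, and it is at this step that a quantitative form of nondegeneracy along $S^{*}$ must be exploited. A secondary difficulty is the combinatorics in dimension $n>3$: both the $n$-dimensional analogue of Lemma \ref{wybor} needed for the upper bound and the verification that, for the maximizing unexceptional face, the distinguished coordinate $i_0$ can always be chosen so that the face system is torus-solvable become more intricate. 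I would therefore first settle $n=3$ (where the upper bound is already available from Theorem \ref{n2}) to confirm the lower-bound mechanism before attempting the general induction.
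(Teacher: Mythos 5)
You should first be aware that the statement you attempted is Conjecture \ref{n44} of the paper: it is posed as an open problem (motivated by Example \ref{n42}), and the paper contains no proof of it, so there is nothing to compare your argument against except the partial results the paper does prove. Those are: the inequality ${\pounds}_0(f)\leq\max_{S\in\Gamma^{2}(f)\setminus E_f}m(S)-1$, and only for $n=3$ (part $2^0$ of Theorem \ref{n2}). Your upper-bound step for $n>3$ is therefore not a routine transcription of the paper's proof: the one genuinely dimension-dependent tool, Lemma \ref{wybor}, is established by a case analysis tied to configurations in $\mathbb{R}^3$ (edges lying in at most two $2$-faces, exceptional faces clustered around a coordinate plane), and its $n$-dimensional analogue is itself open --- the paper records the higher-dimensional situation as Problem \ref{n45}.

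The serious gap, however, is in your lower bound, and it is not merely that the two key steps are deferred: the first deferred claim is false as stated. Take $f=z_1^2+z_2^2+z_3^2$. This is an isolated, nondegenerate singularity whose Newton boundary has a unique two-dimensional face $S^{*}=\conv\{(2,0,0),(0,2,0),(0,0,2)\}$; no vertex of $S^{*}$ is at distance $1$ from any axis, so $S^{*}$ is unexceptional, $u=(1,1,1)$, $m(S^{*})=2$. But $(f_{S^{*}})'_{z_j}=2z_j$, so for \emph{every} choice of $i_0$ the system $(f_{S^{*}})'_{z_j}(a)=0$, $j\neq i_0$, has no solution in $(\mathbb{C}^{*})^3$; equivalently, the polar variety $\{f'_{z_j}=0,\ j\neq i_0\}$ is a coordinate axis and has no branch in the torus. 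Nevertheless the conjectured equality holds for this $f$: ${\pounds}_0(f)=1=m(S^{*})-1$, realized for instance by the torus arc $\phi(t)=(t,t,t)$, along which no partial derivative vanishes identically --- all three simply have order $1$. So unexceptionality plus nondegeneracy does \emph{not} force torus-solvability of the face system, and killing $n-1$ partials identically along a torus branch, while sufficient, is in general impossible; the correct mechanism must only arrange $\min_j \ord(f'_{z_j}\circ\phi)=(m(S^{*})-1)\ord\phi$, which can occur with every $f'_{z_j}\circ\phi\neq 0$. On top of this, the Newton--Puiseux lifting you flag (preserving torus membership and initial exponents) remains unproved even in the cases where the face system is torus-solvable, and non-torus arcs would have to be handled through restrictions as in Property \ref{w4}. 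In short: your framework (property (b), the Euler-relation reduction, Properties \ref{n7} and \ref{n8}) is sound, but the program reduces the open conjecture to claims that are partly unproven and, in the crucial case, false.
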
 \rm                                            

Naturally arises also the question how to generalize the part $1^0$ of Theorem \ref{n2} to $n$-dimensional case for $n>3 $.

\begin{problem}\label{n45} Characterize isolated singularities in $n$-variables, $n>3,$ for which $\Gamma^{n-1}(f)=\emptyset$ or $\Gamma^{n-1}(f)=E_f$ and give the formula for the \L ojasiewicz exponent of such singularities.
\end{problem}


\end{document}